\newcounter{custom}
\numberwithin{custom}{section} % Ensure it resets with each new section
\let\oldsubsection\subsection
\renewcommand{\subsection}{\stepcounter{custom}\oldsubsection}
\theoremstyle{plain}
\newtheorem{theorem}[custom]{Theorem}
\newtheorem{lemma}[custom]{Lemma}
\newtheorem{proposition}[custom]{Proposition}
\newtheorem{corollary}[custom]{Corollary}
\newtheorem{remark}[custom]{Remark}
\newtheorem{assumption}[custom]{Assumption}
\newtheorem{example}[custom]{Example}
\renewcommand{\vec}[1]{\geovec{#1}}
\renewcommand{\avg}[1]{\{\kern-1.2mm\{#1\}\kern-1.2mm\}}
\newcommand{\tnorm}[1]{{\left\vert\kern-0.25ex\left\vert\kern-0.25ex\left\vert #1 
    \right\vert\kern-0.25ex\right\vert\kern-0.25ex\right\vert}}
\numberwithin{equation}{section}
\pgfplotsset{width=7cm,compat=1.14}
\newcommand{\ConvergenceTriangle}[4]{%
    % #1: Starting point x-coordinate
    % #2: Starting point y-coordinate
    % #3: Horizontal length
    % #4: Convergence order
    \coordinate (A) at (axis cs:#1, #2); % Diagonal edge
    \coordinate (B) at (axis cs:{#1*#3}, #2); % Horizontal edge
    \coordinate (C) at (axis cs:{#1*#3}, {#2/#3^#4)}); % Vertical edge

    % Draw the triangle
    \draw[thick] (A) -- (B);
    \draw[thick] (A) -- (C);
    \draw[thick] (B) -- (C) node[midway, right] {#4};  % Order label outside triangle
    
    % Place the order label (inside triangle)
    % \node at (axis cs:{#1*#3^0.7}, {#2/(#3^#4)^0.35}) {#4};
}
\definecolor{ptwo}{rgb}{1.0, 0.49, 0.0}
\definecolor{pthree}{rgb}{0.0, 0.5, 0.0}
\definecolor{pfour}{rgb}{0.36, 0.54, 0.66}
\title[]{Optimal control of a kinetic equation}
\author{
  Aaron Pim
}
\author{
  Tristan Pryer
}
\author{
  Alex Trenam
}
\address{
  \thanks{
    Department of Mathematical Sciences,
    University of Bath, Claverton Down, Bath BA2 7AY, UK.
    {\tt{A.R.Pim@bath.ac.uk}, \tt{tmp38@bath.ac.uk}, \tt{amt83@bath.ac.uk}}
}}
\begin{document}
\maketitle
\begin{abstract}
  This work addresses an optimal control problem constrained by a
  degenerate kinetic equation of parabolic-hyperbolic type. Using a
  hypocoercivity framework we establish the well-posedness of the
  problem and demonstrate that the optimal solutions exhibit a
  hypocoercive decay property, ensuring stability and
  robustness. Building on this framework, we develop a finite element
  discretisation that preserves the stability properties of the
  continuous system. The effectiveness and accuracy of the proposed
  method are validated through a series of numerical experiments,
  showcasing its ability to handle challenging PDE-constrained optimal
  control problems.
\end{abstract}

\section{Introduction}

Optimal control problems subject to partial differential equation
(PDE) constraints are fundamental in many scientific and engineering
applications. These problems are found in diverse fields such as fluid
flow \cite{DolgovStoll:2017}, heat conduction
\cite{HarbrechtKunothSimonciniUrban:2023}, structural optimisation
\cite{KolvenbachLassUlbrich:2018}, and radiotherapy
\cite{CoxHattamKyprianouPryer:2023}. In radioprotection and external
beam radiotherapy, for example, optimal control techniques are used to
determine the optimal configuration of radiation beams, maximising the
dose to the target (tumour) while minimising exposure to healthy
tissues
\cite{woolley2015optimal,AshbyChronholmHajnalLukyanovMacKenziePimPryer:2024}. The
accurate and efficient modelling and solution of these problems are
critical to advancements in such fields.

The mathematical framework for optimal control problems involving
elliptic and parabolic PDEs has been well established over the
years. This foundation includes theoretical results for both
distributed and boundary control problems
\cite{BieglerGhattasHeinkenschlossBloemen-Waanders:2003,
  ReesDollarWathen:2010, De-los-Reyes:2015}. Typically these problems
are formulated through first-order optimality systems, which introduce
adjoint states or Lagrange multipliers to express the necessary
conditions for optimality. These conditions then serve as the basis
for various numerical methods, including semismooth Newton methods \cite{Casas:2023},
gradient-based techniques, and methods like the alternating direction
method of multipliers (ADMM) \cite{CiarletMiaraThomas:1989,
  Bertsekas:1999}.
  %Additional complications arise with the presence of nonlinearities, such as in bilinear control problems which feature a product of the control and state variables \cite{Casas:2024bilinear, Casas:2024error}.

The continuous formulation of optimal control problems are often discretised using
numerical methods such as finite element methods
\cite{AllendesFuicaOtarola:2020, BrennerGedickeSung:2018}, spectral
methods \cite{RodenMills-WilliamsPearsonGoddard:2022}, and wavelet
methods \cite{Kunoth:2005}. Among these, finite element methods allow
flexibility in handling complex geometries and boundary conditions,
while spectral and wavelet methods offer advantages in problems with
high smoothness or localised features. The choice of discretisation
method depends on the problem at hand, the nature of the domain, and
the computational resources available.
The analysis of parabolic optimal control problems often proceeds from the stability properties possessed by the constraining parabolic PDE operator. For example, see the energy-regularised space-time discretisations of \cite{Langer:2021, Langer:2024} and the discontinuous-Galerkin-in-time schemes of \cite{Chrysafinos:2014}.

However, the challenges are significantly amplified when the PDE in
question is hyperbolic or degenerate parabolic, such as the Boltzmann
transport, Fokker--Planck--Boltzmann or Kolmogorov equations
\cite{lesaint1974finite, ackroyd1995foundations}. These equations are
often sensitive to initial conditions and lack the stability
properties associated with standard elliptic or parabolic problems,
making classical energy estimates ineffective. For example, even the
1d transport equation
\begin{equation}
  u_t + u_x = f,
\end{equation}
leads to stability estimates that deteriorate as the terminal time $T
\to \infty$. In contrast, problems that include even a moderate
diffusion term, such as
\begin{equation}
  u_t + u_x - \epsilon u_{xx} = f,
\end{equation}
exhibit stable behaviour due to the additional dissipation. These
challenges are amplified when considering a problem written as an
optimal control constrained by such a problem. This necessitates a
more robust framework for analysis and numerical approximation.

For degenerate parabolic problems, such as those described by
Kolmogorov-type equations, classical energy methods fail to provide
accurate stability estimates because the equations exhibit dissipation
in only some of the variables. This motivated the development of the
concept of \emph{hypocoercivity}, first formalised by Villani
\cite{villani2006hypocoercivity}, which provides a framework for
deriving stability and regularity estimates for equations that are
degenerate in certain directions. The hypocoercive framework uses
mixed derivative terms to establish regularity and dissipation even in
variables where diffusion is not explicitly present. For example, in
the context of kinetic equations, H\'erau and Nier \cite{NeirHerau}
showed that for the Fokker-Planck equation, the smallest positive
eigenvalue can be used to derive decay rates for the solution, while
Eckmann and Hairer \cite{EckmannHairer} provided estimates for
H\"ormander-hypoelliptic Cauchy problems.

The numerical treatment of hypocoercive systems is challenging,
particularly when designing methods that respect the stability
properties of the underlying continuous problem. Existing
stabilisation techniques
\cite{Burman:2014,BarrenecheaJohnKnobloch:2024} often involve the
addition of artificial diffusion to hyperbolic or non-coercive
problems, which can introduce numerical artifacts if not done
carefully. In recent years, Porretta and Zuazua
\cite{PorrettaZuazua:2017} developed a finite difference method
compatible with the hypocoercive framework, while Georgoulis
\cite{georgoulis2020hypocoercivitycompatible} introduced finite
element methods designed to respect hypocoercivity properties at the
discrete level.

The key contribution of this paper is the establishment of a
hypocoercive framework for the optimal control problem. This framework
provides amenable stability properties at the continuum level,
ensuring that the optimal control problem retains desirable stability
features despite the inherent challenges posed by the underlying
degenerate PDE constraint. Specifically, we apply the hypocoercive
framework to the Kolmogorov equation and, by constructing non-standard
energy arguments, we obtain stability estimates in line with those
derived for parabolic problems.

Once the hypocoercive framework is established for the optimal control
problem, the natural next step is the discretisation of the
system. For this we extend the finite element techniques developed in
\cite{georgoulis2020hypocoercivitycompatible} with the optimal control
setup. This leads to a finite element method that preserves the
stability properties at the discrete level, providing a robust
numerical framework for solving the optimal control problem. The
method respects the hypocoercive structure of the underlying continuum
problem, ensuring that the numerical solutions maintain the physical
properties of the system.

To demonstrate the effectiveness of our approach, we present a series
of numerical experiments. These experiments confirm that the method is
both stable and efficient, making it a viable solution framework for
optimal control problems involving degenerate PDE constraints.

The rest of the paper is setup as follows: In \S \ref{sec:notation} we
introduce the notation and model Kolmogorov problem. In \S
\ref{sec:hypo} we formally show the Kolmogorov problem is
hypocoercive. In \S \ref{sec:OC} we introduce the optimal control
problem and show the optimal solution also satisfies a hypocoercivity
property. In \S \ref{sec:fem} we recall the finite element method from
\cite{georgoulis2020hypocoercivitycompatible} and extend it to the
optimal control problem. Finally, in \S \ref{sec:numerics} we show
some numerical examples.

\section{Notation and problem setup}
\label{sec:notation}

 Throughout this work we denote the standard Lebesgue spaces by
 $\leb{p}(\omega)$, $1\le p\le \infty$,
 $\omega\subset\mathbb{R}^2$. The $\leb2$ inner product over $\omega$
 is denoted $\ltwop{\cdot}{\cdot}_{\omega}$, where the subscript is
 omitted when $\omega$ is the whole of the considered domain $\Omega$
 (defined below), with corresponding norm
 $\|\cdot\|_{\leb2(\omega)}$. We introduce the Sobolev spaces
 \cite[c.f.]{Evans2010,Renardy2004}
\begin{equation}
  \sobh{m}(\omega)
  := 
  \left\{w\in\leb2(\omega)
  : \D^{\vec\alpha}w\in\leb2(\omega), \text{ for } \abs{\vec\alpha}\leq m\right\},
\end{equation}
which are equipped with norms and semi-norms
\begin{gather}
  \|w\|_{\sobh{m}(\omega)}^2 
  := 
  \sum_{\abs{\vec \alpha}\leq m}\|\D^{\vec \alpha} w\|_{\leb2(\omega)}^2 
  \text{ and }
  \abs{w}_{\sobh{m}(\omega)}^2 
  =
  \sum_{\abs{\vec \alpha} = m}\|\D^{\vec \alpha} w\|_{\leb2(\omega)}^2
\end{gather}
respectively, where $\vec\alpha = \{ \alpha_1,...,\alpha_d\}$ is a
multi-index, $\abs{\vec\alpha} = \sum_{i=1}^d\alpha_i$ and
derivatives $\D^{\vec\alpha}$ are understood in a weak sense.
A zero subscript (e.g. $\sobh1_0(\omega)$) indicates vanishing trace on $\partial\omega$.

We extend the notation to include Bochner spaces for functions defined
on a time interval $I = (0, T]$ and spatial domain $\omega \subset
  \mathbb{R}^2$. For a Banach space $X$, we define the space of, e.g.,
  $\cont{0}$-functions with values in $X$ as
  \begin{equation}
    \cont{0}(I; X)
    :=
    \left\{ w : I \to X \mid w \text{ is continuous and } w(t) \in X \text{ for all } t \in I \right\}.
  \end{equation}
  The corresponding norm is given by
  \begin{equation}
    \|w\|_{\cont{0}(I; X)} 
    := 
    \sup_{t \in I} \|w(t)\|_X.
  \end{equation}
  This naturally extends to other spaces.

The problem domain consists of the position-velocity space $\Omega :=
(-x_{\max}, x_{\max}) \times (-1, 1)$, where $x_{\max} > 0$, and the
time interval $(0, T]$. We partition the position-velocity domain
  boundary $\partial \Omega$ into three components: the inflow
  boundary $\Gamma^-$, the outflow boundary $\Gamma^+$, and the
  parabolic boundary $\Gamma_0$, such that
\begin{align}
    \Gamma^- &:= \{-x_{\max}\} \times [0, 1) \cup \{x_{\max}\} \times (-1, 0], \\
    \Gamma^+ &:= \{-x_{\max}\} \times (-1, 0) \cup \{x_{\max}\} \times (0, 1), \\
    \Gamma_0 &:= (-x_{\max}, x_{\max}) \times \{\pm 1\},
\end{align}
as illustrated in Figure \ref{fig:domain}. We consider Dirichlet
boundary conditions on $\Gamma_0$ and $\Gamma^-$, motivating the
definitions
\begin{equation}
  \Gamma^-_0 := \Gamma^- \cup \Gamma_0, \qquad
  \Gamma^+_0 := \Gamma^+ \cup \Gamma_0.
\end{equation}
\begin{figure}[h!]
  \begin{center}
    \begin{tikzpicture}
    % Define xmax
    \def\xmax{2}

    % Draw the domain
    \draw[thick, ->] (-\xmax - 1, 0) -- (\xmax + 1, 0) node[right] {$x$}; % x-axis
    \draw[thick, ->] (0, -1.5) -- (0, 1.5) node[above] {$v$}; % v-axis

    % Draw the domain box
    \draw[thick] (-\xmax, -1) rectangle (\xmax, 1);

    % Inflow boundary Γ−: {−xmax} × [0, 1) ∪ {xmax} × (−1, 0]
    % Outflow boundary Γ+: {−xmax} × (−1, 0) ∪ {xmax} × (0, 1)

    % Draw inflow boundary (Γ−) with a different line style
    \draw[blue, ultra thick] (-\xmax, 0) -- (-\xmax, 1);  % Left part
    \draw[blue, ultra thick] (\xmax, -1) -- (\xmax, 0);  % Right part

    % Draw outflow boundary (Γ+) with another line style
    \draw[red, dashed, ultra thick] (-\xmax, -1) -- (-\xmax, 0);  % Left part
    \draw[red, dashed, ultra thick] (\xmax, 0) -- (\xmax, 1);    % Right part

    % Labeling the boundaries
    \node[blue] at (-\xmax - 0.3, 0.5) {$\Gamma^-$};
    \node[blue] at (\xmax + 0.3, -0.5) {$\Gamma^-$};
    \node[red] at (-\xmax - 0.3, -0.5) {$\Gamma^+$};
    \node[red] at (\xmax + 0.3, 0.5) {$\Gamma^+$};

    % Draw and label Γ0 boundary (top and bottom) with orange
    \draw[orange, ultra thick] (-\xmax, 1) -- (\xmax, 1); % Top boundary
    \draw[orange, ultra thick] (-\xmax, -1) -- (\xmax, -1); % Bottom boundary
    \node[orange] at (0.3, 1.2) {$\Gamma
    _0$};
    \node[orange] at (-0.3, -1.2) {$\Gamma_0$};

    % Making the distinction between boundaries clearer
    \fill[blue, opacity=0.2] (-\xmax, 0) rectangle (-\xmax - 0.1, 1); % Left Γ−
    \fill[blue, opacity=0.2] (\xmax, -1) rectangle (\xmax + 0.1, 0); % Right Γ−
    \fill[red, opacity=0.2] (-\xmax, -1) rectangle (-\xmax - 0.1, 0); % Left Γ+
    \fill[red, opacity=0.2] (\xmax, 0) rectangle (\xmax + 0.1, 1); % Right Γ+

    % Shading for Γ0
    \fill[orange, opacity=0.2] (-\xmax, 1) rectangle (\xmax, 1.1); % Top Γ0
    \fill[orange, opacity=0.2] (-\xmax, -1) rectangle (\xmax, -1.1); % Bottom Γ0

\end{tikzpicture}    
    \caption{\label{fig:domain} An illustration of the domain $\Omega$ and
      the relevant boundary regions.}
  \end{center}
\end{figure}
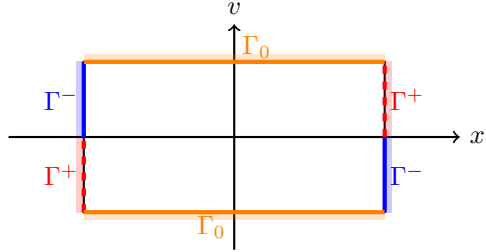

The $\leb2$ inner product with respect to the inflow and outflow
boundaries is defined as
\begin{equation}
  \ltwop{\phi}{\psi}_{\Gamma^{\pm}} := \int_{\Gamma^{\pm}}\phi \psi |v| \, \d s_{x} \d v,
\end{equation}
where $\d s_x$ denotes the surface measure in the $x$ direction, with
the associated norm $\|\cdot\|_{\Gamma^\pm}$.

Let $\epsilon > 0$ and $f$, $u_0$ be given. We then seek $u$ solving the Kolmogorov problem
\begin{align}
    u_t + v u_x - \epsilon u_{vv} &= f & \text{for }& t \in (0, T], (x, v) \in \Omega, \nonumber \\
    u(0, x, v) &= u_0 & \text{for }& (x, v) \in \Omega,  \label{eq:Kolmogorov} \\
    u(t, x, v) &= 0 & \text{for }& t \in (0, T], (x, v) \in \Gamma^-_0. \nonumber
\end{align}

\begin{remark}[Connection to Boltzmann transport]
  The Kolmogorov equation we study here is a prototypical example of
  the class of Boltzmann-Fokker-Planck kinetic equations, which arise
  in the description of charged particle transport. To illustrate
  this, suppose temporarily $X \subset \mathbb{R}^3$ is a closed,
  bounded spatial domain, let $\mathbb{S}^2$ be the unit sphere, and
  let $E = (\mathtt{e}_{\min}, \mathtt{e}_{\max}) \subset (0, \infty)$
  denote the energy domain. The space-angle-energy phase space is
  defined as
  \begin{equation}
    \Upsilon = X \times \mathbb{S}^2 \times E.
  \end{equation}
  For $(\vec{x}, \vec{\omega}, e) \in \Upsilon$, we define the
  outgoing part of the boundary as
  \begin{equation}
    \partial X^- := \{x \in \partial X : \vec{\omega} \cdot \vec{n}_X \leq 0\},
  \end{equation}
  where $\vec{n}_X$ is the outward-pointing normal to $\partial X$.

  The transport equation describing the fluence of charged particles,
  $\psi$, can be written as
  \begin{align}
    \partial_t \psi
    + \overbrace{\vec{\omega} \cdot \nabla_X \psi}  ^{\text{transport}}
    - \overbrace{\nabla_e (\varsigma \psi)} ^{\substack{\text{energy} \\ \text{depletion}}}
      - \overbrace{\epsilon \Delta_{\vec{\omega}} \psi}^{\substack{\text{elastic} \\ \text{scattering}}}
      &= \overbrace{f}^{\substack{\text{source / non-local} \\ \text{scattering}}}
      &&\quad \text{in } (0, T] \times \Upsilon, \nonumber \\
\psi(0, \vec{x}, \vec{\omega}, e) &= \psi_0(\vec{x}, \vec{\omega}, e) &&\quad \text{in } \{0\} \times \Upsilon, \\
\psi &= 0 &&\quad \text{on } (0, T] \times \partial X^-. \nonumber
  \end{align}
  In this form, the structural similarities with the Kolmogorov
  equation can be observed, particularly in the degenerate parabolic
  nature of the problem.
\end{remark}

\section{Hypocoercivity for the Kolmogorov equation}
\label{sec:hypo}

The concept of hypocoercivity is important for understanding the
behaviour of degenerate elliptic or parabolic operators. These
operators, which may initially appear non-dissipative due to the
absence of explicit damping terms in some variables, can exhibit
dissipative behaviour through the construction of non-standard
energies \cite{villani2006hypocoercivity}. 

\subsection{The stationary Kolmogorov operator}
\label{sec:stationary-cont}
We begin by introducing the operator 
\begin{equation}
  \mathcal{K}u := vu_x - \epsilon u_{vv},
\end{equation}
which corresponds to the stationary part of the solution operator in
(\ref{eq:Kolmogorov}). The operator $\mathcal{K}$ is an example of a
degenerate elliptic operator that is hypocoercive. Let
\begin{equation}
  \vec{M} = 
  \begin{bmatrix}
    M_{11} & M_{12} \\
    M_{12} & M_{22}
  \end{bmatrix}
\end{equation}
be a symmetric, non-negative definite matrix with non-negative entries, and let us define, respectively, the minimum and maximum eigenvalue operators $\lambda_-, \lambda_+ : \mathbb{R}^{2\times2} \longrightarrow \mathbb{R}$. 
With $\mathcal{H}:\sobh1(\Omega) \rightarrow \leb2(\Omega, \mathbb{R}^3)$ given by
\begin{equation}
  \mathcal{H}
  :=
  \begin{pmatrix}
    \vec{I} \\ \sqrt{\vec{M}}\nabla
  \end{pmatrix},
\end{equation}
where $\vec{I}$ is the identity matrix, for $w, \phi \in \sobh1(\Omega)$, we introduce the bilinear form
\begin{equation}\label{eq:H-inner-product-definition}
  \ltwop{w}{\phi}_{\mathcal{H}}
  :=
  \ltwop{\mathcal{H}w}{\mathcal{H}\phi}
  =
  \ltwop{w}{\phi} + \ltwop{\nabla w}{\vec{M} \nabla \phi},
\end{equation}
with associated norm or semi-norm (depending on whether $\vec{M}$ is strictly positive definite or indefinite)
\begin{equation}\label{eq:mathcal-H-norm-def}
  \Norm{w}_{\mathcal{H}}^2 := \Norm{w}_{\leb2(\Omega)}^2 + \Norm{\sqrt{\vec{M}} \nabla w}_{\leb2(\Omega, \reals^2)}^2.
\end{equation}

\begin{lemma}[Norm equivalence]
\label{lem:norm-equiv}
    For $w\in\sobh1(\Omega)$, we have
    \begin{equation}
        C^-_{\rm{eq}}\|w\|_{\sobh1(\Omega)}^2
        \leq
        \|w\|_{\mathcal{H}}^2
        \leq
        C^+_{\rm{eq}}\|w\|_{\sobh1(\Omega)}^2,
    \end{equation}
    where $C^-_{\rm{eq}} := \min\qp{1, \lambda_-\qp{\vec{M}}}$ and $C^+_{\rm{eq}} := \max\qp{1, \lambda_+\qp{\vec{M}}}$. Thus if $\vec{M}$ is strictly positive definite, then the $\|\cdot\|_{\mathcal{H}}$ and $\|\cdot\|_{\sobh1(\Omega)}$ norms are equivalent.
\end{lemma}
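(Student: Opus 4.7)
The plan is to reduce the statement to a pointwise quadratic form estimate for the symmetric positive semi-definite matrix $\vec{M}$, and then integrate over $\Omega$. The central identity is that $\|\sqrt{\vec{M}}\nabla w\|_{\leb2(\Omega,\reals^2)}^2 = \int_\Omega (\nabla w)^\top \vec{M}\,(\nabla w)\dx$, which follows from $\sqrt{\vec{M}}$ being symmetric (as the positive semi-definite square root of a symmetric matrix).

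First I would recall the Rayleigh quotient bounds for a symmetric positive semi-definite matrix: for every $\vec\xi\in\reals^2$,
\begin{equation}
  \lambda_-(\vec{M})\abs{\vec\xi}^2 \le \vec\xi^\top \vec{M}\vec\xi \le \lambda_+(\vec{M})\abs{\vec\xi}^2.
\end{equation}
Applying this with $\vec\xi=\nabla w(x,v)$ pointwise, and integrating over $\Omega$, yields
\begin{equation}
  \lambda_-(\vec{M})\Norm{\nabla w}_{\leb2(\Omega,\reals^2)}^2
  \le \Norm{\sqrt{\vec{M}}\nabla w}_{\leb2(\Omega,\reals^2)}^2
  \le \lambda_+(\vec{M})\Norm{\nabla w}_{\leb2(\Omega,\reals^2)}^2.
\end{equation}

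Next I would add $\Norm{w}_{\leb2(\Omega)}^2$ to each side of this double inequality and compare term-by-term with the $\sobh1(\Omega)$-norm. For the upper bound, I bound the $L^2$-part by $1\cdot\Norm{w}_{\leb2(\Omega)}^2$ and the gradient part by $\lambda_+(\vec{M})\Norm{\nabla w}_{\leb2(\Omega,\reals^2)}^2$, then factor out $\max(1,\lambda_+(\vec{M}))=C^+_{\rm eq}$ to recover $C^+_{\rm eq}\|w\|_{\sobh1(\Omega)}^2$. For the lower bound, I factor out $\min(1,\lambda_-(\vec{M}))=C^-_{\rm eq}$ analogously.

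This argument is essentially routine and I do not anticipate any real obstacle; the only subtlety is observing that when $\vec{M}$ is merely positive semi-definite one has $\lambda_-(\vec{M})=0$ so the lower constant degenerates, consistent with $\|\cdot\|_{\mathcal{H}}$ being only a semi-norm in that case — this is exactly the dichotomy flagged in the statement. Finally, the equivalence of norms when $\vec{M}$ is strictly positive definite follows immediately, since then $C^-_{\rm eq}>0$.
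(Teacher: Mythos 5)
Your proposal is correct and follows essentially the same route as the paper: bound $\Norm{\sqrt{\vec{M}}\nabla w}_{\leb2(\Omega,\reals^2)}^2$ between $\lambda_\pm(\vec{M})\Norm{\nabla w}_{\leb2(\Omega,\reals^2)}^2$ via the Rayleigh quotient, then add the $\leb2$ term and factor out $\min(1,\lambda_-(\vec{M}))$ and $\max(1,\lambda_+(\vec{M}))$. The only difference is that you spell out the pointwise quadratic-form identity explicitly, which the paper leaves implicit.
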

\begin{proof}
    We may bound the second term in \eqref{eq:mathcal-H-norm-def} as
    \begin{equation}
    \label{eq:bound-grad-term}
        \lambda_-\qp{\vec{M}}\|\nabla w\|_{\leb2(\Omega)}^2
        \leq
        \|\sqrt{\vec{M}}\nabla w\|_{\leb2(\Omega)}^2
        \leq
        \lambda_+\qp{\vec{M}}\|\nabla w\|_{\leb2(\Omega)}^2.
    \end{equation}
    It then follows that
    \begin{equation}
        \|w\|_{\mathcal{H}}^2
        \leq 
        \|w\|_{\leb2(\Omega)}^2 + \lambda_+\qp{\vec{M}}\|\nabla w\|_{\leb2(\Omega)}^2
        \leq
        \max\qp{1, \lambda_+\qp{\vec{M}}}
        \|w\|_{\sobh1(\Omega)}^2,
    \end{equation}
    and similarly
    \begin{equation}
        \|w\|_{\mathcal{H}}^2
        \geq 
        \|w\|_{\leb2(\Omega)}^2 + \lambda_-\qp{\vec{M}}\|\nabla w\|_{\leb2(\Omega)}^2
        \geq
        \min\qp{1, \lambda_-\qp{\vec{M}}}
        \|w\|_{\sobh1(\Omega)}^2.
    \end{equation}
\end{proof}
We associate to $\mathcal{K}$ the bilinear form $a: \sobh2(\Omega)
\times \sobh2(\Omega) \rightarrow \mathbb{R}$, given by
\begin{equation}
  \label{eq:weakform}
  \begin{split}
    a(w, \phi) :=& \ltwop{vw_x}{\phi}_{\mathcal{H}} + \epsilon \ltwop{w_v}{\phi_v}_{\mathcal{H}} \\
    =& \ltwop{v\mathcal{H}w_x}{\mathcal{H}\phi} + \ltwop{w_x}{M_{12} \phi_x + M_{22} \phi_v} + \epsilon \ltwop{w_v}{\phi_v} + \epsilon \ltwop{\nabla w_v}{\vec{M} \nabla \phi_v},
  \end{split}
\end{equation}
where the latter form will be useful to us later. We may then pose the
problem: given $f\in\sobh1(\Omega)$, seek $u\in\sobh2(\Omega)$, such
that
\begin{equation}
\label{eq:stationary-weak}
a(u, \phi) = \ltwop{f}{\phi}_{\mathcal{H}}\quad\forall\phi\in\sobh2(\Omega),
\end{equation}
subject to appropriate boundary conditions, which we specify in Remark
\ref{rem:boundary-conditions}.  Observe that if $\vec{M} \equiv \vec{0}$,
then \eqref{eq:stationary-weak} is simply a restriction to
$\sobh2(\Omega)$ of the standard weak formulation of the stationary
version of \eqref{eq:Kolmogorov}.

\begin{remark}[Alternative interpretation]
  An alternative interpretation of \eqref{eq:stationary-weak} starts
  from the differential operator itself. A differential consequence of
  \begin{equation}
    \mathcal{K}u - f = 0,
  \end{equation}
  is, for smooth enough $f$ and $u$, given by
  \begin{equation}
    \div\left( \vec{M} \nabla \left( \mathcal{K}u - f \right) \right) = 0.
  \end{equation}
  This leads to the fourth-order PDE
  \begin{equation}
    \label{eq:differential-consequence}
    \mathcal{K}u - f
    -
    \div\left( \vec{M} \nabla \left( \mathcal{K}u - f \right) \right) = 0,
  \end{equation}
  which is the strong form of \eqref{eq:stationary-weak}, with appropriate boundary conditions.
\end{remark}

\begin{remark}[Boundary conditions]
\label{rem:boundary-conditions}
  Since the strong form \eqref{eq:differential-consequence} of the hypocoercive formulation \eqref{eq:stationary-weak} is a fourth-order PDE, additional boundary conditions are required to ensure well-posedness. In this case, we assume
  \begin{equation}
    \mathcal{H}u = \vec{0} \quad \text{on } \Gamma^-_0,
  \end{equation}
  which is equivalent to
  \begin{equation}
    \begin{split}
      u &= u_x = u_v = 0 \quad \text{on } \Gamma^-, \\
      u &= u_x = u_v = 0 \quad \text{on } \Gamma_0.
    \end{split}
  \end{equation}
  It is important to note that these are not the only possible boundary conditions. Periodic boundary conditions on $\Gamma_0$, or natural boundary conditions
  \begin{equation}
    u = M_{11} u_{xv} + M_{12} u_{vv} = 0 \quad \text{on } \Gamma_0,
  \end{equation}
  are also feasible and do not affect the analysis presented.
\end{remark}

In light of Remark \ref{rem:boundary-conditions}, we now show that the bilinear form $a(\cdot, \cdot)$ is coercive over the space
  \begin{equation}
    \sobhsubsc{2}{0,\Gamma_0^-}(\Omega)
    :=
    \left\{\left. w \in \sobh2(\Omega) \ \right| \ \mathcal{H}w = \vec 0 \text{ on } \Gamma_0^- \right\}.
  \end{equation}
  We introduce the matrix
  \begin{equation}
    \vec N :=
      \begin{bmatrix}
        M_{12} & M_{22}/2 \\ M_{22}/2 & \epsilon
      \end{bmatrix},
    \end{equation}
which we will use throughout the remainder of this work.

\begin{lemma}[Coercivity of the bilinear form]
\label{lem:primal-coerc-alt}
  Let $w\in\sobhsubsc{2}{0,\Gamma_0^-}(\Omega)$, and let $4 \epsilon M_{12} > M_{22}^2$, which guarantees $\lambda_-(\vec N) > 0$. Then
  \begin{equation}
    a(w,w)
    \geq
    \frac{1}{2}\Norm{\mathcal{H}w}_{\Gamma^+}^2
      +
      \lambda_-(\vec{N})
      \Norm{\nabla w}_{\leb2(\Omega)}^2
      +
      \epsilon \lambda_-(\vec M)
      \Norm{\nabla w_v}_{\leb2(\Omega)}^2.
  \end{equation}
\end{lemma}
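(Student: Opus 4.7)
The plan is to test with $\phi = w$ in the second expression for $a(\cdot,\cdot)$ from \eqref{eq:weakform} and treat each of the four terms in turn, with the main work being the integration by parts on the convective term.

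First I would rewrite the convective contribution using the identity $\mathcal{H}w_x \cdot \mathcal{H}w = \tfrac12 \partial_x\qp{\Abs{\mathcal{H}w}^2}$, and then integrate by parts in $x$, noting that $v$ is independent of $x$ so no volume term remains. The boundary integral reduces to an integral over $\{\pm x_{\max}\}\times(-1,1)$ weighted by $v$ times the outward normal in $x$. Decomposing this boundary into the inflow/outflow pieces and using $\mathcal{H}w = \vec 0$ on $\Gamma^- \subset \Gamma^-_0$ kills the inflow contributions, leaving only the outflow boundary $\Gamma^+$; the surviving term is precisely $\tfrac12\Norm{\mathcal{H}w}_{\Gamma^+}^2$ by the weighted norm definition. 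This is the step I expect to be the most delicate — the sign bookkeeping across the four corners of the rectangle must be tracked carefully and reconciled with the $\abs{v}$ weight in $\ltwop{\cdot}{\cdot}_{\Gamma^+}$.

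Next I would collect the remaining terms
\begin{equation}
  \ltwop{w_x}{M_{12}w_x + M_{22}w_v} + \epsilon\ltwop{w_v}{w_v}
\end{equation}
into a single quadratic form in $\nabla w = (w_x, w_v)^\top$, which equals $\int_\Omega \nabla w \cdot \vec N \nabla w$. Under the hypothesis $4\epsilon M_{12} > M_{22}^2$ the matrix $\vec N$ has positive determinant, and since $\operatorname{tr}(\vec N) = M_{12} + \epsilon > 0$, both eigenvalues are positive, giving $\lambda_-(\vec N) > 0$ and the lower bound $\lambda_-(\vec N)\Norm{\nabla w}_{\leb2(\Omega)}^2$.

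Finally the last term $\epsilon\ltwop{\nabla w_v}{\vec M \nabla w_v}$ is bounded below pointwise by $\epsilon\lambda_-(\vec M)\Abs{\nabla w_v}^2$, exactly as in the left-hand inequality of \eqref{eq:bound-grad-term} used in Lemma \ref{lem:norm-equiv}. Adding the three nonnegative lower bounds yields the claimed estimate. Beyond the boundary analysis in the first step, the rest is routine algebra on symmetric $2\times 2$ matrices.
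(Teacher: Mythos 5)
Your proposal is correct and follows essentially the same route as the paper's proof: test with $\phi = w$, integrate the convective term by parts to isolate $\tfrac12\Norm{\mathcal{H}w}_{\Gamma^+}^2$ via the boundary condition $\mathcal{H}w = \vec 0$ on $\Gamma_0^-$, collect the cross terms into the quadratic form $\ltwop{\nabla w}{\vec N\nabla w}$, and bound the remaining terms by the minimum eigenvalues of $\vec N$ and $\vec M$. Your explicit determinant-and-trace check that $4\epsilon M_{12} > M_{22}^2$ forces $\lambda_-(\vec N) > 0$ is a detail the paper asserts without proof, but it does not change the argument.
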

\begin{proof}
  From the definition of $a(\cdot, \cdot)$ in (\ref{eq:weakform}), we have
  \begin{equation}
      a(w,w)
      =
      \ltwop{v\mathcal{H}w_x}{\mathcal{H}w}
      +
      M_{12}\Norm{w_x}_{\leb2(\Omega)}^2
      +
      M_{22}\ltwop{w_x}{w_v}
      +
      \epsilon \Norm{w_{v}}_{\leb2(\Omega)}^2
      +
      \epsilon \ltwop{\nabla w_v}{\vec M \nabla w_v},
  \end{equation}
  whereby integrating the first term by parts and applying the boundary conditions gives
  \begin{equation}
  \begin{split}
      a(w, w)
      &=
      \dfrac{1}{2}\Norm{\mathcal{H}w}^2_{\Gamma^+}
      +
      M_{12}\Norm{w_x}_{\leb2(\Omega)}^2
      +
      M_{22}\ltwop{w_x}{w_v}
      +
      \epsilon \Norm{w_{v}}_{\leb2(\Omega)}^2
      +
      \epsilon \ltwop{\nabla w_v}{\vec M \nabla w_v}
      \\
    &=
        \dfrac{1}{2}\Norm{\mathcal{H}w}^2_{\Gamma^+}
        +
        \ltwop{\nabla w}{ \vec N\nabla w}
        +
      \epsilon \ltwop{\nabla w_v}{\vec M \nabla w_v}.
      \end{split}
    \end{equation}
    The result is then a consequence of the positive
    (semi-)definiteness of $\vec{M}$ and the strict positive
    definiteness of $\vec{N}$ under the stated assumptions.
\end{proof}

\begin{lemma}[Continuous dependence on problem data]
\label{lem:stationary-stability}
Let $\vec{M}$ satisfy the conditions of Lemma
\ref{lem:primal-coerc-alt}, let $u\in\sobhsubsc{2}{\Gamma_0^-}(\W)$
solve problem \eqref{eq:stationary-weak}, and let $C_p$ be the
Poincar\'e constant associated with $\Omega$. Then
    \begin{equation}
        \|u\|_{\mathcal{H}} \leq \frac{1}{\delta}\|f\|_{\mathcal{H}},
    \end{equation}
    where $\delta := \qp{C_{\rm{eq}}^+}^{-1}\qp{1 + C_p^2}^{-1}\lambda_-\qp{\vec{N}}$.
\end{lemma}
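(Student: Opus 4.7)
The plan is to test the weak formulation against $u$ itself, lower-bound the left-hand side via the coercivity estimate of Lemma \ref{lem:primal-coerc-alt}, upper-bound the right-hand side by Cauchy--Schwarz in the $\mathcal{H}$-inner product, and then convert the gradient bound on the left into a bound on the full $\mathcal{H}$-norm using Poincar\'e and the norm equivalence of Lemma \ref{lem:norm-equiv}.

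Concretely, first I would substitute $\phi = u$ into \eqref{eq:stationary-weak} to obtain
\begin{equation}
a(u, u) = \ltwop{f}{u}_{\mathcal{H}}.
\end{equation}
For the left-hand side, Lemma \ref{lem:primal-coerc-alt} applies (the boundary conditions from Remark \ref{rem:boundary-conditions} put $u$ in $\sobhsubsc{2}{0,\Gamma_0^-}(\Omega)$), and since the boundary term and the $\nabla u_v$ term are non-negative, we retain only the bulk gradient contribution, yielding $a(u,u) \geq \lambda_-(\vec N)\,\Norm{\nabla u}_{\leb2(\Omega)}^2$. For the right-hand side, the Cauchy--Schwarz inequality associated with the $\mathcal{H}$-inner product \eqref{eq:H-inner-product-definition} gives $\ltwop{f}{u}_{\mathcal{H}} \leq \Norm{f}_{\mathcal{H}}\Norm{u}_{\mathcal{H}}$.

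Next I would convert $\Norm{\nabla u}_{\leb2(\Omega)}^2$ into a lower bound for $\Norm{u}_{\mathcal{H}}^2$. Since $u$ vanishes on $\Gamma_0^-$, the Poincar\'e inequality gives $\Norm{u}_{\leb2(\Omega)}^2 \leq C_p^2 \Norm{\nabla u}_{\leb2(\Omega)}^2$, so
\begin{equation}
\Norm{u}_{\sobh1(\Omega)}^2 \leq (1 + C_p^2)\Norm{\nabla u}_{\leb2(\Omega)}^2.
\end{equation}
Combining with the upper bound in Lemma \ref{lem:norm-equiv} gives $\Norm{u}_{\mathcal{H}}^2 \leq C_{\rm eq}^+(1 + C_p^2)\Norm{\nabla u}_{\leb2(\Omega)}^2$, i.e.\ $\Norm{\nabla u}_{\leb2(\Omega)}^2 \geq \delta\,\Norm{u}_{\mathcal{H}}^2/\lambda_-(\vec N)$ with $\delta$ as stated.

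Chaining the inequalities produces $\delta\,\Norm{u}_{\mathcal{H}}^2 \leq \Norm{f}_{\mathcal{H}}\Norm{u}_{\mathcal{H}}$, and dividing through (trivially if $\Norm{u}_{\mathcal{H}} = 0$) yields the claim. There is no real obstacle here: the only subtlety worth noting is that one must discard the positive boundary and $\nabla u_v$ contributions in the coercivity bound before invoking Poincar\'e, since Poincar\'e lives at the $H^1$ level and ties directly to $\Norm{\nabla u}_{\leb2(\Omega)}$; the discarded terms are not needed for the $\mathcal{H}$-norm estimate.
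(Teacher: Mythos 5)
Your proposal is correct and follows essentially the same route as the paper's proof: test with $\phi = u$, apply the coercivity of Lemma \ref{lem:primal-coerc-alt} (keeping only the $\lambda_-(\vec N)\|\nabla u\|_{\leb2(\Omega)}^2$ term), combine the Poincar\'e inequality with the norm equivalence of Lemma \ref{lem:norm-equiv}, and finish with Cauchy--Schwarz in the $\mathcal{H}$-inner product. The only difference is presentational ordering; the paper chains the same inequalities starting from $\|u\|_{\mathcal{H}}^2$ rather than from the weak form.
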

\begin{proof}
    Combining the Poincar\'e inequality with Lemma \ref{lem:norm-equiv} and using Lemma \ref{lem:primal-coerc-alt} gives
    \begin{align}
        \|u\|_{\mathcal{H}}^2
        &\leq
        C_{\rm{eq}}^+\qp{1 + C_p^2}\|\nabla u\|_{\leb2(\Omega)}^2 \\
        &\leq
        C_{\rm{eq}}^+\qp{1 + C_p^2}\qp{\lambda_-\qp{\vec{N}}}^{-1}a(u, u).
    \end{align}
    Using \eqref{eq:stationary-weak} and an applying the Cauchy-Schwarz inequality then completes the proof.
\end{proof}

\begin{remark}[A specific regularisation]
  Throughout this work we consider a general family of admissible $\vec M$. We now give a specific example, which we utilise in the
  numerical experiments in \S\ref{sec:numerics}. For a given $m>0$, let
  \begin{equation}\label{equation: Specific M choice}
    \vec M := \epsilon \begin{bmatrix} m^3 & m^2 \\ m^2 & m
    \end{bmatrix}.
  \end{equation}
  Then
  \begin{equation}\label{spcific M matrix C+ equiv value}
  \begin{gathered}
    \lambda_+\qp{\vec{M}} = \epsilon m (m^2+1), \quad  \lambda_-\qp{\vec{M}} = 0,
    \quad C_{\rm{eq}}^+ = \max\qp{1, \epsilon m (m^2+1)}, \quad C_{\rm{eq}}^- = 0, \\
    \lambda_{\pm}\qp{\vec{N}} = \frac{\epsilon}{2}\qp{1 + m^2 \pm \sqrt{m^4 + m^2 + 1}}.
    \end{gathered}
  \end{equation}
\end{remark}

\subsection{The time dependent Kolmogorov equation}
\label{sec:time-dep-Kolmogorov}

The hypocoercive weak formulation of the time-dependent Kolmogorov
problem \eqref{eq:Kolmogorov} is stated as follows. For a given time
interval $I := (0,T)$ and function $f \in \cont{}\qp{I;
  \sobh1(\Omega)}$, seek
\begin{equation}
  u \in \left\{\left. w \in \cont{1}\qp{I; \sobhsubsc{2}{0,\Gamma_0^-}(\Omega)}~\right| ~ \Norm{w(t)}_{\mathcal{H}} \in \cont{1}\qp{I} \right\},
\end{equation}
such that
\begin{equation}
  \begin{split}
    \ltwop{u_t}{\phi}_{\mathcal{H}} + a(u, \phi)
    &= \ltwop{f}{\phi}_{\mathcal{H}} \quad \Foreach \phi \in \sobhsubsc{2}{0, \Gamma_0^-}(\Omega) \text{ and a.e. } t \in I, \label{eq:Kolmogorov Hypo} \\
    u(0, x, v) &= u_0(x, v) \quad \text{for } (x, v) \in \Omega.
  \end{split}
\end{equation}
The following result demonstrates that the solution to this problem is
uniformly stable in time.

\begin{proposition}[Time stability]
\label{prop:primal-stability-alt}
Let the conditions of Lemma \ref{lem:primal-coerc-alt} be satisfied,
and let $\delta$ be defined as in Lemma
\ref{lem:stationary-stability}. Suppose $u$ solves
\eqref{eq:Kolmogorov Hypo}.  Then, for $\tau \in \bar{I}$,
  \begin{equation}
    \|u(\tau)\|^2_{\mathcal{H}} \leq \max_{\tau = 0,T} \left\{ \exp\qp{-\delta \tau} \|u_0\|^2_{\mathcal{H}} + \dfrac{1 - \exp\qp{-\delta \tau}}{\delta^2} \|f\|_{\cont{}(\bar{I}; \mathcal{H})}^2\right\}.
  \end{equation}  
\end{proposition}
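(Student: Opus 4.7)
The plan is to derive a differential inequality for $t \mapsto \Norm{u(t)}_{\mathcal{H}}^2$, essentially the energy associated with the $\mathcal{H}$ inner product, and then invoke a Gronwall-type argument. First I would test the weak formulation \eqref{eq:Kolmogorov Hypo} with $\phi = u(t) \in \sobhsubsc{2}{0,\Gamma_0^-}(\Omega)$. Since $\Norm{u(t)}_{\mathcal{H}}$ is assumed to be in $\cont{1}(I)$, the first term rearranges as
\begin{equation}
  \ltwop{u_t}{u}_{\mathcal{H}} = \tfrac{1}{2}\tfrac{\d}{\d t}\Norm{u}_{\mathcal{H}}^2,
\end{equation}
which brings the equation into energy-identity form.

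Next I would turn Lemma \ref{lem:primal-coerc-alt} into a bound of the form $a(u,u) \geq \delta \Norm{u}_{\mathcal{H}}^2$ with the same $\delta$ appearing in Lemma \ref{lem:stationary-stability}. This is achieved by keeping only the $\lambda_-(\vec N)\Norm{\nabla u}_{\leb2(\Omega)}^2$ contribution from the coercivity estimate (discarding the non-negative boundary and $\vec M$-weighted terms) and then chaining the Poincar\'e inequality with Lemma \ref{lem:norm-equiv} exactly as done in the proof of Lemma \ref{lem:stationary-stability}. For the right-hand side I would apply Cauchy--Schwarz in the $\mathcal{H}$ inner product followed by Young's inequality with parameter $\delta$, namely
\begin{equation}
  \ltwop{f}{u}_{\mathcal{H}} \leq \Norm{f}_{\mathcal{H}}\Norm{u}_{\mathcal{H}} \leq \tfrac{1}{2\delta}\Norm{f}_{\mathcal{H}}^2 + \tfrac{\delta}{2}\Norm{u}_{\mathcal{H}}^2,
\end{equation}
so that the undesirable $\Norm{u}_{\mathcal{H}}^2$ term is absorbed into the left-hand side, leaving
\begin{equation}
  \tfrac{\d}{\d t}\Norm{u}_{\mathcal{H}}^2 + \delta \Norm{u}_{\mathcal{H}}^2 \leq \tfrac{1}{\delta}\Norm{f}_{\mathcal{H}}^2.
\end{equation}

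I would then apply Gronwall's lemma (via multiplication by the integrating factor $e^{\delta t}$ and integration from $0$ to $\tau$), and bound $\Norm{f(s)}_{\mathcal{H}}^2 \leq \Norm{f}_{\cont{}(\bar{I};\mathcal{H})}^2$ pointwise in $s$, which yields
\begin{equation}
  \Norm{u(\tau)}_{\mathcal{H}}^2 \leq e^{-\delta \tau}\Norm{u_0}_{\mathcal{H}}^2 + \tfrac{1-e^{-\delta \tau}}{\delta^2}\Norm{f}_{\cont{}(\bar{I};\mathcal{H})}^2
\end{equation}
for each $\tau \in \bar I$. Finally, since the right-hand side is monotone in $\tau$ (its derivative in $\tau$ has a fixed sign determined by whether $\Norm{f}_{\cont{}(\bar{I};\mathcal{H})}^2/\delta$ exceeds $\delta \Norm{u_0}_{\mathcal{H}}^2$), its supremum over $[0,T]$ is attained at one of the endpoints $\tau = 0$ or $\tau = T$, giving the stated $\max$. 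The main obstacle I anticipate is the step of reducing Lemma \ref{lem:primal-coerc-alt} to an estimate in the full $\mathcal{H}$ norm: the coercivity is only in $\|\nabla \cdot\|_{\leb2(\Omega)}^2$, so recovering the $\leb2$ and $\sqrt{\vec M}\nabla$ parts of $\Norm{u}_{\mathcal{H}}^2$ relies essentially on Poincar\'e through the full Dirichlet-type boundary condition on $\Gamma_0^-$, and getting the constant $\delta$ to match Lemma \ref{lem:stationary-stability} requires being careful with the order of the Poincar\'e and norm-equivalence inequalities.
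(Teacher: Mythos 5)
Your proposal is correct and follows essentially the same route as the paper: testing with $\phi = u$, converting the coercivity of Lemma \ref{lem:primal-coerc-alt} into an $\mathcal{H}$-norm bound via Poincar\'e and Lemma \ref{lem:norm-equiv} to recover the constant $\delta$, absorbing the forcing term with Cauchy--Schwarz and Young, applying Gr\"onwall, bounding $f$ by its Bochner norm, and finally noting the monotonicity of the right-hand side in $\tau$ to locate the maximum at an endpoint. No gaps.
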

\begin{proof}
  Let $\phi = u$ in \eqref{eq:Kolmogorov Hypo} and apply Lemma
  \ref{lem:primal-coerc-alt} to find
  \begin{align}
    \dfrac{1}{2} \dfrac{\d}{\d t} \|u\|^2_{\mathcal{H}}
    \leq
    \ltwop{f}{u}_{\mathcal{H}}
    -
    \lambda_-\qp{\vec{N}}\|\nabla u\|_{\leb2(\Omega)}^2,
  \end{align}
  whereby combining the Poincar\'e inequality with Lemma
  \ref{lem:norm-equiv} gives
  \begin{equation}
    \dfrac{1}{2} \dfrac{\d}{\d t} \|u\|^2_{\mathcal{H}}
    \leq
    \ltwop{f}{u}_{\mathcal{H}}
    -
    \lambda_-\qp{\vec{N}}\qp{C_p^2 + 1}^{-1}\qp{C_{\rm{eq}}^+}^{-1}\|u\|_{\mathcal{H}}^2.
  \end{equation}
  Recalling the definition of $\delta$ from Lemma \ref{lem:stationary-stability},
  we apply the Cauchy-Schwarz and Young inequalities to obtain
  \begin{equation}
  \label{eq:primal-delta-bound-ineq}
  \dfrac{1}{2} \dfrac{\d}{\d t} \|u\|^2_{\mathcal{H}}
  +
  \dfrac{\delta}{2} \|u\|^2_{\mathcal{H}}.
  \leq
  \dfrac{1}{2\delta} \|f\|^2_{\mathcal{H}}
  \end{equation}
  For arbitrary $\tau \in \bar{I}$, integrating and using Gr\"onwall's
  inequality yields
  \begin{equation}
    \|u(\tau)\|^2_{\mathcal{H}} \leq \exp\qp{-\delta \tau} \|u_0\|^2_{\mathcal{H}} + \dfrac{1}{\delta} \int_0^\tau \exp\qp{-\delta(\tau - s)} \|f(s)\|^2_{\mathcal{H}} \d s.
  \end{equation}
  To derive a uniform estimate, we bound $\|f(s)\|_{\mathcal{H}}$ from above by the Bochner norm $\|f\|_{\cont{}(\bar{I}; \mathcal{H})}$ to obtain
  \begin{equation}
    \|u(\tau)\|^2_{\mathcal{H}} \leq \exp\qp{-\delta \tau} \|u_0\|^2_{\mathcal{H}} + \dfrac{1 - \exp\qp{-\delta \tau}}{\delta^2} \|f\|_{\cont{}(\bar{I}; \mathcal{H})}^2.
  \end{equation}
  If $\delta^{-2} \|f\|_{\cont{}(\bar{I}; \mathcal{H})}^2 > \|u_0\|^2_{\mathcal{H}}$, then the right-hand side is monotonically decreasing with respect to $\tau$ and is maximised at $\tau = 0$. Similarly, if $\delta^{-2} \|f\|_{\cont{}(\bar{I}; \mathcal{H})}^2 < \|u_0\|^2_{\mathcal{H}}$, then the right-hand side is monotonically increasing and maximised at $\tau = T$.
\end{proof}

\begin{remark}[Comparing against a vanilla energy bound]
A non-hypocoercive weak formulation of \eqref{eq:Kolmogorov} reads: given $f\in\leb2(\Omega)$, seek $u \in \cont{1}(I;\sobh1(\Omega))$ such that
  \begin{equation}
  \label{eq:non-hypo-weak}
    \ltwop{u_t}{\phi}
    +
    \ltwop{vu_x}{\phi}
    +
    \epsilon \ltwop{u_v}{\phi_v} = \ltwop{f}{\phi}
    \Foreach \phi \in \sobh1(\Omega),
  \end{equation}
  subject to appropriate boundary conditions.
  Applying a standard energy argument to \eqref{eq:non-hypo-weak} 
  yields, for $\delta^* > 0$, the estimate
  \begin{equation}
    \dfrac{\d}{\d t} \|u\|_{\leb2(\Omega)}^2 + \|u\|^2_{\leb{2}(\Gamma^+)} + 2\epsilon \|u_v\|_{\leb2(\Omega)}^2 - \delta^* \|u\|_{\leb2(\Omega)}^2
    \leq
    \dfrac{1}{\delta^*} \|f\|_{\leb2(\Omega)}^2,
  \end{equation}
  and it then follows that
  \begin{equation}
    \dfrac{\d}{\d t} \|u\|_{\leb2(\Omega)}^2 - \delta^* \|u\|_{\leb2(\Omega)}^2 \leq \dfrac{1}{\delta^*} \|f\|_{\leb2(\Omega)}^2.
  \end{equation}
  Applying Gr\"onwall's inequality gives the pessimistic bound
  \begin{equation}
  \label{eq:pessimistic-bound}
    \|u(\tau)\|_{\leb2(\Omega)}^2 \leq \exp\qp{\delta^* \tau} \|u_0\|_{\leb2(\Omega)}^2 + \dfrac{1}{\delta^*} \int_0^\tau \exp\qp{\delta^*(\tau - t)} \|f(t)\|_{\leb2(\Omega)}^2 \, \d t,
  \end{equation}
  which is similar to the pure convection case.  In contrast to the
  hypocoercive stability demonstrated in Proposition
  \ref{prop:primal-stability-alt}, the non-hypocoercive bound in
  \eqref{eq:pessimistic-bound} offers no control as the terminal time
  $T \rightarrow \infty$. It is this stability we wish to exploit in
  the design of stable numerical methods for the constrained optimal
  control problem.
\end{remark}

\begin{remark}[An explicit Poincar\'e constant]
  The definition of $\delta$ in Lemma \ref{lem:stationary-stability} involves the Poincar\'e constant, which is generally unknown. However, it can be bounded by the diameter of the domain \cite{Optimal_Poincare_bound}. For a two-dimensional domain, we have
  \begin{equation}
    C_p \leq \dfrac{\text{diam}(\Omega)}{\pi} = \dfrac{2\sqrt{1 + x_{\max}^2}}{\pi} \qquad \implies \qquad \delta \geq \Tilde{\delta} :=  \qp{C_{\rm{eq}}^+}^{-1}\qp{4\pi^{-2}(1 + x_{\max}^2) + 1}^{-1}\lambda_-\qp{\vec{N}}.
  \end{equation}
  In practical implementation the value $\Tilde{\delta}$ can be used in place of $\delta$, since from \eqref{eq:primal-delta-bound-ineq}, we have
  \begin{equation}
    \dfrac{\d}{\d t}\|u\|^2_{\mathcal{H}} + \Tilde{\delta} \|u\|^2_{\mathcal{H}}
    \leq
    \dfrac{\d}{\d t} \|u\|^2_{\mathcal{H}} + \delta \|u\|^2_{\mathcal{H}}
    \leq \dfrac{1}{\delta} \|f\|^2_{\mathcal{H}} \leq \dfrac{1}{\Tilde{\delta}} \|f\|^2_{\mathcal{H}}.
  \end{equation}
\end{remark}

\section{Optimal control}
\label{sec:OC}

In this section we introduce an optimal control problem where the Kolmogorov equation serves as a constraint. By using the hypocoercive formulation \eqref{eq:Kolmogorov Hypo} developed in the previous section, we are able to show the problem is stable. Given a target function $\mathcal{D}$, we aim to find the optimal forcing term $f$ such that the solution $u$ to \eqref{eq:Kolmogorov Hypo} is ``close'' to $\mathcal{D}$.

\subsection{Stationary control}

We begin by considering the steady-state problem. For $\alpha > 0$ and $\mathcal{D} \in \sobh1(\Omega)$, define the functional $E:\sobhsubsc{2}{0,\Gamma_0^-}(\Omega) \times \sobh1(\Omega) \rightarrow [0, \infty)$ as
\begin{equation}\label{stationary hypocoercive cost functional}
  E(u, f)
  :=
  \dfrac{1}{2} \|u - \mathcal{D}\|^2_{\mathcal{H}}
  +
  \dfrac{\alpha}{2} \|f\|^2_{\mathcal{H}}.
\end{equation}
We seek a function pair $\qp{u^*, f^*} \in \sobhsubsc{2}{0,\Gamma_0^-}(\Omega)
\times \sobh1(\Omega)$ that minimises this functional, i.e.,
\begin{equation}\label{stationary forward 1}
  \left( u^*, f^* \right)
  =
  \argmin_{\qp{u, f} \in \sobhsubsc{2}{0,\Gamma_0^-}(\Omega) \times \sobh1(\Omega)}
  E(u, f),
\end{equation}
subject to the constraint that $(u, f)$ satisfies the stationary Kolmogorov equation
\begin{equation}\label{eq:stationary forward 2}
  a(u, \phi)
  =
  \ltwop{f}{\phi}_{\mathcal{H}} \qquad \forall
  \phi \in \sobhsubsc{2}{0, \Gamma_0^-}(\Omega).
\end{equation}

Defining the adjoint bilinear form
$a^*:\sobhsubsc{2}{0,\Gamma_0^+}(\Omega)\times
\sobhsubsc{2}{0,\Gamma_0^+}(\Omega) \rightarrow \mathbb{R}$ by
  \begin{equation}
  \label{eq:adjoint-def}
  \begin{split}
    a^*(z, \psi)
    &:=
    \ltwop{v\psi_x}{z}_{\mathcal{H}}
    +
    \epsilon\ltwop{\psi_v}{z_v}_{\mathcal{H}} \\
    &=
    -
    \ltwop{v \mathcal{H} z_x}{\mathcal{H} \psi}
    + \ltwop{M_{12} z_x + M_{22} z_v}{\psi_x}
    +
    \epsilon \ltwop{z_v}{\psi_v}
    +
    \epsilon \ltwop{\nabla z_v}{\vec{M}\nabla\psi_v},
    \end{split}
  \end{equation}
the following result details the first-order necessary conditions for a pair $\qp{u, f}$ to satisfy the optimal control problem \eqref{stationary forward 1}--\eqref{eq:stationary forward 2}.
  
\begin{lemma}[First-order necessary conditions]
  \label{Derivation of the optimal control}
  Suppose $\qp{u, z, f} \in \sobhsubsc{2}{0, \Gamma_0^-}(\Omega) \times \sobhsubsc{2}{0, \Gamma_0^+}(\Omega) \times \sobh1(\Omega)$ solves the following system:
  \begin{equation}
    \label{eq: Stationary KKT}
    \begin{split}
      a(u, \phi)
      &=
      \ltwop{f}{\phi}_{\mathcal{H}}, \\
      a^*(z, \psi)
      &=
      \ltwop{\mathcal{D} - u}{\psi}_{\mathcal{H}}, \\
      \ltwop{\alpha f - z}{\xi}_{\mathcal{H}} &= 0
      \Foreach \qp{\phi, \psi, \xi} \in \sobhsubsc{2}{0, \Gamma_0^-}(\Omega) \times \sobhsubsc{2}{0, \Gamma_0^+}(\Omega) \times \sobh1(\Omega).
    \end{split}
  \end{equation}
  Then $\qp{u, f}$ is the unique solution to the optimal control problem \eqref{stationary forward 1}--\eqref{eq:stationary forward 2}.
\end{lemma}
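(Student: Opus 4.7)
The approach is a standard Lagrange-multiplier argument for a convex quadratic programme. Since $E$ is strictly convex in $(u,f)$ (a sum of two squared norms) and the constraint is affine, a minimiser is unique whenever it exists and the KKT stationarity conditions are both necessary and sufficient for optimality. The plan is therefore to construct the Lagrangian, read off its stationarity conditions, match them to \eqref{eq: Stationary KKT}, and then verify optimality of any KKT point by a direct comparison.

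Specifically, I would form the Lagrangian
\begin{equation*}
  L(u, f; z) := E(u, f) + a(u, z) - \ltwop{f}{z}_{\mathcal{H}},
\end{equation*}
with multiplier $z$, and take its three partial Gateaux derivatives. Variation in $z$ returns the primal equation; variation in $f$ gives $\ltwop{\alpha f - z}{\xi}_{\mathcal{H}} = 0$ for $\xi \in \sobh1(\Omega)$; variation in $u$ gives $a(\phi, z) = \ltwop{\mathcal{D} - u}{\phi}_{\mathcal{H}}$ for $\phi \in \sobhsubsc{2}{0,\Gamma_0^-}(\Omega)$. Since $a^*(z, \psi) = a(\psi, z)$ is immediate from the definitions in \eqref{eq:adjoint-def} and \eqref{eq:weakform}, this last equation is the stated adjoint equation up to a change of test space. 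To pin down the correct space for $z$ I would pass to the strong form: requiring the weak identity to hold for all $\phi$ with $\mathcal{H}\phi = \vec 0$ only on $\Gamma_0^-$ forces a natural boundary condition $\mathcal{H}z = \vec 0$ on $\Gamma_0^+$, so $z \in \sobhsubsc{2}{0,\Gamma_0^+}(\Omega)$ and the stated adjoint weak formulation is recovered.

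For sufficiency, let $(\tilde u, \tilde f)$ be any admissible pair, set $\phi := \tilde u - u$ and $\xi := \tilde f - f$, and expand
\begin{equation*}
  E(\tilde u, \tilde f) - E(u, f) = \ltwop{u - \mathcal{D}}{\phi}_{\mathcal{H}} + \alpha \ltwop{f}{\xi}_{\mathcal{H}} + \tfrac12 \Norm{\phi}_{\mathcal{H}}^2 + \tfrac{\alpha}{2}\Norm{\xi}_{\mathcal{H}}^2.
\end{equation*}
The adjoint applied to $\phi$ yields $\ltwop{u-\mathcal{D}}{\phi}_{\mathcal{H}} = -a(\phi, z)$. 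Subtracting the primal equations for $u$ and $\tilde u$ and testing against $z$ yields $a(\phi, z) = \ltwop{\xi}{z}_{\mathcal{H}}$ (this uses that $z$ is admissible as a test function, which is justified by the natural-BC argument above). The optimality relation then converts $\ltwop{\xi}{z}_{\mathcal{H}}$ into $\alpha \ltwop{f}{\xi}_{\mathcal{H}}$, so the two linear cross-terms cancel exactly, leaving
\begin{equation*}
  E(\tilde u, \tilde f) - E(u, f) = \tfrac12 \Norm{\phi}_{\mathcal{H}}^2 + \tfrac{\alpha}{2}\Norm{\xi}_{\mathcal{H}}^2 \geq 0,
\end{equation*}
with equality only when $\phi = 0$ and $\xi = 0$, since $\Norm{\cdot}_{\mathcal{H}}$ dominates $\Norm{\cdot}_{\leb2(\Omega)}$.

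The main obstacle is the test-space mismatch between the primal formulation (posed on $\sobhsubsc{2}{0,\Gamma_0^-}(\Omega)$) and the adjoint (posed on $\sobhsubsc{2}{0,\Gamma_0^+}(\Omega)$); collapsing the cross-terms above requires extending each weak identity to the complementary test space, which is achieved through the strong-form PDE together with the natural boundary conditions on the opposite inflow/outflow portion of $\partial\Omega$. Once that bookkeeping is done, the remainder of the argument is the routine strictly-convex-programming manipulation.
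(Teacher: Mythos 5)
Your proposal is correct and follows essentially the same route as the paper: the same Lagrangian $L(u,f,z) = E(u,f) + a(u,z) - \ltwop{f}{z}_{\mathcal{H}}$, the same three first variations recovering \eqref{eq: Stationary KKT}, and the same convexity-of-$E$ plus linearity-of-constraint argument for uniqueness. You in fact go further than the paper in two places --- the explicit completion-of-the-square verification that a KKT point is the global minimiser, and the handling of the $\sobhsubsc{2}{0,\Gamma_0^-}(\Omega)$ versus $\sobhsubsc{2}{0,\Gamma_0^+}(\Omega)$ test-space mismatch via the natural boundary conditions --- both of which the paper's one-line justification leaves implicit.
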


\begin{proof}
  The system of equations \eqref{eq: Stationary KKT} corresponds to the Karush-Kuhn-Tucker (KKT) conditions of the Lagrangian
  \begin{equation}\label{Lagrangian 2}
    \begin{split}
      L(u, f, z)
      &:=
      E(u, f)
      +
      \ltwop{vu_x}{z}_{\mathcal{H}} + \epsilon \ltwop{u_v}{z_v}_{\mathcal{H}} - \ltwop{f}{z}_{\mathcal{H}} \\
      &=
      E(u, f)
      - \ltwop{v \mathcal{H} z_x}{\mathcal{H} u}
      + \ltwop{M_{12} z_x + M_{22} z_v}{u_x}
      + \epsilon \ltwop{z_v}{u_v}_{\mathcal{H}} 
      - \ltwop{f}{z}_{\mathcal{H}}.
    \end{split}
  \end{equation}
  The first-order variations of $L$ yield the following conditions:
  \begin{align}
    \dfrac{\delta L}{\delta z} &= a(u, \phi) - \ltwop{f}{\phi}_{\mathcal{H}}, \label{eq: Primal Lagrangian} \\
    \dfrac{\delta L}{\delta u} &= \ltwop{u - \mathcal{D}}{\psi}_{\mathcal{H}} + a^*(z, \psi), \label{eq: Dual Lagrangian} \\
    \dfrac{\delta L}{\delta f} &= \ltwop{\alpha f - z}{\xi}_{\mathcal{H}}. \label{eq: Control Lagrangian}
  \end{align}
  The function spaces involved are closed, meaning that the extrema of the Lagrangian occur at the kernel of the first-order Lagrangian derivatives. Since the cost functional $E(u, f)$ is quadratic and the constraints are linear, the extrema correspond to the unique minima, concluding the proof.
\end{proof}

\begin{remark}[Optimisation then hypocoercification vs. hypocoercification then optimisation]
  In general, the adjoint of the hypocoercive primal equation does not
  coincide with the hypocoercive formulation of the adjoint for the
  original (non-hypocoercive) primal equation. This difference is
  captured (weakly) by the final term in equation
  \eqref{eq:adjoint-def}. Consequently, there is a choice in how to
  approach the optimal control problem, much like the distinction
  between discretising an optimal control problem after its derivation
  or deriving the problem from a discretisation
  (optimise-then-discretise vs. discretise-then-optimise). In this
  work, we choose to ``hypocoerce'' and then optimise (followed by
  discretising), though other approaches are valid.
\end{remark}

The stability of the optimal control problem \eqref{eq: Stationary KKT} relies on the coercivity of $a(\cdot, \cdot)$ over $\sobh2_{0, \Gamma_0^-}(\Omega)$ and $a^*(\cdot, \cdot)$ over the space
\begin{equation}
    \sobh2_{0, \Gamma_0^+}(\Omega)
    :=
    \left\{\left. w \in \sobh2(\Omega) \ \right| \ \mathcal{H}w = \vec 0 \text{ on } \Gamma_0^+ \right\}.
\end{equation}
The former was demonstrated in Lemma \ref{lem:primal-coerc-alt}. The latter is shown in the following.
\begin{lemma}[Coercivity of the adjoint bilinear form]
\label{lem:dual-coerc-alt}
  Let $z\in\sobh2_{0, \Gamma_0^+}(\Omega)$. Assume that the matrix $\vec{M}$ satisfies the same conditions as in Lemma \ref{lem:primal-coerc-alt}. Then
  \begin{equation}
    a^*(z, z) \geq \frac{1}{2}\|\mathcal{H}z\|_{\Gamma^-}^2 + \lambda_-\qp{\vec{N}}\|\nabla z\|_{\leb2(\Omega)}^2 + \epsilon\lambda_-\qp{\vec{M}}\|\nabla z_v\|_{\leb2(\Omega)}^2.
  \end{equation}
\end{lemma}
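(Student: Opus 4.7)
The plan is to mirror the proof of Lemma \ref{lem:primal-coerc-alt}, exploiting that the second (expanded) form of the adjoint bilinear form in \eqref{eq:adjoint-def} differs from $a(z,z)$ only in the sign of the convective term $\ltwop{v\mathcal{H}z_x}{\mathcal{H}z}$. Setting $\psi = z$ in \eqref{eq:adjoint-def} gives
\begin{equation*}
a^*(z,z) = -\ltwop{v\mathcal{H}z_x}{\mathcal{H}z} + M_{12}\Norm{z_x}_{\leb2(\Omega)}^2 + M_{22}\ltwop{z_x}{z_v} + \epsilon\Norm{z_v}_{\leb2(\Omega)}^2 + \epsilon\ltwop{\nabla z_v}{\vec M \nabla z_v},
\end{equation*}
so the structure is identical to that in Lemma \ref{lem:primal-coerc-alt} apart from the sign on the convective piece.

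Next, I would integrate the convective term by parts in $x$, rewriting $-v\mathcal{H}z_x \cdot \mathcal{H}z = -\tfrac12 v\,\partial_x|\mathcal{H}z|^2$, which produces a boundary contribution of the form $-\tfrac12\int_{\partial\Omega} v\,n_x\,|\mathcal{H}z|^2\,\mathrm{d}s$. Carefully tracking the outward normal on the four boundary pieces gives $v n_x = |v|$ on $\Gamma^+$, $v n_x = -|v|$ on $\Gamma^-$, and $n_x = 0$ on $\Gamma_0$. Since here $z \in \sobh2_{0,\Gamma_0^+}(\Omega)$, we have $\mathcal{H}z = \vec 0$ on $\Gamma^+ \cup \Gamma_0$, so the $\Gamma^+$ and $\Gamma_0$ contributions vanish and the $\Gamma^-$ integral survives with a favourable sign, yielding exactly $+\tfrac12\Norm{\mathcal{H}z}^2_{\Gamma^-}$.

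From this point the calculation is essentially a transcription of the primal proof: the remaining bulk terms regroup as
\begin{equation*}
M_{12}\Norm{z_x}_{\leb2(\Omega)}^2 + M_{22}\ltwop{z_x}{z_v} + \epsilon\Norm{z_v}_{\leb2(\Omega)}^2 = \ltwop{\nabla z}{\vec N \nabla z},
\end{equation*}
and invoking strict positive definiteness of $\vec N$ (guaranteed by $4\epsilon M_{12} > M_{22}^2$) together with positive semi-definiteness of $\vec M$ closes the estimate.

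The main obstacle is not analytical but bookkeeping: one must pin down the sign of $v n_x$ on each of the four components of $\partial\Omega$ (recalling that $\Gamma^+$ and $\Gamma^-$ each straddle both walls $x = \pm x_{\max}$) so that the cancellation on $\Gamma^+$ and the positive contribution on $\Gamma^-$ emerge correctly. Once the sign accounting is done, the argument is a verbatim adaptation of Lemma \ref{lem:primal-coerc-alt} with $\Gamma^+$ replaced by $\Gamma^-$.
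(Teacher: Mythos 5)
Your proposal is correct and follows essentially the same route as the paper: set $\psi = z$ in \eqref{eq:adjoint-def}, integrate the convective term by parts so that the boundary conditions on $\Gamma_0^+$ leave the favourable $\tfrac12\|\mathcal{H}z\|^2_{\Gamma^-}$ contribution, regroup the remaining bulk terms as $\ltwop{\nabla z}{\vec N\nabla z}$, and conclude from the definiteness of $\vec N$ and $\vec M$. Your explicit sign bookkeeping of $v\,n_x$ on each boundary component is the detail the paper leaves implicit, and it checks out.
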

\begin{proof}
From the definition of $a^*(\cdot, \cdot)$ in \eqref{eq:adjoint-def}, we have
  \begin{equation}
    a^*(z, z) = 
    -\ltwop{v \mathcal{H}z_x}{\mathcal{H}z}
    + M_{12}\|z_x\|_{\leb2(\Omega)}^2
    + M_{22}\ltwop{z_x}{z_v}
    + \epsilon \|z_v\|_{\leb2(\Omega)}^2
    + \ltwop{\nabla z_v}{\vec{M}\nabla z_v}.
  \end{equation}
  Integrating the first term by parts and applying the boundary conditions, we obtain
  \begin{align}
    a^*(z, z) &= \dfrac{1}{2}\|\mathcal{H}z\|^2_{\Gamma^-}
    + M_{12}\|z_x\|_{\leb2(\Omega)}^2
    + M_{22}\ltwop{z_x}{z_v}
    + \epsilon \|z_v\|_{\leb2(\Omega)}^2
    + \ltwop{\nabla z_v}{\vec{M}\nabla z_v}.
  \end{align}
  The result then follows by the same analysis as in Lemma \ref{lem:primal-coerc-alt}.
\end{proof}

\begin{corollary}[Continuous dependence on problem data]
  \label{cor:stability-alt}
  Let the conditions of Lemma \ref{lem:primal-coerc-alt} be satisfied, and let $\delta$ be defined as in Lemma \ref{lem:stationary-stability}. If $\qp{u, z, f}\in\sobh2_{\Gamma_0^-}(\Omega)\times\sobh2_{\Gamma_0^+}(\Omega)\times\sobh2(\Omega)$ solve \eqref{eq: Stationary KKT}, then
  \begin{equation}
      \delta\qp{\|u\|_{\mathcal{H}}^2 + \frac{\alpha}{2}\|f\|_{\mathcal{H}}^2}
      \leq
      \frac{1}{2\alpha\delta}\|\mathcal{D}\|_{\mathcal{H}}^2.
  \end{equation}
\end{corollary}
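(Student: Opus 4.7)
The plan is to exploit the two coercivity estimates of Lemmas \ref{lem:primal-coerc-alt} and \ref{lem:dual-coerc-alt}, coupled through the first-order optimality condition \eqref{eq: Control Lagrangian}. First I would observe that the control equation $\ltwop{\alpha f - z}{\xi}_{\mathcal{H}} = 0$ for all $\xi \in \sobh1(\Omega)$ can be tested with $\xi = \alpha f - z \in \sobh1(\Omega)$ to deduce the pointwise identification $z = \alpha f$, which is the key coupling that will eliminate the adjoint variable.

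Next I would mirror the derivation of Lemma \ref{lem:stationary-stability} on both the primal and the adjoint equations. Testing the primal equation with $\phi = u$, combining Lemma \ref{lem:primal-coerc-alt}, the Poincar\'e inequality, and Lemma \ref{lem:norm-equiv} yields
\begin{equation}
    \delta\|u\|_{\mathcal{H}}^2 \leq a(u,u) = \ltwop{f}{u}_{\mathcal{H}}.
\end{equation}
The same argument applied to the adjoint equation tested with $\psi = z$, using the adjoint coercivity in Lemma \ref{lem:dual-coerc-alt}, gives
\begin{equation}
    \delta\|z\|_{\mathcal{H}}^2 \leq a^*(z,z) = \ltwop{\mathcal{D}-u}{z}_{\mathcal{H}}.
\end{equation}
Substituting $z = \alpha f$ into the second inequality and dividing by $\alpha$ gives $\delta\alpha\|f\|_{\mathcal{H}}^2 \leq \ltwop{\mathcal{D}-u}{f}_{\mathcal{H}}$.

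Adding the two estimates, the cross term $\ltwop{f}{u}_{\mathcal{H}}$ cancels against $-\ltwop{u}{f}_{\mathcal{H}}$ and we arrive at
\begin{equation}
    \delta\qp{\|u\|_{\mathcal{H}}^2 + \alpha\|f\|_{\mathcal{H}}^2} \leq \ltwop{\mathcal{D}}{f}_{\mathcal{H}}.
\end{equation}
The final step is a weighted Young inequality tuned so that the $\|f\|_{\mathcal{H}}^2$ term on the right can be absorbed: choosing the parameter $\eta = \alpha\delta$ produces $\ltwop{\mathcal{D}}{f}_{\mathcal{H}} \leq \frac{1}{2\alpha\delta}\|\mathcal{D}\|_{\mathcal{H}}^2 + \frac{\alpha\delta}{2}\|f\|_{\mathcal{H}}^2$, and rearranging yields exactly the claimed bound.

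I do not anticipate a serious obstacle; the argument is a clean consequence of the coercivity results already established. The only mildly delicate point is selecting the Young parameter, but this is forced by the desired factor $\alpha/2$ in front of $\|f\|_{\mathcal{H}}^2$ on the left-hand side. It is worth noting that the step $z = \alpha f$ is what makes the cancellation possible, so some care is needed to justify it as a genuine identity in $\sobh1(\Omega)$ rather than merely a weak relation in the $\mathcal{H}$-inner product.
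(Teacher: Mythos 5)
Your proof is correct and follows essentially the same route as the paper: test the primal and adjoint equations with $u$ and $z$, invoke the coercivity of $a$ and $a^*$ together with the Poincar\'e inequality and norm equivalence, cancel the cross term via the control equation, and finish with a weighted Young inequality. The only (cosmetic) difference is that you identify $z=\alpha f$ at the outset and eliminate $z$ entirely, whereas the paper keeps $z$ throughout and only converts at the end using $\alpha\Norm{f}_{\mathcal{H}}\leq\Norm{z}_{\mathcal{H}}$; your identification is justified since $\Norm{\cdot}_{\mathcal{H}}$ dominates the $\leb2$-norm, so testing the control equation with $\xi=\alpha f-z$ does give $z=\alpha f$ almost everywhere.
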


\begin{proof}
  Setting $\phi = u$, $\psi = z$, and $\xi = u$ in \eqref{eq:
    Stationary KKT} and combining yields
  \begin{equation}
  \label{eq:coercivity-oc-stat}
    \alpha a(u, u) + a^*(z, z) = \ltwop{\mathcal{D}}{z}_{\mathcal{H}}.
  \end{equation}
  Using the Poincar\'e inequality and Lemma \ref{lem:norm-equiv} in combination with Lemma \ref{lem:dual-coerc-alt} and Lemma \ref{lem:primal-coerc-alt}, respectively, leads to
  \begin{align}
      \alpha\delta\|u\|_{\mathcal{H}}^2 &\leq \alpha a(u, u), \label{eq:primal-H-coerc} \\
      \delta\|z\|_{\mathcal{H}}^2 &\leq a^*(z, z). \label{eq:dual-H-coerc}
  \end{align}
  It then follows that
  \begin{equation}
      \delta\qp{\alpha\|u\|_{\mathcal{H}}^2 + \|z\|_{\mathcal{H}}^2}
      \leq
      \ltwop{\mathcal{D}}{z}_{\mathcal{H}}.
  \end{equation}
  Applying the Cauchy-Schwarz and Young-with-$\varepsilon$ inequalities, with $\varepsilon = \delta/2$, then gives
  \begin{equation}
  \label{eq:oc-stability-eq}
      \delta\qp{\alpha\|u\|_{\mathcal{H}}^2 + \frac{1}{2}\|z\|_{\mathcal{H}}^2}
      \leq
      \frac{1}{2\delta}\|\mathcal{D}\|_{\mathcal{H}}^2.
  \end{equation}
  Choosing $\xi = f$ in \eqref{eq: Stationary KKT}, it can be shown by an application of the Cauchy-Schwarz inequality that
  \begin{equation}
  \label{eq:f-z-bound}
      \alpha\|f\|_{\mathcal{H}} \leq \|z\|_{\mathcal{H}}.
  \end{equation}
  Substituting this into \eqref{eq:oc-stability-eq} and dividing by $\alpha$ then concludes the proof.
\end{proof}

\subsection{Space-time control}

We now consider the dynamic optimal control problem. Without loss of generality, we take $u_0 = 0$, and we then define the admissible function
spaces
\begin{equation}
  \begin{split}
    \mathcal{A}^- &= \left\{\left. u \in \cont{1}\qp{I; \sobhsubsc{2}{0,\Gamma_0^-}(\Omega)}~\right| ~ \Norm{u(t)}_{\mathcal{H}} \in \cont{1}\qp{I}, \ u|_{t = 0} = 0 \right\}, \\
    \mathcal{A}^+ &= \left\{\left. z \in \cont{1}\qp{I; \sobhsubsc{2}{0,\Gamma_0^+}(\Omega)}~\right| ~ \Norm{z(t)}_{\mathcal{H}} \in \cont{1}\qp{I}, \ z|_{t = T} = 0 \right\} .
  \end{split}
\end{equation}
For $\alpha > 0$ and $\mathcal{D} \in \cont{}\left(I; \sobh1(\Omega)\right)$, we define the cost functional $J^T: \mathcal{A}^- \times \cont{}\left(I; \sobh1(\Omega)\right) \rightarrow [0, \infty)$ by
\begin{equation}\label{hypocoercive cost functional}
  J^T(u, f) := \int_0^T E(u(t), f(t)) \, \d t.
\end{equation}
We then seek a function pair $\qp{u^*, f^*} \in \mathcal{A}^-\times \cont{}\left(I; \sobh1(\Omega)\right)$, such that
\begin{equation}\label{eq: minimisation problem}
  \left( u^*, f^* \right) = \argmin_{\qp{u, f} \in \mathcal{A}^- \times \cont{}\left(I; \sobh1(\Omega)\right)} J^T(u, f),
\end{equation}
subject to the constraint
\begin{equation}\label{eq: forward 2}
  \begin{split}
    \ltwop{u_t}{\phi}_{\mathcal{H}} + a(u, \phi) &= \ltwop{f}{\phi}_{\mathcal{H}}, \qquad \text{a.e. } t \in \bar{I}, \quad \forall \phi \in \mathcal{A}^-.
  \end{split}
\end{equation}
Necessary first-order optimality conditions for the satisfaction of
the optimal control problem \eqref{eq: minimisation
  problem}--\eqref{eq: forward 2} are provided by the following.

\begin{lemma}[KKT conditions for the dynamic optimal control problem]\label{lemma: relation on the dynamic system}
  The functions $(u, z, f) \in \mathcal{A}^- \times \mathcal{A}^+ \times \cont{}\left(I; \sobh1(\Omega)\right)$ that solve the optimal control problem \eqref{eq: minimisation problem}--\eqref{eq: forward 2} satisfy the following system of equations:
  \begin{equation}\label{eq: Dynamic KKT}
    \begin{split}
      \int_0^T \left( \ltwop{u_t}{\phi}_{\mathcal{H}} + a(u, \phi) \right) \d t - \int_0^T \ltwop{f}{\phi}_{\mathcal{H}} \d t &= 0, \quad \forall \phi \in \mathcal{A}^-, \\
      \int_0^T \left( - \ltwop{z_t}{\psi}_{\mathcal{H}}+a^*(z, \psi) \right) \d t + \int_0^T \ltwop{u - \mathcal{D}}{\psi}_{\mathcal{H}} \d t &= 0, \quad \forall \psi \in \mathcal{A}^+, \\
      \int_0^T \ltwop{\alpha f - z}{\xi}_{\mathcal{H}} \d t &= 0, \quad \forall \xi \in \cont{}\left(I; \sobh1(\Omega)\right).
    \end{split}
  \end{equation}
  Additionally, the functions $(u, z) \in \mathcal{A}^- \times
  \mathcal{A}^+$ satisfy the relation:
  \begin{equation}\label{eq: for+adj sum}
    \begin{split}
      \dfrac{\alpha}{2} \|u(T)\|^2_{\mathcal{H}} + \dfrac{1}{2} \|z(0)\|^2_{\mathcal{H}} + \alpha \int_0^T a(u, u) \d t + \int_0^T a^*(z, z) \d t
      &= \int_0^T \ltwop{\mathcal{D}}{z}_{\mathcal{H}} \d t.
    \end{split}
  \end{equation}
\end{lemma}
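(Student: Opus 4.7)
The plan is to mimic the stationary derivation in Lemma \ref{Derivation of the optimal control}, replacing the time-independent Lagrangian with its time-integrated analogue and being careful about the temporal boundary terms introduced by integration by parts in $t$. Concretely, I would introduce the Lagrangian
\begin{equation*}
\mathcal{L}(u, f, z) := J^T(u,f) + \int_0^T \qp{\ltwop{u_t}{z}_{\mathcal{H}} + a(u, z) - \ltwop{f}{z}_{\mathcal{H}}} \d t,
\end{equation*}
with primal variable $u \in \mathcal{A}^-$, control $f \in \cont{}(I;\sobh1(\Omega))$, and multiplier $z \in \mathcal{A}^+$. The choice of multiplier space is dictated by the terminal condition $z(T) = 0$ needed to cancel the boundary contribution from integrating $\ltwop{u_t}{z}_{\mathcal{H}}$ by parts in time.

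The three KKT equations in \eqref{eq: Dynamic KKT} follow by setting the first variations of $\mathcal{L}$ to zero. Variation with respect to $z$ reproduces \eqref{eq: forward 2}, giving the primal equation. Variation with respect to $f$ gives $\ltwop{\alpha f - z}{\xi}_{\mathcal{H}} = 0$ pointwise in time for all admissible $\xi$, and hence the control equation. Variation with respect to $u$ is the delicate step: since $u \in \mathcal{A}^-$ has $u(0) = 0$ but $u(T)$ is free, admissible variations $\psi$ must satisfy $\psi(0) = 0$. Integrating $\int_0^T \ltwop{\psi_t}{z}_{\mathcal{H}} \d t$ by parts produces boundary terms $\ltwop{\psi(T)}{z(T)}_{\mathcal{H}} - \ltwop{\psi(0)}{z(0)}_{\mathcal{H}}$, both of which vanish thanks to $z(T) = 0$ and $\psi(0) = 0$. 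Recognising that the remaining spatial bilinear piece produced by varying $u$ in $a(u,z)$ is precisely $a^*(z, \psi)$ (by definition \eqref{eq:adjoint-def}) yields the adjoint equation. Convexity of $E$ in $(u,f)$ together with the linear constraint guarantees these first-order conditions are both necessary and sufficient, exactly as in Lemma \ref{Derivation of the optimal control}.

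For the energy identity \eqref{eq: for+adj sum}, I would test the primal equation with $\phi = u$ (multiplied by $\alpha$) and the adjoint equation with $\psi = z$, then add them. The two time-derivative terms integrate in closed form:
\begin{equation*}
\alpha \int_0^T \ltwop{u_t}{u}_{\mathcal{H}} \d t = \frac{\alpha}{2}\|u(T)\|^2_{\mathcal{H}}, \qquad -\int_0^T \ltwop{z_t}{z}_{\mathcal{H}} \d t = \frac{1}{2}\|z(0)\|^2_{\mathcal{H}},
\end{equation*}
using $u(0) = 0$ and $z(T) = 0$ respectively. Summing the two tested equations produces all the desired terms on the left-hand side, along with a stray $\alpha \int_0^T \ltwop{f}{u}_{\mathcal{H}} \d t - \int_0^T \ltwop{u}{z}_{\mathcal{H}} \d t$, which equals $\int_0^T \ltwop{\alpha f - z}{u}_{\mathcal{H}} \d t$ and vanishes by testing the control equation with $\xi = u$ (legitimate since $\mathcal{A}^- \hookrightarrow \cont{}(I; \sobh1(\Omega))$). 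What remains on the right is $\int_0^T \ltwop{\mathcal{D}}{z}_{\mathcal{H}} \d t$, as required.

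The main obstacle I anticipate is the time integration-by-parts step in the variation with respect to $u$: one must carefully argue that admissible variations $\psi$ inherit $\psi(0) = 0$ from the constraint $u \in \mathcal{A}^-$, and that $z(T) = 0$ emerges naturally as a transversality condition that kills the other temporal boundary term. Once these endpoint conditions are correctly identified, the remaining manipulations are essentially a time-integrated repetition of the stationary argument.
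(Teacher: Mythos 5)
Your proposal is correct and follows essentially the same route as the paper: the KKT system is obtained by varying the time-integrated Lagrangian exactly as in the stationary case (with the terminal condition on $z$ killing the temporal boundary term), and the identity \eqref{eq: for+adj sum} is obtained by testing with $\phi = u$, $\psi = z$, integrating the time derivatives exactly via $u(0)=0$, $z(T)=0$, and using the control equation to eliminate the $f$-term so that the cross-terms $\ltwop{u}{z}_{\mathcal{H}}$ cancel. The only cosmetic difference is that the paper substitutes the control equation into the primal equation \emph{before} testing, whereas you apply it at the end with $\xi = u$ to remove the stray term; the two orderings are equivalent.
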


\begin{proof}
  By applying the same methodology as in Lemma \ref{Derivation of the
    optimal control}, we deduce that the minima of the dynamic problem
  satisfy the KKT conditions given in equation \eqref{eq: Dynamic
    KKT}. To derive the bound in equation \eqref{eq: for+adj sum} we
  notice that the control equation may be substituted into the primal
  equation, showing
  \begin{equation}\label{eq: Primal-Dual combined}
    \begin{split}
      \alpha \int_0^T \left( \ltwop{u_t}{\phi}_{\mathcal{H}} + a(u, \phi) \right) \d t &= \int_0^T \ltwop{z}{\phi}_{\mathcal{H}} \d t, \\
      \int_0^T \left(  - \ltwop{z_t}{\psi}_{\mathcal{H}} + a^*(z, \psi) \right) \d t &= -\int_0^T \ltwop{u}{\psi}_{\mathcal{H}} \d t + \int_0^T \ltwop{\mathcal{D}}{\psi}_{\mathcal{H}} \d t,
    \end{split}
  \end{equation}
  for all $\phi \in \mathcal{A}^-$ and $\psi \in \mathcal{A}^+$. Let $\phi = u$ and $\psi = z$, and sum the resulting equations to give
  \begin{equation}\label{eq: proof of dynamic KKT relation 1}
    \begin{split}
      \int_0^T \left( \alpha \ltwop{u_t}{u}_{\mathcal{H}} - \ltwop{z_t}{z}_{\mathcal{H}} + \alpha a(u, u) + a^*(z, z) \right) \d t &= \int_0^T \ltwop{\mathcal{D}}{z}_{\mathcal{H}} \d t.
    \end{split}
  \end{equation}
  Note that the ``cross-term'' $\ltwop{u}{z}_{\mathcal{H}}$ has been
  eliminated. 
  
  Next, observe that
  \begin{equation}
    \begin{split}
      2 \int_0^T \left( \alpha \ltwop{u_t}{u}_{\mathcal{H}} - \ltwop{z_t}{z}_{\mathcal{H}} \right) \d t
      &= \int_0^T \left( \alpha \frac{\d}{\d t} \|u\|^2_{\mathcal{H}} - \frac{\d}{\d t} \|z\|^2_{\mathcal{H}} \right) \d t \\
      &= \alpha \|u(T)\|^2_{\mathcal{H}} + \|z(0)\|^2_{\mathcal{H}}.
    \end{split}
  \end{equation}
  Substituting this relation into \eqref{eq: proof of dynamic KKT relation 1} completes the proof.
\end{proof}

\begin{remark}[Box control constraints]
  This approach can be extended to optimal control problems with additional constraints. For instance, consider the box constraint $f \in [0, 1]$.
  By applying the same variational methods, we obtain equations \eqref{eq: Primal Lagrangian} and \eqref{eq: Dual Lagrangian}, along with the following variational inequality:
  \begin{equation}
    \int\limits_{0}^T\ltwop{\alpha f - z}{\xi - f}_{ \mathcal{H}} \ \d t\geq 0 \qquad \forall \xi \in \cont{}\left(I; \sobh1(\Omega,[0,1])\right).
  \end{equation}
  Therefore, for the canonical projection operator $\mathcal{P}_{[0,1]}: \sobh1(\Omega) \rightarrow \sobh1(\Omega, [0, 1])$, we have
  \begin{equation}
    \dfrac{1}{\alpha} \mathcal{P}_{[0,1]} z = f \qquad \text{a.e. on } I \times \Omega.
  \end{equation}
  Since the operator norm of the projection is bounded, we can substitute the projection into the primal equation and apply the Cauchy-Schwarz inequality to obtain
  \begin{equation}
    \begin{split}
      \dfrac{\alpha}{2} \|u(T)\|^2_{\mathcal{H}} + \dfrac{1}{2} \|z(0)\|^2_{\mathcal{H}} + \alpha \int_0^T a(u, u) \d t + \int_0^T a^*(z, z) \d t
      &\leq \int_0^T \ltwop{\mathcal{D}}{z}_{\mathcal{H}} \d t.
    \end{split}
  \end{equation}
\end{remark}

The final result of this section then demonstrates the stability in time of the solution to the time-dependent hypocoercive optimal control problem \eqref{eq: minimisation problem}--\eqref{eq: forward 2}.

\begin{theorem}[Temporal decay]
\label{thm:oc-temporal-stability}
  Let the conditions of Lemma \ref{lem:primal-coerc-alt} be satisfied, and let $\delta$ be defined as in Lemma \ref{lem:stationary-stability}. Suppose the functions $u \in \mathcal{A}^-$ and $f \in \cont{}\left(I; \sobh1(\Omega)\right)$ satisfy \eqref{eq: Primal-Dual combined}. Then
  \begin{equation}
  \|u(T)\|^2_{\mathcal{H}} + \alpha \|z(0)\|^2_{\mathcal{H}}
    \leq
    \frac{1}{\alpha\delta}\int_0^T \exp\qp{-\delta\qp{T-t}}\|\mathcal{D}(t)\|^2_{\mathcal{H}} \d t.
  \end{equation}
\end{theorem}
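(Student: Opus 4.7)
The plan is to start from the combined primal-dual identity in \eqref{eq: Primal-Dual combined} and convert it to a pointwise-in-time differential inequality, then to the claimed exponentially-weighted bound via an integrating factor. First, I would take $\phi = u$ and $\psi = z$ in the primal and dual equations of \eqref{eq: Primal-Dual combined}, but applied before time-integration. The cross term $\ltwop{u}{z}_{\mathcal{H}}$ cancels exactly as in the proof of Lemma \ref{lemma: relation on the dynamic system}, leaving
\[
  \frac{\alpha}{2}\frac{\d}{\d t}\|u\|_{\mathcal{H}}^2 - \frac{1}{2}\frac{\d}{\d t}\|z\|_{\mathcal{H}}^2 + \alpha\, a(u, u) + a^*(z, z) = \ltwop{\mathcal{D}}{z}_{\mathcal{H}}.
\]

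Next, I would appeal to the coercivity estimates of Lemmas \ref{lem:primal-coerc-alt} and \ref{lem:dual-coerc-alt} combined with Poincar\'e and the norm equivalence of Lemma \ref{lem:norm-equiv}, exactly as in the proof of Proposition \ref{prop:primal-stability-alt}, to obtain $a(u,u) \geq \delta\|u\|_{\mathcal{H}}^2$ and $a^*(z,z) \geq \delta\|z\|_{\mathcal{H}}^2$. A Cauchy-Schwarz/Young step with $\varepsilon = \delta/2$ applied to $\ltwop{\mathcal{D}}{z}_{\mathcal{H}}$ absorbs a multiple of $\|z\|_{\mathcal{H}}^2$ into the left-hand side, producing a differential inequality of the form
\[
  \frac{\alpha}{2}\frac{\d}{\d t}\|u\|_{\mathcal{H}}^2 - \frac{1}{2}\frac{\d}{\d t}\|z\|_{\mathcal{H}}^2 + \alpha\delta\|u\|_{\mathcal{H}}^2 + \frac{\delta}{2}\|z\|_{\mathcal{H}}^2 \leq \frac{1}{2\delta}\|\mathcal{D}\|_{\mathcal{H}}^2.
\]

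The decisive step is then to multiply by the integrating factor $e^{\delta t}$ and rearrange using
\[
  e^{\delta t}\frac{\d}{\d t}\|w\|_{\mathcal{H}}^2 = \frac{\d}{\d t}\!\left[e^{\delta t}\|w\|_{\mathcal{H}}^2\right] - \delta e^{\delta t}\|w\|_{\mathcal{H}}^2,
\]
so that the spurious $\delta e^{\delta t}\|u\|_{\mathcal{H}}^2$ and $\delta e^{\delta t}\|z\|_{\mathcal{H}}^2$ terms introduced by the exponential weighting are absorbed by the dissipative terms already on the left. Integrating from $0$ to $T$, the boundary conditions $u(0)=0$ and $z(T)=0$ cause exactly the desired boundary contributions $e^{\delta T}\|u(T)\|_{\mathcal{H}}^2$ and $\|z(0)\|_{\mathcal{H}}^2$ to survive, while the right-hand side becomes $\frac{1}{2\delta}\int_0^T e^{\delta t}\|\mathcal{D}\|_{\mathcal{H}}^2\d t$. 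Dividing through by $e^{\delta T}$ (and reorganising the factors of $\alpha$) then gives the exponential weight $e^{-\delta(T-t)}$ in the source term and the claimed form of the bound.

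The main obstacle I expect is the opposite signs on the time derivatives of $\|u\|_{\mathcal{H}}^2$ and $\|z\|_{\mathcal{H}}^2$ in the combined identity, reflecting that $u$ is a forward-in-time evolution while $z$ is a backward-in-time evolution; the integrating factor $e^{\delta t}$ handles both simultaneously, but one must carefully track the exponential prefactors so that the boundary values at $t=T$ for $u$ and $t=0$ for $z$ emerge with the appropriate weights stated in the theorem. Once that bookkeeping is done, the remainder is a direct Gr\"onwall-type estimate.
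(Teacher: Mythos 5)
Your route is essentially the paper's: the authors also pass from the combined primal--dual identity of Lemma \ref{lemma: relation on the dynamic system} to the lower bounds $\alpha a(u,u)\geq \alpha\delta\|u\|_{\mathcal H}^2$ and $a^*(z,z)\geq\delta\|z\|_{\mathcal H}^2$ via Lemmas \ref{lem:primal-coerc-alt}, \ref{lem:dual-coerc-alt} and \ref{lem:norm-equiv} together with Poincar\'e, absorb $\ltwop{\mathcal D}{z}_{\mathcal H}$ by Young, and then invoke Gr\"onwall; your integrating-factor computation is simply the explicit form of that last step, which the paper compresses into a single sentence.

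The one place where your bookkeeping does not deliver the statement as written is the $z(0)$ term, and it is exactly the point you flag as the ``main obstacle''. Multiplying the differential inequality by $e^{\delta t}$, dropping the leftover non-negative dissipative terms and integrating gives
\begin{equation*}
  \frac{\alpha}{2}e^{\delta T}\|u(T)\|_{\mathcal H}^2+\frac{1}{2}\|z(0)\|_{\mathcal H}^2
  \leq
  C\int_0^T e^{\delta t}\|\mathcal D(t)\|_{\mathcal H}^2\,\mathrm{d}t,
\end{equation*}
and after dividing by $e^{\delta T}$ the $u(T)$ term is correctly normalised but the $z(0)$ term acquires the factor $e^{-\delta T}$: what you actually obtain is a bound on $\|u(T)\|_{\mathcal H}^2+\alpha^{-1}e^{-\delta T}\|z(0)\|_{\mathcal H}^2$, not on $\|u(T)\|_{\mathcal H}^2+\alpha\|z(0)\|_{\mathcal H}^2$. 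This cannot be repaired by choosing a different single integrating factor, because the forward variable wants the weight $e^{\delta t}$ while the backward variable wants $e^{-\delta t}$, and the cancellation of the cross term $\ltwop{u}{z}_{\mathcal H}$ forces the same weight on both equations. In fairness, the paper's own proof suppresses this issue entirely (and its intermediate constants, e.g.\ the factor $\frac{1}{2\alpha\delta}$ versus your $\frac{1}{2\delta}$, do not transparently produce the stated coefficient on $\|z(0)\|_{\mathcal H}^2$ either); the unweighted integrated inequality does yield a clean bound on $\frac{\alpha}{2}\|u(T)\|_{\mathcal H}^2+\frac{1}{2}\|z(0)\|_{\mathcal H}^2$, but without the exponential kernel. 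So your proposal faithfully reproduces the paper's argument, but the claim that the stated weights ``emerge'' from the division by $e^{\delta T}$ is not correct for the $z(0)$ contribution, and that step needs to be either weakened (accept the $e^{-\delta T}$ factor) or supplemented by the unweighted estimate for $z(0)$.
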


\begin{proof}
  From Lemma \ref{lemma: relation on the dynamic system} we have
  \begin{equation}
    \dfrac{\alpha}{2} \|u(T)\|^2_{\mathcal{H}}
    +
    \dfrac{1}{2} \|z(0)\|^2_{\mathcal{H}}
    +
    \int_0^T \qp{\alpha a(u, u) + a^*(z, z)} \d t
    =
    \int_0^T \ltwop{\mathcal{D}}{z}_{\mathcal{H}} \d t,
  \end{equation}
  and applying a similar analysis to \eqref{eq:primal-H-coerc}--\eqref{eq:f-z-bound} leads to
  \begin{equation}
    \frac{\alpha}{2}\|u(T)\|^2_{\mathcal{H}}
    +
    \frac{1}{2}\|z(0)\|^2_{\mathcal{H}}
    +
    \int_0^T \delta\qp{\alpha\|u\|^2_{\mathcal{H}}
      +
      \frac{1}{2}\|z\|^2_{\mathcal{H}}} \d t
      \leq \frac{1}{2\alpha\delta}\int_0^T \|\mathcal{D}\|_{\mathcal{H}}^2 \d t.
  \end{equation}
  We may then apply Gr\"onwall's inequality to obtain the result.
\end{proof}

\begin{remark}[Pointwise in time]
  The result from Theorem \ref{thm:oc-temporal-stability} can be
  generalised to obtain a pointwise bound on the $\mathcal{H}$-norms
  of $u$ and $z$. For an arbitrary $\tau \in (0, T]$, consider the
functional $J^\tau: \mathcal{A}^- \times \cont{}\left(\bar{I};
\sobh1(\Omega)\right) \rightarrow \mathbb{R}$ defined as:
    \begin{equation}
        J^\tau(u, f) := \int_0^\tau E(u(t), f(t)) \, \d t.
    \end{equation}
    Applying the same analysis as in Theorem
    \ref{thm:oc-temporal-stability}, with the same terminal condition
    on $z$, we obtain the bound
    \begin{equation}
      \begin{split}
        \alpha \|u(\tau)\|^2_{\mathcal{H}} + \|z(T - \tau)\|^2_{\mathcal{H}}
        \leq
        \frac{1}{\alpha\delta}\int_0^\tau \exp\qp{-\delta\qp{\tau-t}}\|\mathcal{D}(t)\|^2_{\mathcal{H}} \d t.
      \end{split}
    \end{equation}
    Since this result holds for any $\tau \in (0, T]$, we obtain a
pointwise bound on the $\mathcal{H}$-norm of $u$ for each $\tau$ in
the time interval.
\end{remark}

\section{Hypocoercivity-preserving finite element methods}
\label{sec:fem}

In this section, we introduce the finite element method proposed in
\cite{georgoulis2020hypocoercivitycompatible}. We extend this method
to the optimal control formulation and demonstrate that the finite
element discretisation satisfies a discrete hypocoercivity result
consistent with Theorem \ref{thm:oc-temporal-stability}.

Let $\mathcal{T}$ be a regular subdivision of $\Omega$ into disjoint
simplicial or box-type (quadrilateral/hexahedral) elements $K$. We
assume that the subdivision $\mathcal{T}$ is shape-regular (see e.g.
p.124 in \cite{ciarlet}), and that $\bar{\Omega} = \cup_{K}\bar{K}$,
where the elemental faces are straight line segments (referred to as
\emph{facets}). Let $\mathcal{E}$ denote the union of all
1-dimensional facets associated with the subdivision $\mathcal{T}$,
including those on the boundary. We define the set of internal facets
as $\mathcal{E}_{int} := \mathcal{E} \setminus \partial \Omega$.

For an integer $r \geq 2$, let $\mathcal{P}_r(K)$ denote the set of
polynomials of total degree at most $r$ on element $K$, and let
$\mathcal{Q}_r(K)$ denote the set of tensor-product polynomials of
degree at most $r$ in each variable. We define the finite element
space as
\begin{equation}
  \fes^\pm_0 := \{ v \in \sobh 1_{0,\Gamma^\pm_0}(\Omega) : v|_{K} \in \mathcal{R}_{r}(K) \},
\end{equation}
where $\mathcal{R}_{r}(K) \in \{\mathcal{P}_{r}(K),
\mathcal{Q}_{r}(K)\}$. We emphasise that local bases in
$\mathcal{P}_r(K)$ can also be used for box-type elements, as
discussed in \cite{DGpoly1,DGpoly2,DGpolyparabolic}.  We also define
the broken Sobolev spaces $\sobh{m}(\Omega,\mathcal{T})$, with respect
to the subdivision $\mathcal{T}$, as
\begin{equation}
  \sobh{m}(\Omega,\mathcal{T})
  :=
  \{ u \in \leb2(\Omega) : u|_{K} \in \sobh{m}(K),\, K \in \mathcal{T} \}.
\end{equation}

Let $K^+$ and $K^-$ be two adjacent elements sharing a facet $e :=
\partial K^+ \cap \partial K^- \subset \mathcal{E}_{int}$, with
outward normal unit vectors $\vec{n}^+$ and $\vec{n}^-$ on $e$,
respectively. The associated velocity components of the normals are denoted $\vec{n}_v^+$ and $\vec{n}_v^-$. For a potentially discontinuous function $w:\Omega \to
\mathbb{R}$ across $\mathcal{E}_{int}$, we define $w^+ := w|_{e
  \subset \partial K^+}$ and $w^- := w|_{e \subset \partial K^-}$, and
define the jump operators
\begin{align}
  &\jump{w} := w^+ \vec{n}^+ + w^- \vec{n}^-, &&\jump{w}_v := w^+ \vec{n}_v^+ + w^- \vec{n}_v^-.
\end{align}
If $e \in \partial K \cap \partial \Omega$, we define $\jump{v} := v^+
\vec{n}$ and $\jump{v}_v := v^+
\vec{n}_v$. Additionally, we define $h_{K} := \text{diam}(K)$ and collect
the element diameters into the element-wise constant function ${\bf
  h}:\Omega \to \mathbb{R}$, where ${\bf h}|_{K} = h_{K}$, ${\bf h}|_e
= (h_{K^+} + h_{K^-})/2$ for $e \subset \mathcal{E}_{int}$, and ${\bf
  h}|_e = h_K$ for $e \subset \partial K \cap \partial \Omega$. We
assume that the families of meshes considered are locally
quasi-uniform.

We now proceed to build the discretisation based on the results in
\S\ref{sec:hypo}--\ref{sec:OC}.

\subsection{Stationary Kolmogorov}
To begin, we consider the stationary
primal problem given in \S\ref{sec:stationary-cont}.
For functions $W, \Phi \in \sobh 2(\Omega, \mathcal{T})$, we define the discrete bilinear form
\begin{equation}
\begin{split}
  \label{eq:discretebilinear}
  a_h(W, \Phi)
  &:=
  \sum_{K \in \mathcal{T}}
  \ltwop{vW_x}{\Phi}_K
  +
  \ltwop{\nabla(v W_x)}{\vec M \nabla \Phi}_K
  +
  \epsilon \ltwop{W_v}{\Phi_v}_K
  +
  \epsilon \ltwop{\nabla W_v}{\vec M \nabla \Phi_v}_K \\
  &= \sum_{K\in\mathcal{T}}
  \ltwop{v\mathcal{H}W_x}{\mathcal{H}\Phi}_K
  +
  \ltwop{W_x}{M_{12}\Phi_x + M_{22}\Phi_v}_K
  +
  \epsilon\ltwop{W_v}{\Phi_v}_K
  +
  \epsilon\ltwop{\nabla W_v}{\vec{M}\nabla \Phi_v}_K,
  \end{split}
\end{equation}
and consider the following finite element method: Given $f \in \sobh1(\Omega)$, find $U \in \fes^-_0$ such that
\begin{equation}
  \begin{split}
    \label{eq:spatial-disc}
    a_h(U, \Phi)
    +
    s_h(U, \Phi)
    &=
    \ltwop{f}{\Phi}_\mathcal{H}
    \Foreach \Phi \in \fes^-_0,
  \end{split}
\end{equation}
where $s_h(\cdot, \cdot)$ denotes an appropriate stabilisation term to
ensure consistency and stability of the method.

\begin{remark}[Consistent stabilisation]
    An example of a consistent stabilisation is given by
    \begin{equation}
      \label{eq:stab}
      \begin{split}
        s_h(U, \Phi)
        &:=
        s_{h,1}(U, \Phi)
        +
        s_{h,2}(U, \Phi) \\
        &=
        -\int_{\mathcal{E}_{int} \cup \Gamma^-}
        \Transpose{(v, 0)} \cdot
        \left(
          M_{11} \jump{U_x} \avg{\Phi_x}
          +
          M_{12} \jump{U_x} \avg{\Phi_v}
          +
          M_{12} \jump{U_v} \avg{\Phi_x}
          +
          M_{22} \jump{U_v} \avg{\Phi_v}
        \right) \d s \\
        &\quad
        -\int_{\mathcal{E}_{int} \cup \Gamma_0}
        \qp{\avg{\vec M \nabla U_v} \cdot \jump{\nabla \Phi}_v
        +
        \avg{\vec M \nabla \Phi_v} \cdot \jump{\nabla U}_v
        -
        \sigma \jump{\nabla U}_v \cdot \vec M \jump{\nabla \Phi}_v} \d s,
      \end{split}
    \end{equation}
    where $\sigma = C_\sigma r^2 \mathbf{h}^{-1}$ for some constant
    $C_\sigma$, independent of $\bf{h}$ and $r$. This stabilisation
    technique shares similarities with other methods. For instance,
    $s_{h,1}(\cdot, \cdot)$ has been used in the approximation of
    dispersive operators \cite{KarakashianMakridakis:2015}, and
    $s_{h,2}(\cdot, \cdot)$ resembles $C^0$ interior penalty methods
    for biharmonic operators \cite{BrennerSung:2005}.
\end{remark}

\begin{assumption}[The stabilisation choice]
  We do not impose any specific choice of stabilisation; however, in
  the numerical experiments, we use (\ref{eq:stab}). For the analysis,
  it is sufficient to assume three things:
  \begin{itemize}
  \item
    The stabilisation is
    consistent. That is, for all $w \in \sobhsubsc{2}{0,
      \Gamma_0^-}(\Omega)\cap \sobh{3}(\Omega, \mathcal T)$ we have
    \begin{equation}
      a_h(w, \Phi) + s_h(w, \Phi) = a(w,\Phi) \Foreach \Phi\in \fes_0^-.
    \end{equation}
  \item The stabilisation is equivalent to the nonconforming component
    of the norm, for any $W\in\fes_0^-$ that
    \begin{equation}
      c_0 \Norm{\sqrt{\sigma\vec M}\jump{\nabla W}_v}^2_{\leb{2}(\mathcal{E}_{int} \cup \Gamma^-)}
      \leq
      s_h(W,W)
      \leq
      C_0 \Norm{\sqrt{\sigma\vec M}\jump{\nabla W}_v}^2_{\leb{2}(\mathcal{E}_{int} \cup \Gamma^-)}.    
    \end{equation}
    \item The stabilisation is bounded, for any $W,\Phi\in\fes_0^-$    
    \begin{equation}
      s_h(W,\Phi)
        \leq
        C_{BS}
     \Norm{\sqrt{\sigma\vec M}\jump{\nabla W}_v}_{\leb{2}(\mathcal{E}_{int} \cup \Gamma^-)}
     \Norm{\sqrt{\sigma\vec M}\jump{\nabla \Phi}_v}_{\leb{2}(\mathcal{E}_{int} \cup \Gamma^-)}.
    \end{equation}
      \end{itemize}
\end{assumption}

For $W\in\sobh{3}(\Omega, \mathcal T)$, we
introduce the discrete hypocoercive norm
\begin{equation}
    \tnorm{W}^2 := \|\mathcal{H}W\|_{\Gamma^+}^2 + \sum_{K\in\mathcal{T}}\left(\|\nabla W\|_{\leb2(K)}^2 + \|\nabla W_v\|_{\leb2(K)}^2\right) + \sigma\|\sqrt{\vec{M}}\jump{\nabla W}_v\|_{\leb2(\mathcal{E}_{int})}^2,
\end{equation}
under which the following results demonstrate the discrete problem is coercive.
\begin{lemma}[Discrete norm relations]
  \label{lem:equivalence}
  Let $W\in\fes^-_0$, and let $C_p$ be the Poincar\'e constant
  associated to $\Omega$. Then
  \begin{equation}
    \Norm{W}_{\mathcal{H}}^2
    \leq
    C_{\textup{eq},h}^+ \tnorm{W}^2,
  \end{equation}
  where $C_{\textup{eq},h}^+ := C_p^2 + \lambda_+\qp{\vec{M}}$.
\end{lemma}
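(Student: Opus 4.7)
The plan is to bound the two constituents of $\Norm{W}_{\mathcal{H}}^2$ directly in terms of $\Norm{\nabla W}_{\leb2(\Omega)}^2$ and then recognise that this gradient term is already controlled by $\tnorm{W}^2$. Because $W \in \fes_0^- \subset \sobh1_{0,\Gamma_0^-}(\Omega)$, $W$ is globally $H^1$ (no jumps in $W$ itself), so $\Norm{\nabla W}_{\leb2(\Omega)}^2 = \sum_{K} \Norm{\nabla W}_{\leb2(K)}^2$ is a genuine part of $\tnorm{W}^2$.

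First, for the $\leb2$ component of $\Norm{W}_{\mathcal{H}}^2$, use that $W$ vanishes on a subset $\Gamma_0^- \subset \partial\Omega$ of positive $(d-1)$-measure to apply the Poincar\'e inequality, giving $\Norm{W}_{\leb2(\Omega)}^2 \le C_p^2 \Norm{\nabla W}_{\leb2(\Omega)}^2$. Second, for the weighted gradient component, the upper bound in \eqref{eq:bound-grad-term} from Lemma \ref{lem:norm-equiv} already gives $\Norm{\sqrt{\vec M}\nabla W}_{\leb2(\Omega,\reals^2)}^2 \le \lambda_+(\vec M)\Norm{\nabla W}_{\leb2(\Omega)}^2$. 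Adding these two bounds yields
\begin{equation}
    \Norm{W}_{\mathcal{H}}^2 \le \bigl(C_p^2 + \lambda_+(\vec M)\bigr)\Norm{\nabla W}_{\leb2(\Omega)}^2.
\end{equation}

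Finally, since $\Norm{\nabla W}_{\leb2(\Omega)}^2 = \sum_{K\in\mathcal{T}}\Norm{\nabla W}_{\leb2(K)}^2$ and the latter appears as one of the non-negative contributions to $\tnorm{W}^2$, we conclude $\Norm{\nabla W}_{\leb2(\Omega)}^2 \le \tnorm{W}^2$, and combining with the previous inequality delivers the claim with $C_{\textup{eq},h}^+ = C_p^2 + \lambda_+(\vec M)$.

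There is no genuine obstacle here: the only subtle point is confirming that Poincar\'e is applicable on the nonstandard boundary set $\Gamma_0^-$, which is guaranteed by the definition of the space $\fes_0^-$. The jump-penalty and boundary-trace contributions in $\tnorm{\cdot}$ are not needed for this upper bound; they merely provide additional non-negative slack, which is precisely why the corresponding coercivity direction (the converse inequality) is expected to be the harder estimate to establish later.
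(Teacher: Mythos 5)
Your proof is correct and follows essentially the same route as the paper's: apply the Poincar\'e inequality (valid since $W$ vanishes on $\Gamma_0^-$) to the $\leb2$ term, use the eigenvalue bound \eqref{eq:bound-grad-term} on the weighted gradient term, and observe that $\Norm{\nabla W}_{\leb2(\Omega)}^2$ is one of the non-negative contributions to $\tnorm{W}^2$. The paper's proof is simply a terser version of the same argument, leaving the final step ($\Norm{\nabla W}_{\leb2(\Omega)}^2 \leq \tnorm{W}^2$) implicit, which you spell out explicitly.
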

\begin{proof}
    Recalling \eqref{eq:bound-grad-term} and applying the Poincar\'e inequality, we have
    \begin{align}
        \|W\|_{\mathcal{H}}^2
        &\leq
        \|W\|_{\leb2(\W)}^2 + \lambda_+\qp{\vec{M}}\|\nabla W\|_{\leb2(\W)}^2 \\
        &\leq
        \qp{C_p^2 + \lambda_+\qp{\vec{M}}}\|\nabla W\|_{\leb2(\W)}^2.
    \end{align}
    The result then follows.
\end{proof}

\begin{lemma}[The discrete problem is coercive]
\label{lem:discrete-primal-coerc}
Let $W\in\fes_0^-$, and let $\vec{M}$ be strictly positive definite
and such that $4 \epsilon M_{12} > M_{22}^2$, guaranteeing
$\lambda_-\qp{\vec{M}}, \lambda_-(\vec N) > 0$. Then, for $C_{\sigma}$
chosen large enough,
    \begin{equation}
        a_h(W, W) + s_h(W, W)
        \geq
        C_c \tnorm{W}^2,
    \end{equation}
    where $C_c > 0$ is a coercivity constant.
\end{lemma}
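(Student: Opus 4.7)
The plan is to mimic the continuous coercivity proof of Lemma~\ref{lem:primal-coerc-alt} element-wise, using the stabilisation to absorb the boundary and jump contributions that arise because $\fes^-_0$ is only $\sobh{1}$-conforming with $W = 0$ on $\Gamma_0^-$ rather than $\mathcal{H}W = \vec 0$.

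First I would apply integration by parts element-by-element to the convection term in the second representation of $a_h(W,W)$. Since $v$ is independent of $x$, one has $v\mathcal{H}W_x\cdot\mathcal{H}W = \tfrac{v}{2}\partial_x|\mathcal{H}W|^2$ on each $K$, and hence
\[
  \sum_{K\in\mathcal{T}}\ltwop{v\mathcal{H}W_x}{\mathcal{H}W}_K
  = \tfrac{1}{2}\|\mathcal{H}W\|^2_{\Gamma^+}
  - \tfrac{1}{2}\|\sqrt{\vec M}\nabla W\|^2_{\Gamma^-}
  + \tfrac{1}{2}\sum_{e\in\mathcal{E}_{int}}\int_e v n_x^+ \bigl(|\sqrt{\vec M}\nabla W^+|^2 - |\sqrt{\vec M}\nabla W^-|^2\bigr)\dS,
\]
where the $\Gamma_0$ contribution vanishes because $n_x = 0$, the $\Gamma^-$ contribution uses $W = 0$ there, and on internal facets the $W^2$ parts of the jump drop out by the $\sobh{1}$-conformity of $\fes^-_0$. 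The remaining volume contributions assemble, exactly as in Lemma~\ref{lem:primal-coerc-alt}, into $\ltwop{\nabla W}{\vec N\nabla W} + \epsilon\sum_K\ltwop{\nabla W_v}{\vec M\nabla W_v}_K$, which is bounded below by $\lambda_-(\vec N)\|\nabla W\|^2 + \epsilon\lambda_-(\vec M)\sum_K\|\nabla W_v\|^2_K$.

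Next, unfolding the definition of the consistent stabilisation $s_{h,1}(W,W)$ and again using $W^+ = W^-$ on $\mathcal{E}_{int}$, one finds that its internal-facet contribution equals $-\sum_e\int_e v n_x^+ \avg{\nabla W}^{\top}\vec M(\nabla W^+ - \nabla W^-)\dS$, i.e.\ the exact negative of the facet-jump term above; on $\Gamma^-$ the same unfolding produces $+\|\sqrt{\vec M}\nabla W\|^2_{\Gamma^-}$, which combines with the $-\tfrac12\|\sqrt{\vec M}\nabla W\|^2_{\Gamma^-}$ from the integration by parts step to leave the nonnegative residue $+\tfrac12\|\sqrt{\vec M}\nabla W\|^2_{\Gamma^-}$. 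Collecting,
\[
  a_h(W,W) + s_{h,1}(W,W)
  \geq \tfrac{1}{2}\|\mathcal{H}W\|^2_{\Gamma^+}
  + \lambda_-(\vec N)\|\nabla W\|^2
  + \epsilon\lambda_-(\vec M)\sum_K\|\nabla W_v\|^2_K.
\]

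The remaining and most delicate step is to handle $s_{h,2}(W,W)$, whose consistency cross term $-2\int_{\mathcal{E}_{int}\cup\Gamma_0}\avg{\vec M\nabla W_v}\cdot\jump{\nabla W}_v\,\dS$ has indefinite sign. I would bound it with Cauchy--Schwarz in the $\vec M$-weighted inner product followed by Young's inequality with weight $\eta$, producing $\eta^{-1}\|\sqrt{\vec M}\avg{\nabla W_v}\|^2_{\mathcal{E}_{int}\cup\Gamma_0} + \eta\|\sqrt{\vec M}\jump{\nabla W}_v\|^2_{\mathcal{E}_{int}\cup\Gamma_0}$. A standard discrete trace inequality, valid on shape-regular locally quasi-uniform meshes, then yields $\|\sqrt{\vec M}\avg{\nabla W_v}\|^2_{\mathcal{E}_{int}\cup\Gamma_0}\leq C r^2 \|\mathbf{h}^{-1/2}\sqrt{\vec M}\nabla W_v\|^2_{\leb{2}(\Omega)}$. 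Recalling $\sigma = C_\sigma r^2 \mathbf{h}^{-1}$ and choosing $\eta$ of order $\sigma$ together with $C_\sigma$ large enough (depending on $\epsilon$, $\lambda_\pm(\vec M)$ and the shape-regularity constant), the cross term is absorbed into a fraction of the $\epsilon\lambda_-(\vec M)\sum_K\|\nabla W_v\|^2_K$ obtained above and a fraction of $\sigma\|\sqrt{\vec M}\jump{\nabla W}_v\|^2_{\mathcal{E}_{int}}$, with the remaining penalty supplying the last term of $\tnorm{W}^2$. The main obstacle is tuning $C_\sigma$ so that both absorptions close with a strictly positive remainder $C_c$; once this is done, $a_h(W,W)+s_h(W,W)\geq C_c\tnorm{W}^2$ follows by collecting terms.
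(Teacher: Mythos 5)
Your proposal is correct and follows essentially the same route as the paper's proof: elementwise integration by parts of the convection term so that the interior-facet contributions are exactly cancelled by $s_{h,1}$, assembly of the volume terms into the $\vec N$- and $\vec M$-weighted quadratic forms as in Lemma \ref{lem:primal-coerc-alt}, and absorption of the indefinite $s_{h,2}$ cross term into the penalty and the $\epsilon\lambda_-(\vec M)\|\nabla W_v\|^2$ term for $C_\sigma$ large enough. You in fact supply more detail than the paper does on the final absorption step (the Cauchy--Schwarz/Young/discrete trace inequality argument and the nonnegative $\Gamma^-$ residue), which the paper leaves implicit.
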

\begin{proof}
From the definition of $a_h(\cdot, \cdot)$ in \eqref{eq:discretebilinear}, we have
  \begin{align}
      a_h(W, W)
      &= \sum_{K\in\mathcal{T}}
  \ltwop{v\mathcal{H}W_x}{\mathcal{H}W}_K
  +
  M_{12}\|W_x\|_{\leb2(K)}^2
  +
  M_{22}\ltwop{W_x}{W_v}_K \\
  &\qquad\qquad+
  \epsilon\|W_v\|_{\leb2(K)}^2
  +
  \epsilon\ltwop{\nabla W_v}{\vec{M}\nabla W_v}_K \nonumber \\
  &= \sum_{K\in\mathcal{T}}
  \ltwop{v\mathcal{H}W_x}{\mathcal{H}W}_K
  +
  \ltwop{\nabla W}{\vec{N}\nabla W}_K
  +
  \epsilon\ltwop{\nabla W_v}{\vec{M}\nabla W_v}_K.
  \end{align}
  Integrating the first term on the right-hand side by parts (elementwise) and using the boundary conditions leads to
  \begin{align}
      \sum_{K\in\mathcal{T}}
  \ltwop{v\mathcal{H}W_x}{\mathcal{H}W}_K &= \frac{1}{2}
      \|\mathcal{H}W\|_{\Gamma^+}^2
      + \frac{1}{2}
      \int_{\mathcal{E}_{int}\cup\Gamma^-}
      \Transpose{(v, 0)} \cdot \jump{M_{11} W_x^2 + 2 M_{12} W_x W_v + M_{22} W_v^2} \, \d s \\
      &= \frac{1}{2}
      \|\mathcal{H}W\|_{\Gamma^+}^2 - s_{h,1}(W, W).
  \end{align}
  Combining all terms then gives
  \begin{align}
      a_h(W, W) + s_h(W, W) &= \frac{1}{2}
      \|\mathcal{H}W\|_{\Gamma^+}^2
      +
  \ltwop{\nabla W}{\vec{N}\nabla W}_K
  +
  \epsilon\ltwop{\nabla W_v}{\vec{M}\nabla W_v}_K \\
  &\quad
      + \sigma\|\sqrt{\vec{M}}\jump{\nabla W}_v\|_{\leb2(\mathcal{E}_{int})}^2
      -
      2 \int_{\mathcal{E}_{int}} \avg{\vec M \nabla W_v} \cdot \jump{\nabla W}_v \, \d s,
  \end{align}
  and the result follows from the assumptions on $\vec{M}$, in the same way as Lemma \ref{lem:primal-coerc-alt}. The stabilisation term ensures that the penalty terms provide
enough control on the jumps across internal facets. Choosing
$\sigma$ large enough guarantees that the negative contributions
from the jumps are dominated, concluding the proof.
\end{proof}

\begin{lemma}[Continuous dependence on problem data]
\label{lem:discrete-stationary-stability}
  Let the conditions of Lemma \ref{lem:discrete-primal-coerc} be
  satisfied, let $C_{\textup{eq},h}^+$ be defined as in Lemma
  \ref{lem:equivalence}, and let $C_c$ be the coercivity constant from
  Lemma \ref{lem:discrete-primal-coerc}. Suppose $U$ solves the
  problem \eqref{eq:spatial-disc}.  Let $\delta_h := C_c
  \qp{C_{\textup{eq}, h}^+}^{-1}$, then,
  \begin{equation}
    \Norm{U}_{\mathcal{H}}
    \leq
    \frac 1 {\delta_h}\Norm{f}_{\mathcal{H}}.
  \end{equation}
\end{lemma}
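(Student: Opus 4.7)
The plan is to mirror the proof of the continuous stability result (Lemma \ref{lem:stationary-stability}), replacing the continuous coercivity and Poincar\'e/norm-equivalence statements by their discrete analogues (Lemma \ref{lem:discrete-primal-coerc} and Lemma \ref{lem:equivalence}, respectively). Concretely, I would test the discrete weak formulation against the solution itself and chain the two inequalities to move from the energy-like bound in $\tnorm{\cdot}$ back to $\Norm{\cdot}_{\mathcal{H}}$.

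First, set $\Phi = U$ in \eqref{eq:spatial-disc} to obtain the energy identity
\begin{equation*}
  a_h(U,U) + s_h(U,U) = \ltwop{f}{U}_{\mathcal{H}}.
\end{equation*}
Applying the discrete coercivity result of Lemma \ref{lem:discrete-primal-coerc} to the left-hand side yields
\begin{equation*}
  C_c \tnorm{U}^2 \leq \ltwop{f}{U}_{\mathcal{H}}.
\end{equation*}

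Next, I would use the discrete norm inequality of Lemma \ref{lem:equivalence}, namely $\Norm{U}_{\mathcal{H}}^2 \leq C_{\textup{eq},h}^+ \tnorm{U}^2$, to transfer control from the (stronger) triple norm to the weaker $\mathcal{H}$-norm. This gives
\begin{equation*}
  \delta_h \Norm{U}_{\mathcal{H}}^2
  = \frac{C_c}{C_{\textup{eq},h}^+} \Norm{U}_{\mathcal{H}}^2
  \leq C_c \tnorm{U}^2
  \leq \ltwop{f}{U}_{\mathcal{H}}.
\end{equation*}

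Finally, the Cauchy--Schwarz inequality applied to the $\mathcal{H}$-inner-product bounds the right-hand side by $\Norm{f}_{\mathcal{H}}\Norm{U}_{\mathcal{H}}$, and dividing through by $\Norm{U}_{\mathcal{H}}$ (assuming the non-trivial case; otherwise the inequality is immediate) concludes the proof. There is no substantive obstacle here — the work has already been done in Lemmas \ref{lem:equivalence} and \ref{lem:discrete-primal-coerc}, and this statement is essentially their corollary, playing the same role for the discrete problem that Lemma \ref{lem:stationary-stability} plays for the continuous one.
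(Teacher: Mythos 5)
Your argument is exactly the paper's proof: test with $\Phi = U$, apply Lemma \ref{lem:discrete-primal-coerc} to get $C_c\tnorm{U}^2 \leq \ltwop{f}{U}_{\mathcal{H}}$, pass to $\delta_h\Norm{U}_{\mathcal{H}}^2$ via Lemma \ref{lem:equivalence}, and finish with Cauchy--Schwarz. Correct and identical in approach.
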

 \begin{proof}
  By Lemma \ref{lem:equivalence} and Lemma \ref{lem:discrete-primal-coerc}
  we have that
  \begin{equation}
    \delta_h \Norm{U}_{\mathcal H}^2
    \leq
    C_c \tnorm{U}^2
    \leq
    a_h(U,U) + s_h(U,U)
    =
    \ltwop{f}{U}_{\mathcal H}.
  \end{equation}
  An application of the Cauchy-Schwarz inequality then concludes the proof.
\end{proof}
This allows us to show a C\'ea result for the problem. 
\begin{lemma}[C\'ea's Lemma for steady state Kolmogorov]
\label{lem:cea}
Let $u \in \sobhsubsc{2}{0, \Gamma_0^-}(\Omega)\cap\sobh{3}(\Omega, \mathcal T)$
solve (\ref{eq:stationary-weak}) and let $U \in \fes^-_0$ solve the
discrete problem (\ref{eq:spatial-disc}). Then,
\begin{equation}
  \tnorm{u - U} \leq \qp{1+\frac{C_{\textup{eq}, h}^+}{C_c}} \inf_{\Phi \in \fes^-_0} \tnorm{u - \Phi}.
\end{equation}
\end{lemma}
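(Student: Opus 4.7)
The plan is to follow the classical C\'ea argument, adapted to the hypocoercive discrete setting. The three ingredients required are consistency of the scheme (yielding Galerkin orthogonality), discrete coercivity (already established in Lemma \ref{lem:discrete-primal-coerc}), and a continuity estimate for $a_h + s_h$ measured in the triple norm $\tnorm{\cdot}$. To start, I would establish Galerkin orthogonality: since $u \in \sobhsubsc{2}{0,\Gamma_0^-}(\Omega)\cap\sobh{3}(\Omega, \mathcal T)$, the consistency assumption on $s_h$ gives $a_h(u,\Phi) + s_h(u,\Phi) = a(u,\Phi) = \ltwop{f}{\Phi}_{\mathcal H}$ for every $\Phi \in \fes^-_0$, while the discrete problem \eqref{eq:spatial-disc} supplies $a_h(U,\Phi) + s_h(U,\Phi) = \ltwop{f}{\Phi}_{\mathcal H}$. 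Subtracting yields
\begin{equation}
  a_h(u - U, \Phi) + s_h(u - U, \Phi) = 0 \qquad \forall \Phi \in \fes^-_0.
\end{equation}

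Next, for arbitrary $\Phi \in \fes^-_0$, I would split the error as $u - U = (u - \Phi) + (\Phi - U)$, noting that the second summand lies in $\fes^-_0$. Applying Lemma \ref{lem:discrete-primal-coerc} to $\Phi - U$ and then invoking Galerkin orthogonality with the admissible test function $\Phi - U$ reduces the task to
\begin{equation}
  C_c \tnorm{\Phi - U}^2 \leq (a_h + s_h)(\Phi - U, \Phi - U) = (a_h + s_h)(\Phi - u, \Phi - U).
\end{equation}

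The main obstacle is the remaining continuity estimate $(a_h + s_h)(v,w) \leq C_{\textup{eq},h}^+ \tnorm{v}\tnorm{w}$. I would bound each term of $a_h$ elementwise by Cauchy-Schwarz, using $|v| \leq 1$ to absorb the transport coefficient, with the stabilisation handled by its assumed boundedness. The delicate point is the zeroth-order piece in $a_h(W,\Phi)$, namely $\ltwop{v W_x}{\Phi}$, whose second factor is not directly controlled by the gradient contributions appearing in $\tnorm{\Phi}$; for this I would invoke Lemma \ref{lem:equivalence} to bound $\|\Phi\|_{\mathcal H} \leq \sqrt{C_{\textup{eq},h}^+}\tnorm{\Phi}$, and analogously for the first argument. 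This Poincar\'e-style inequality is precisely the source of the constant $C_{\textup{eq},h}^+$.

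Combining the coercivity bound with the continuity estimate yields $\tnorm{\Phi - U} \leq (C_{\textup{eq},h}^+/C_c)\tnorm{u - \Phi}$, and the triangle inequality then produces $\tnorm{u - U} \leq (1 + C_{\textup{eq},h}^+/C_c)\tnorm{u - \Phi}$. Taking the infimum over $\Phi \in \fes^-_0$ concludes the argument.
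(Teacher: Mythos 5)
Your proposal is correct and follows essentially the same route as the paper's proof: Galerkin orthogonality from consistency, discrete coercivity from Lemma \ref{lem:discrete-primal-coerc} applied to the discrete part of the error, boundedness of $a_h + s_h$ with constant $C_{\textup{eq},h}^+$, and a triangle inequality. The only difference is cosmetic (you expand on how the continuity bound would be verified term by term, where the paper simply asserts boundedness), so no further comparison is needed.
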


\begin{proof}
  Using the coercivity result from Lemma
\ref{lem:discrete-primal-coerc}, we have for any $\Phi \in \fes^-_0$
\begin{equation}
  \begin{split}
    C_c \tnorm{U - \Phi}^2
    &\leq
    a_h(U-\Phi, U-\Phi) + s_h(U-\Phi, U-\Phi)
    \\
    &\leq
    a_h(U-u, U-\Phi) + s_h(U-u , U-\Phi)
    +
    a_h(u-\Phi, U-\Phi) + s_h(u-\Phi , U-\Phi).
  \end{split}
\end{equation}
As the discrete formulation is consistent we have Galerkin
orthogonality which then implies
\begin{equation}
    C_c \tnorm{U - \Phi}^2
    \leq
    a_h(u-\Phi, U-\Phi) + s_h(u-\Phi , U-\Phi),
\end{equation}
for any $\Phi \in \fes^-_0$. Using the boundedness of $a_h(\cdot,
\cdot)$ and $s_h(\cdot, \cdot)$, there exists a constant
$C_{\textup{eq}, h}^+$ such that
\begin{equation}
    C_c \tnorm{U-\Phi}^2 \leq C_{\textup{eq}, h}^+ \tnorm{u - \Phi} \tnorm{U-\Phi}.
\end{equation}
Dividing through by $\tnorm{U-\Phi}$, minimising over $\Phi \in
\fes^-_0$ and the triangle inequality then yields the desired result.
\end{proof}

\begin{corollary}[Best approximation for steady state Kolmogorov]
\label{cor:best-approx}
Under the conditions of Lemma \ref{lem:cea}, let $u \in
\sobhsubsc{2}{0, \Gamma_0^-}(\Omega) \cap \sobh{s}(\Omega)$ and $U \in
\fes^-_0$ solve the continuous and discrete problems,
respectively. Then,
\begin{equation}
  \tnorm{u - U}
  \leq
  \delta_h C_{\textup{approx}} h^{\min(r-1, s-2)} \norm{u}_{\sobh{s}(\Omega)},
\end{equation}
where $r$ is the degree of the polynomial basis used in $\fes^-_0$ and
$C_{\textup{approx}}$ is an approximation constant depending on the
mesh regularity.
\end{corollary}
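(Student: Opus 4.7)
The plan is to combine C\'ea's Lemma (Lemma \ref{lem:cea}) with a standard quasi-interpolation estimate to bound $\tnorm{u-U}$ in terms of a concrete interpolant, whose error then admits Bramble--Hilbert-type bounds in each constituent piece of the triple norm.

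First I would invoke Lemma \ref{lem:cea} to reduce matters to estimating $\inf_{\Phi\in\fes^-_0}\tnorm{u-\Phi}$. In place of the infimum, I would take $\Phi = \mathcal{I}_h u$ for a Scott--Zhang-type quasi-interpolant $\mathcal{I}_h$ mapping into $\fes^-_0$, respecting the homogeneous boundary data on $\Gamma_0^-$, and satisfying the standard local estimate
\begin{equation}
\|u - \mathcal{I}_h u\|_{\sobh{m}(K)} \le C\, h_K^{\min(r+1,s)-m}\|u\|_{\sobh{s}(\omega_K)}
\end{equation}
for each $K \in \mathcal{T}$ and $0 \le m \le r+1$, on a local patch $\omega_K$ around $K$.

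Next I would bound each of the three ingredients of $\tnorm{u-\mathcal{I}_h u}^2$ separately. The volume-type gradient terms $\|\nabla(u-\mathcal{I}_h u)\|_{\leb{2}(K)}$ and $\|\nabla(u-\mathcal{I}_h u)_v\|_{\leb{2}(K)}$ follow directly from the above with $m=1$ and $m=2$, producing the target rate $h^{\min(r-1,s-2)}$. The boundary trace $\|\mathcal{H}(u-\mathcal{I}_h u)\|_{\Gamma^+}$ is handled via the multiplicative trace inequality combined with the $\sobh{1}$ and $\sobh{2}$ interpolation bounds on the elements touching $\Gamma^+$. For the jump term, since $u\in\sobh{2}(\Omega)$ the traces of $\nabla u$ agree across interior facets, so $\jump{\nabla u}_v = 0$ and the quantity $\jump{\nabla(u-\mathcal{I}_h u)}_v$ reduces to $-\jump{\nabla \mathcal{I}_h u}_v$, which can be controlled element by element through the scaled trace inequality.

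Summing the local contributions and invoking shape regularity then yields the advertised bound, with the constants $\delta_h$ and $C_{\textup{approx}}$ absorbing the coercivity and equivalence constants appearing in Lemma \ref{lem:cea} together with trace and shape-regularity constants. The main obstacle is the careful treatment of the jump-stabilisation contribution: the weight $\sigma\sim h^{-1}$ must be compensated by a sharp scaled trace inequality applied to $u-\mathcal{I}_h u$, and the $\sqrt{\vec M}$ weighting forces an extra dependence on $\lambda_+(\vec M)^{1/2}$ that needs to be tracked. Beyond this, no step is technically deep, but the bookkeeping of how the factor $\delta_h$ enters from the constant $1 + C_{\textup{eq},h}^+/C_c$ in C\'ea's bound requires attention.
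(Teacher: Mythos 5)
Your proposal is correct and follows the route the paper intends: the corollary is stated without a printed proof, as the immediate consequence of C\'ea's Lemma \ref{lem:cea} combined with standard (quasi-)interpolation estimates for each component of $\tnorm{\cdot}$, which is exactly what you carry out, including the right treatment of the $\sigma\sim h^{-1}$-weighted jump term via scaled trace inequalities. Your closing caveat about the constant is well placed: since the C\'ea constant is $1 + C_{\textup{eq},h}^+/C_c = 1 + \delta_h^{-1}$, the prefactor $\delta_h$ in the corollary as stated appears to be a slip for $1+\delta_h^{-1}$, but this concerns the statement rather than your argument.
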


\subsection{Spatial semi-discretisation of the time-dependent Kolmogorov}

The above method extends to the time-dependent case in a
straightforward manner via the semi-discretisation: Seek $U : [0, T]
\longrightarrow \fes^-_0$ such that, for a.e. $t \in (0, T]$,
\begin{equation}
\label{eq:primal-semi-disc}
    \begin{split}
      \ltwop{U_t}{\Phi}_{\mathcal{H}}
      +
      a_h(U, \Phi)
      +
      s_h(U, \Phi)
      &=
      \ltwop{f}{\Phi}_{\mathcal{H}} \qquad \Foreach \Phi \in \fes^-_0, \\
      U(0) &= \Pi u_0,
    \end{split}
\end{equation}
where $\Pi : \leb2(\Omega)\longrightarrow\fes^-_0$ denotes the
$\leb2$-projection onto the finite element space.

The following stability result follows the main argument of
Proposition \ref{prop:primal-stability-alt}, with discrete objects replacing
their continuous counterparts.

\begin{corollary}[Stability of the semi-discrete time-dependent problem]
\label{cor:semi-disc-stability}
Let the conditions of Lemma \ref{lem:discrete-primal-coerc} be
satisfied, and with $\delta_h = C_c \qp{C_{\textup{eq},
    h}^+}^{-1}$. Suppose $U\in\fes^-_0$ solves
\eqref{eq:primal-semi-disc}. Then
\begin{equation}
    \Norm{U(T)}^2_{\mathcal{H}}
    \leq
    \exp\left(-\delta_h T\right)
    \Norm{\Pi u_0}^2_{\mathcal{H}}
    +
    \frac{1}{\delta_h}\int_0^T \exp\left(-\delta_h\qp{T-t}\right)
    \Norm{f(t)}_{\mathcal{H}}^2 \, \d t.
\end{equation}
\end{corollary}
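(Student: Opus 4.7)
The plan is to mirror the argument used in Proposition \ref{prop:primal-stability-alt} at the semi-discrete level, using the discrete coercivity and norm-equivalence estimates established in Lemmas \ref{lem:equivalence} and \ref{lem:discrete-primal-coerc} in place of their continuous counterparts.

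First I would test the semi-discrete equation \eqref{eq:primal-semi-disc} with $\Phi = U$, giving
\begin{equation}
\tfrac{1}{2}\tfrac{\d}{\d t}\Norm{U}_{\mathcal{H}}^2 + a_h(U,U) + s_h(U,U) = \ltwop{f}{U}_{\mathcal{H}}.
\end{equation}
By Lemma \ref{lem:discrete-primal-coerc}, the bilinear form plus stabilisation is bounded below by $C_c\tnorm{U}^2$, while Lemma \ref{lem:equivalence} gives $\Norm{U}_{\mathcal{H}}^2 \leq C_{\textup{eq},h}^+ \tnorm{U}^2$. Combining these yields
\begin{equation}
\delta_h\Norm{U}_{\mathcal{H}}^2 \leq a_h(U,U) + s_h(U,U),
\end{equation}
with $\delta_h = C_c(C_{\textup{eq},h}^+)^{-1}$, exactly playing the role that $\delta$ plays in the continuous setting.

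Next I would bound the right-hand side using Cauchy--Schwarz followed by Young's inequality with parameter $\delta_h$, namely $\ltwop{f}{U}_{\mathcal{H}} \leq \tfrac{1}{2\delta_h}\Norm{f}_{\mathcal{H}}^2 + \tfrac{\delta_h}{2}\Norm{U}_{\mathcal{H}}^2$. Assembling the pieces gives the discrete analogue of \eqref{eq:primal-delta-bound-ineq}:
\begin{equation}
\tfrac{\d}{\d t}\Norm{U}_{\mathcal{H}}^2 + \delta_h\Norm{U}_{\mathcal{H}}^2 \leq \tfrac{1}{\delta_h}\Norm{f}_{\mathcal{H}}^2.
\end{equation}
A standard integrating-factor argument (Gr\"onwall's inequality) applied to $\exp(\delta_h t)\Norm{U(t)}_{\mathcal{H}}^2$ over $[0,T]$, together with the initial condition $U(0) = \Pi u_0$, then delivers the stated bound.

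There is no real obstacle here: the argument is essentially the continuous proof transcribed with discrete objects, and the assumption that $\vec{M}$ is strictly positive definite (inherited via Lemma \ref{lem:discrete-primal-coerc}) is what guarantees $\delta_h > 0$ and hence the exponential decay factor. The only minor point to be careful about is that coercivity is phrased in the discrete hypocoercive norm $\tnorm{\cdot}$ while the time-derivative term produces the $\mathcal{H}$-norm, which is precisely why the norm-equivalence estimate of Lemma \ref{lem:equivalence} must be invoked before the Young's inequality step, and why the constant in the final bound naturally takes the form $\delta_h = C_c/C_{\textup{eq},h}^+$.
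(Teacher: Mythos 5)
Your proof is correct and is exactly the argument the paper intends: the paper gives no separate proof of this corollary, stating only that it ``follows the main argument of Proposition \ref{prop:primal-stability-alt}, with discrete objects replacing their continuous counterparts,'' which is precisely what you have carried out (testing with $\Phi = U$, invoking Lemmas \ref{lem:discrete-primal-coerc} and \ref{lem:equivalence} to obtain $\delta_h \Norm{U}_{\mathcal{H}}^2 \leq a_h(U,U)+s_h(U,U)$, then Cauchy--Schwarz, Young and Gr\"onwall). Your closing remark about why the norm equivalence must mediate between $\tnorm{\cdot}$ and $\Norm{\cdot}_{\mathcal{H}}$ correctly identifies the one point where the discrete argument differs structurally from the continuous one.
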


\begin{theorem}[{\cite[Theorem 5.6]{georgoulis2020hypocoercivitycompatible}}]
\label{theorem:georgoulis-56}
Let $u$ be the solution of (\ref{eq:Kolmogorov Hypo}). Assume that
$u_0, u, u_t \in \sobh{2}_{0, \Gamma_0^-}(\Omega) \cap
\sobh{s}(\Omega)$ for $s \geq 3$. Under the assumptions of Lemma
\ref{lem:discrete-primal-coerc}, the error $e := u - U$ of the finite
element method (\ref{eq:primal-semi-disc}) satisfies the bound
\begin{equation}
    \Norm{e(\tau)}_{\mathcal{H}}^2
    +
    \int_0^\tau
    \tnorm{e(t)}^2 \, \mathrm{d}t
    \leq
    C h^{2\min(s-2, r-1)}
    \Norm{u}_{H^1(0, \tau; \sobh{s}(\Omega))}^2,
\end{equation}
where $C$ is a constant independent of $h$, $t$, and
$\tau$ but depending on the regularity of $u$ and $u_t$.
\end{theorem}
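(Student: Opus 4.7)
My plan is to adapt the classical Wheeler--Thom\'ee elliptic projection technique to the hypocoercive finite element framework. First, I would define the elliptic projection $\mathcal R:\sobhsubsc{2}{0,\Gamma_0^-}(\Omega)\cap \sobh{3}(\Omega,\mathcal T)\to\fes_0^-$ by requiring
\begin{equation}
a_h(\mathcal R w,\Phi)+s_h(\mathcal R w,\Phi)=a_h(w,\Phi)+s_h(w,\Phi)\Foreach \Phi\in\fes_0^-.
\end{equation}
Existence and uniqueness of $\mathcal R w$ follow from Lemma \ref{lem:discrete-primal-coerc}, and the linearity of $a_h$ and $s_h$ forces $(\mathcal R u)_t=\mathcal R u_t$. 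The C\'ea bound in Lemma \ref{lem:cea} combined with Corollary \ref{cor:best-approx} yields, for any $w\in\sobhsubsc{2}{0,\Gamma_0^-}(\Omega)\cap\sobh{s}(\Omega)$,
\begin{equation}
\tnorm{w-\mathcal R w}\leq Ch^{\min(r-1,s-2)}\Norm{w}_{\sobh{s}(\Omega)}.
\end{equation}

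Next I would decompose $e=\rho+\theta$ with $\rho:=u-\mathcal R u$ and $\theta:=\mathcal R u-U\in\fes_0^-$. Subtracting the semi-discrete scheme \eqref{eq:primal-semi-disc} from the consistent weak form of \eqref{eq:Kolmogorov Hypo} gives
\begin{equation}
\ltwop{e_t}{\Phi}_{\mathcal H}+a_h(e,\Phi)+s_h(e,\Phi)=0\Foreach\Phi\in\fes_0^-,
\end{equation}
which, using the defining identity of $\mathcal R$, collapses to
\begin{equation}
\ltwop{\theta_t}{\Phi}_{\mathcal H}+a_h(\theta,\Phi)+s_h(\theta,\Phi)=-\ltwop{\rho_t}{\Phi}_{\mathcal H}\Foreach \Phi\in\fes_0^-.
\end{equation}
Testing with $\Phi=\theta$ and invoking Lemma \ref{lem:discrete-primal-coerc} together with Cauchy--Schwarz and Young's inequality (with parameter chosen so that the residual $\varepsilon\Norm{\theta}_{\mathcal H}^2$ term is absorbed into $C_c\tnorm{\theta}^2$ via Lemma \ref{lem:equivalence}) produces
\begin{equation}
\frac{\d}{\d t}\Norm{\theta}_{\mathcal H}^2+C_c\tnorm{\theta}^2\leq C\Norm{\rho_t}_{\mathcal H}^2.
\end{equation}
Integration from $0$ to $\tau$ then controls $\Norm{\theta(\tau)}_{\mathcal H}^2+\int_0^\tau\tnorm{\theta(t)}^2\,\d t$ by $\Norm{\theta(0)}_{\mathcal H}^2+\int_0^\tau\Norm{\rho_t(t)}_{\mathcal H}^2\,\d t$.

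To close the argument, I would bound the initial error $\theta(0)=\mathcal R u_0-\Pi u_0$ by the triangle inequality through $u_0$, using the approximation properties of $\mathcal R$ and of the $L^2$-projection $\Pi$. For $\rho$ and $\rho_t$ I would introduce a quasi-interpolant $I_h w\in\fes_0^-$ with standard elementwise approximation, note that $\mathcal R I_h w = I_h w$, and write $\rho = (I-\mathcal R)(u-I_h u)$ and $\rho_t = (I-\mathcal R)(u_t-I_h u_t)$. The $\mathcal H$-norm of the $\mathcal R$-part is estimated by applying Lemma \ref{lem:equivalence} followed by $\tnorm{\cdot}$-stability of $\mathcal R$, the latter being an immediate consequence of Lemma \ref{lem:discrete-primal-coerc} and the boundedness of $a_h+s_h$. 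Combining with the triangle inequalities $\Norm{e}_{\mathcal H}\leq\Norm{\rho}_{\mathcal H}+\Norm{\theta}_{\mathcal H}$ and $\tnorm{e}\leq\tnorm{\rho}+\tnorm{\theta}$, and inserting the approximation bounds, produces the claim.

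The principal obstacle is obtaining the correct-order bound on $\Norm{\rho_t}_{\mathcal H}$. Lemma \ref{lem:equivalence} supplies only one-sided equivalence between $\Norm{\cdot}_{\mathcal H}$ and $\tnorm{\cdot}$, and because $a_h+s_h$ is nonsymmetric owing to the convective term $v\mathcal Hw_x$ a clean Aubin--Nitsche duality argument is unavailable. The quasi-interpolant splitting sketched above circumvents this, but verifying that the resulting constant is independent of $h$, $t$, and $\tau$ while preserving the order $\min(r-1,s-2)$ requires careful bookkeeping of the elementwise interpolation estimates and of the constant $C_{\textup{eq},h}^+$, together with control of the jump seminorm in $\tnorm{\cdot}$ through the stabilisation assumptions on $s_h$.
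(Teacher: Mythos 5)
Your proposal follows essentially the same route as the paper: the Ritz projection defined by the stationary discrete problem, the splitting $e=\theta+\rho$, the error equation $\ltwop{\theta_t}{\Phi}_{\mathcal H}+a_h(\theta,\Phi)+s_h(\theta,\Phi)=-\ltwop{\rho_t}{\Phi}_{\mathcal H}$, the energy/Gr\"onwall argument on $\theta$, and the best-approximation bounds from Corollary \ref{cor:best-approx} for $\rho$, $\rho_t$ and the initial datum. The only divergence is your quasi-interpolant detour for $\Norm{\rho_t}_{\mathcal H}$; the paper simply invokes Lemma \ref{lem:equivalence} together with Corollary \ref{cor:best-approx} at that step, so your extra care addresses a point the paper treats more briefly but does not change the argument.
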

\begin{proof}
  For the sake of this proof we denote $u$ the solution of the
  temporal problem (\ref{eq:Kolmogorov Hypo}), $U = U(t)$ the solution
  of the semi discrete approximation (\ref{eq:primal-semi-disc}) and
  $R u$ the steady state approximation given by
  (\ref{eq:spatial-disc}). This is since the steady state
  approximation is exactly the Ritz projection. This allows us to
  decouple the error into two parts, analogous to parabolic and
  elliptic components in, for example \cite[\S 1]{Thomee:2007}. To
  that end, let us denote 
  \begin{equation}
    e(t) = (U - u)(t) = \theta(t) + \rho(t)
    :=
    (U - R u)(t) + (R u - u)(t).
  \end{equation}
  By definition we have, for any $\Phi \in \fes^-_0$,
  \begin{equation}
    \begin{split}
      \ltwop{\theta_t}{\Phi}_{\mathcal{H}}
      +
      a_h(\theta, \Phi)
      +
      s_h(\theta, \Phi)
      &=
      \ltwop{f}{\Phi}_{\mathcal{H}}
      -
      \ltwop{(R u)_t}{\Phi}_{\mathcal{H}}
      -
      a_h(R u, \Phi)
      -
      s_h(R u, \Phi)
      \\
      &=
      \ltwop{(u - R u)_t}{\Phi}_{\mathcal{H}}
      \\
      &=
      -\ltwop{\rho_t}{\Phi}_{\mathcal{H}}.
    \end{split}
  \end{equation}
  Now taking $\Phi = \theta$ and using energy arguments we have
  \begin{equation}
    \frac 12 \ddt \Norm{\theta}_{\mathcal{H}}^2
    +
    C_c\tnorm{\theta}^2
    \leq
    \Norm{\rho_t}_{\mathcal{H}}\Norm{\theta}_{\mathcal{H}}.    
  \end{equation}
  Hence Gr\"onwall's inequality yields
  \begin{equation}
    \Norm{\theta(t)}_{\mathcal{H}}
    \leq
    \exp\qp{-2 \delta_h t}\Norm{U(0) - R u(0)}_{\mathcal{H}}
    +
    2 \int_0^t \exp\qp{-2 \delta_h (t - \tau)} \Norm{\rho_t(\tau)}_{\mathcal{H}}
    \d\tau.
  \end{equation}
  Using Lemma \ref{lem:equivalence} and Corollary
  \ref{cor:best-approx} we have for $u_t\in{\sobh{s}(\Omega)}$
  \begin{equation}
    \Norm{\rho_t}_{\mathcal{H}}
    =
    \Norm{(Ru - u)_t}_{\mathcal{H}}
    \leq
    C h^{\min(r-1, s-2)} \Norm{u_t}_{\sobh{s}(\Omega)},
  \end{equation}
  and
  \begin{equation}
    \Norm{\rho(t)}_{\mathcal{H}}
    =
    \Norm{R u - u}_{\mathcal{H}} \leq C h^{\min(r-1, s-2)} \Norm{u}_{\sobh{s}(\Omega)}.
  \end{equation}
  The initial condition can be bounded as
  \begin{equation}
    \Norm{U(0) - R u(0)}_{\mathcal{H}} \leq C h^{\min(r-1, s-2)} \Norm{u_0}_{\sobh{s}(\Omega)}.
  \end{equation}  
  Hence combining these bounds, we obtain the total error
  \begin{equation}
    \begin{split}
      \Norm{e(t)}_{\mathcal{H}}
      &\leq
      \Norm{\theta(t)}_{\mathcal{H}}
      +
      \Norm{\rho(t)}_{\mathcal{H}}
      \\
      &\leq C h^{\min(r-1, s-2)} \left(
      \exp(-2 \delta_h t) \Norm{u_0}_{\sobh{s}(\Omega)}     
      +
      \Norm{u(t)}_{\sobh{s}(\Omega)}
      +
      \int_0^t \exp(-2 \delta_h (t - \tau)) \Norm{u_t(\tau)}_{\sobh{s}(\Omega)} \d\tau
      \right).
    \end{split}
  \end{equation}
  The $\tnorm{e}$ term is treated similarly, concluding the proof. 
\end{proof}

\subsection{Stationary Kolmogorov constrained optimal control}

In this section, we extend the results of
\cite{georgoulis2020hypocoercivitycompatible} to the optimal control
system described in \S\ref{sec:OC}. With the cost functional defined
as
\begin{equation}
  E(U, F)
  :=
  \frac{1}{2} \Norm{U - \mathcal{D}}_{\mathcal{H}}^2
  +
  \frac{\alpha}{2} \Norm{F}_{\mathcal{H}}^2,
\end{equation}
we seek \( \qp{U^*, F^*} \in \fes^-_0 \times \fes \) such that
\begin{equation}
  \label{eq:discrete-oc}
  \qp{U^*, F^*}
  = \argmin_{\qp{U, F} \in \fes^-_0 \times \fes} E(U, F)
\end{equation}
subject to the constraint
\begin{equation}
  \label{eq:discrete-constraint}
  a_h(U, \Phi) + s_h(U, \Phi)
  =
  \ltwop{F}{\Phi}_{\mathcal{H}} \quad \Foreach \Phi \in \fes^-_0.
\end{equation}
We define the adjoint bilinear form \( a_h^*(Z, \Psi) \) as
\begin{equation}
  \label{eq:discretebilineardual}
  \begin{split}
  a_h^*(Z, \Psi)
  :=
  a_h(\Psi, Z)
  &=
  \sum_{K \in \mathcal{T}}
  \ltwop{v\Psi_x}{Z}_K
  +
  \ltwop{\nabla(v \Psi_x)}{\vec M \nabla Z}_K
  +
  \epsilon \ltwop{Z_v}{\Psi_v}_K
  +
  \epsilon \ltwop{\nabla Z_v}{\vec M \nabla \Psi_v} \\
  &= \sum_{K\in\mathcal{T}}
  \ltwop{v\mathcal{H}\Psi_x}{\mathcal{H}Z}_K
  +
  \ltwop{\Psi_x}{M_{12}Z_x + M_{22}Z_v}_K
  +
  \epsilon\ltwop{\Psi_v}{Z_v}_K
  +
  \epsilon\ltwop{\nabla \Psi_v}{\vec{M}\nabla Z_v}_K.
  \end{split}
\end{equation}
As in the primal case, there is flexibility in the choice of the
adjoint stabilisation $s_h^*(\cdot, \cdot)$. 
\begin{remark}[Consistent adjoint stabilisation]
  An example of a consistent adjoint stabilisation is given by
  \begin{equation}
    \label{eq:dualstab}
    \begin{split}
       s_h^*(z, \psi)
       &:=
       -
       \int_{\mathcal{E}_{int} \cup \Gamma_+}
       (v, 0)^T \cdot
       \qp{M_{11} \jump{\psi_x} \avg{z_x}
         +
         M_{12} \jump{\psi_x} \avg{z_v}
         +
         M_{12} \jump{\psi_v} \avg{z_x}
         +
         M_{22} \jump{\psi_v} \avg{z_v}}
       \, \d s
       \\
       &\quad
       -\int_{\mathcal{E}_{int} \cup \Gamma_0}
       \avg{\vec M \nabla z_v} \cdot \jump{\nabla \psi}_v        
       +
       \avg{\vec M \nabla \psi_v} \cdot \jump{\nabla z}_v
       -
       \sigma
       \jump{\nabla z}_v \cdot \vec M \jump{\nabla \psi}_v \, \d s,
    \end{split}
  \end{equation}
  where $\sigma = C_\sigma r^2 {\mathbf{h}}^{-1}$ for some constant
  $C_\sigma >0$, independent of $h$ and $r$. It is important to note
  that in the dual problem, the roles of inflow and outflow are
  reversed both within the finite element space and the stabilisation.
\end{remark}

\begin{assumption}[The dual stabilisation choice]
  As in the primal case, we do not impose any specific choice of
  adjoint stabilisation; however, in the numerical experiments, we use
  (\ref{eq:dualstab}). For the analysis,
  it is sufficient to assume three things:
  \begin{itemize}
  \item
    The stabilisation is consistent. That is, for all $z \in
    \sobhsubsc{2}{0, \Gamma_0^+}(\Omega)\cap \sobh{3}(\Omega, \mathcal
    T)$ we have
    \begin{equation}
      a_h^*(z, \Phi) + s_h^*(z, \Phi) = a^*(z,\Phi) \Foreach \Phi\in \fes_0^-.
    \end{equation}
  \item The stabilisation is equivalent to the nonconforming component
    of the norm, for any $Z\in\fes_0^+$ that
    \begin{equation}
      c_0 \Norm{\sqrt{\sigma\vec M}\jump{\nabla Z}_v}^2_{\leb{2}(\mathcal{E}_{int} \cup \Gamma^-)}
      \leq
      s_h(Z,Z)
      \leq
      C_0 \Norm{\sqrt{\sigma\vec M}\jump{\nabla Z}_v}^2_{\leb{2}(\mathcal{E}_{int} \cup \Gamma_0)}.    
    \end{equation}
    \item The stabilisation is bounded, for any $Z,\Psi\in\fes_0^-$    
    \begin{equation}
      s_h^*(Z,\Psi)
        \leq
        C_{BS}
     \Norm{\sqrt{\sigma\vec M}\jump{\nabla W}_v}_{\leb{2}(\mathcal{E}_{int} \cup \Gamma_0)}
     \Norm{\sqrt{\sigma\vec M}\jump{\nabla W}_v}_{\leb{2}(\mathcal{E}_{int} \cup \Gamma_0)}.
    \end{equation}
      \end{itemize}
\end{assumption}

\begin{lemma}[First order necessary conditions]
  Suppose $\qp{U, Z, F} \in \fes^-_0\times\fes^+_0\times\fes$ solve
  \begin{equation}
    \label{eq:discreteKKT}
    \begin{split}
      a_h(U, \Phi) + s_h(U, \Phi) &= \ltwop{F}{\Phi}_{\mathcal H}, \\
      a_h^*(Z, \Psi) + s_h^*(Z, \Psi) &= \ltwop{\mathcal{D} - U}{\Psi}_{\mathcal H}, \\
      \ltwop{\alpha F - Z}{\Xi}_{\mathcal H} &= 0 \qquad \Foreach \qp{\Phi, \Psi, \Xi} \in \fes^-_0 \times \fes^+_0 \times \fes.
    \end{split}
  \end{equation}
  Then $(U,F)$ are the unique solutions that satisfy
  \eqref{eq:discrete-oc}--\eqref{eq:discrete-constraint}.
\end{lemma}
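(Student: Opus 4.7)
The plan is to repeat, at the discrete level, the Lagrangian argument used in the proof of Lemma \ref{Derivation of the optimal control}, with the discrete bilinear forms $a_h(\cdot, \cdot) + s_h(\cdot, \cdot)$ replacing $a(\cdot, \cdot)$ throughout. The main steps are to form the discrete Lagrangian
\begin{equation}
  L_h(U, F, Z)
  :=
  E(U, F)
  +
  a_h(U, Z) + s_h(U, Z)
  -
  \ltwop{F}{Z}_{\mathcal{H}},
\end{equation}
defined on $\fes_0^- \times \fes \times \fes_0^+$, and then compute its first-order variations with respect to $Z$, $U$, and $F$ in turn.

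Taking the variation in $Z$, tested against arbitrary $\Phi \in \fes_0^-$, returns precisely the discrete primal constraint \eqref{eq:discrete-constraint}. The variation in $F$, tested against $\Xi \in \fes$, returns the discrete control equation $\ltwop{\alpha F - Z}{\Xi}_{\mathcal H} = 0$. The variation in $U$, tested against $\Psi \in \fes_0^+$, produces $\ltwop{U - \mathcal{D}}{\Psi}_{\mathcal{H}} + a_h(\Psi, Z) + s_h(\Psi, Z) = 0$; here we use the identity $a_h(\Psi, Z) = a_h^*(Z, \Psi)$ from \eqref{eq:discretebilineardual} and the corresponding symmetry property of the stabilisation (which we take as an additional consequence of the definitions in \eqref{eq:dualstab}), so that this becomes the discrete adjoint equation in the statement. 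The three stationarity conditions together are exactly \eqref{eq:discreteKKT}.

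For uniqueness and sufficiency of these conditions, the argument is standard convex optimisation on a finite-dimensional affine feasible set. The cost functional $E(U, F)$ is quadratic with a strictly positive definite Hessian in $F$ (from the $\frac{\alpha}{2} \Norm{F}_{\mathcal{H}}^2$ term) and a positive semi-definite Hessian in $U$; the constraint \eqref{eq:discrete-constraint} is affine linear. By Lemma \ref{lem:discrete-primal-coerc}, for each $F \in \fes$ the constraint admits a unique solution $U(F) \in \fes_0^-$, and $F \mapsto U(F)$ is an affine map. Substituting into $E$ yields a strictly convex quadratic functional of $F$ alone, whose unique minimiser is characterised by the three stationarity equations above.

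The only nontrivial point, and the one I would scrutinise most carefully, is the identification of the variation of $s_h(U, Z)$ in $U$ with $s_h^*(Z, \Psi)$; this amounts to verifying the symmetry $s_h(\Psi, Z) = s_h^*(Z, \Psi)$ from the explicit forms \eqref{eq:stab} and \eqref{eq:dualstab}, where the interchange of inflow and outflow boundary contributions is consistent with the fact that taking variations reverses the role of trial and test functions. Once this is in hand, the rest is bookkeeping that tracks the continuous proof line-for-line.
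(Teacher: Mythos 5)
Your proposal is correct and follows essentially the same route as the paper: the same discrete Lagrangian $L_h(U,F,Z) = E(U,F) + a_h(U,Z) + s_h(U,Z) - \ltwop{F}{Z}_{\mathcal H}$, the same three first-order variations, and the same quadratic-cost/linear-constraint argument for uniqueness. Your extra care in flagging the identification of the $U$-variation of $s_h(U,Z)$ with $s_h^*(Z,\Psi)$ is a point the paper passes over silently, and your reduction to a strictly convex functional of $F$ alone is a slightly more explicit version of the paper's closing remark, but neither changes the substance of the argument.
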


\begin{proof}
  As in Lemma \ref{Derivation of the optimal control}, we observe that
  system \eqref{eq:discreteKKT} corresponds to the KKT conditions of
  the Lagrangian
  \begin{equation}
    L_h(U, F, Z) := E(U, F) + a_h(U, Z) + s_h(U, Z) - \ltwop{F}{Z}_{\mathcal H}.
  \end{equation}
  We compute the first variations of the Lagrangian as
  \begin{align}
    \frac{\delta L_h}{\delta Z}
    &=
    a_h(U, \Phi) + s_h(U, \Phi) - \ltwop{F}{\Phi}_{\mathcal H}, \label{eq:discrete-primal-lagrangian} \\
    \frac{\delta L_h}{\delta U}
    &=
    \ltwop{U - \mathcal{D}}{\Psi}_{\mathcal H} + a_h^*(Z, \Psi) + s_h^*(Z, \Psi), \label{eq:discrete-dual-lagrangian} \\
    \frac{\delta L_h}{\delta F}
    &=
    \ltwop{\alpha F - Z}{\Xi}_{\mathcal H}. \label{eq:discrete-control-lagrangian}
  \end{align}
  The function spaces $\fes^-_0 \times \fes^+_0 \times \fes$ are
  finite-dimensional and closed, ensuring that the extrema of the
  Lagrangian occur at the kernel of its first-order derivatives. Since
  the cost functional is quadratic and the constraints are linear, the
  system of KKT conditions \eqref{eq:discreteKKT} corresponds to the
  unique minimisers, completing the proof.
\end{proof}

For $Z\in\sobh{3}(\Omega, \mathcal{T})$, the discrete dual norm is
defined by
\begin{equation}
  \tnorm{Z}_*^2 := \|\mathcal{H}Z\|_{\Gamma^-}^2 + \sum_{K\in\mathcal{T}}\left(\|\nabla Z\|_{\leb2(K)}^2 + \|\nabla Z_v\|_{\leb2(K)}^2\right) + \sigma\|\sqrt{\vec{M}}\jump{\nabla Z}_v\|_{\leb2(\mathcal{E}_{int})}^2,
\end{equation}
under which the discrete adjoint problem is coercive. 
\begin{lemma}[Discrete dual norm equivalence]
  \label{lem:dual-equivalence}
  Let $Z\in\fes^+$, let $C_{\textup{eq},h}^+$ be defined as in Lemma
  \ref{lem:equivalence}. Then
  \begin{equation}
    \Norm{Z}_{\mathcal{H}}^2
    \leq
    C_{\textup{eq},h}^+ \tnorm{Z}_*^2.
  \end{equation}
\end{lemma}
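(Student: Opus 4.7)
The plan is to mirror the proof of Lemma \ref{lem:equivalence} line-for-line, since the dual norm $\tnorm{\cdot}_*$ differs from $\tnorm{\cdot}$ only in the boundary contribution on $\Gamma^\pm$, whereas the interior-based quantities $\sum_K \Norm{\nabla Z}_{\leb2(K)}^2$ are identical. Since the bound only uses the interior piecewise $\leb2$ norm of $\nabla Z$, the boundary term plays no role.

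I would start by applying the eigenvalue bound \eqref{eq:bound-grad-term} to estimate
\begin{equation}
    \Norm{Z}_{\mathcal{H}}^2
    \leq
    \Norm{Z}_{\leb2(\Omega)}^2
    +
    \lambda_+\qp{\vec M}\Norm{\nabla Z}_{\leb2(\Omega)}^2.
\end{equation}
Next, since $Z \in \fes^+_0$ vanishes on $\Gamma_0^+ = \Gamma^+ \cup \Gamma_0$, which has positive $1$-dimensional Hausdorff measure, the Poincar\'e inequality applies and yields $\Norm{Z}_{\leb2(\Omega)}^2 \leq C_p^2 \Norm{\nabla Z}_{\leb2(\Omega)}^2$, so
\begin{equation}
    \Norm{Z}_{\mathcal{H}}^2
    \leq
    \qp{C_p^2 + \lambda_+\qp{\vec M}}\Norm{\nabla Z}_{\leb2(\Omega)}^2
    =
    C_{\textup{eq},h}^+ \Norm{\nabla Z}_{\leb2(\Omega)}^2.
\end{equation}
Finally, since $Z$ is continuous across facets (as $\fes^+_0 \subset \sobh1(\Omega)$), we have $\Norm{\nabla Z}_{\leb2(\Omega)}^2 = \sum_{K\in\mathcal{T}}\Norm{\nabla Z}_{\leb2(K)}^2$, and this sum is dominated by $\tnorm{Z}_*^2$ by definition, completing the argument.

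There is essentially no substantive obstacle: the proof is a direct transcription of the primal version. The only mild subtlety is that the Poincar\'e constant here is associated with vanishing trace on $\Gamma^+_0$ rather than $\Gamma^-_0$, but since both boundary pieces have positive measure, one can take $C_p$ to denote the maximum of the two corresponding constants without affecting the stated result. No additional use of the stabilisation, the mesh regularity, or the polynomial degree is required.
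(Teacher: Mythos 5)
Your proof is correct and is exactly the argument the paper intends: the paper's own proof simply states that the result ``proceeds similarly to Lemma~\ref{lem:equivalence}'', and your transcription (eigenvalue bound, Poincar\'e inequality using the vanishing trace on $\Gamma_0^+$, then dominating $\Norm{\nabla Z}_{\leb2(\Omega)}^2$ by the elementwise sum inside $\tnorm{\cdot}_*$) is precisely that adaptation. Your remark about taking $C_p$ to be the larger of the two Poincar\'e constants for the two boundary configurations is a sensible clarification the paper leaves implicit.
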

\begin{proof}
  The result proceeds similarly to Lemma \ref{lem:equivalence}.
\end{proof}
\begin{lemma}[The discrete dual problem is coercive]
  Let $Z\in\fes_0^+$, let the conditions of Lemma
  \ref{lem:discrete-primal-coerc} be satisfied, and let $C_c$ be
  defined as in Lemma \ref{lem:discrete-primal-coerc}. Then, for
  $C_{\sigma}$ chosen large enough, we have
  \begin{equation}
    a_h^*(Z, Z) + s_h^*(Z, Z)
    \geq
    C_c \tnorm{Z}_*^2.
  \end{equation}
\end{lemma}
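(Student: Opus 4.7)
The plan is to mirror the proof of Lemma \ref{lem:discrete-primal-coerc} exactly, with the roles of $\Gamma^-$ and $\Gamma^+$ interchanged to reflect that $Z \in \fes_0^+$ vanishes on $\Gamma_0^+$ rather than on $\Gamma_0^-$. First I would expand $a_h^*(Z,Z)$ using the second equality in \eqref{eq:discretebilineardual} with $\Psi = Z$, which collapses to
\begin{equation*}
    a_h^*(Z,Z) = \sum_{K \in \mathcal{T}} \ltwop{v\mathcal{H}Z_x}{\mathcal{H}Z}_K + \ltwop{\nabla Z}{\vec N \nabla Z}_K + \epsilon \ltwop{\nabla Z_v}{\vec M \nabla Z_v}_K,
\end{equation*}
after grouping the $M_{12}\|Z_x\|^2$, $M_{22}\ltwop{Z_x}{Z_v}$ and $\epsilon\|Z_v\|^2$ terms into the $\vec{N}$ quadratic form in the same way as in the primal case.

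Second, I would integrate the convection term element-wise by parts, writing $v\, \mathcal{H}Z_x \cdot \mathcal{H}Z = \tfrac{1}{2} v\, (|\mathcal{H}Z|^2)_x$ (since $v_x = 0$) and splitting the resulting surface integrals into a $\partial\Omega$ part and an $\mathcal{E}_{int}$ jump part. Because $Z$ vanishes on $\Gamma_0^+$ together with its tangential derivatives, and because $v n_x = 0$ on $\Gamma_0$, the $\partial\Omega$ contribution reduces to terms supported on $\Gamma^-$ and on $\Gamma^+$ (through the $M_{11} Z_x^2$ part of $|\mathcal{H}Z|^2$ only). The crucial step is then to show that adding $s_{h,1}^*(Z,Z)$, integrated over $\mathcal{E}_{int} \cup \Gamma^+$, cancels the interior jump contribution and combines with the $\Gamma^\pm$ surface contributions---via the componentwise identity $2\avg{a}\jump{a} = \jump{a^2}$---to leave exactly the desired $\tfrac{1}{2}\|\mathcal{H}Z\|_{\Gamma^-}^2$ term, directly analogous to the identity $\sum_{K} \ltwop{v\mathcal{H}W_x}{\mathcal{H}W}_K + s_{h,1}(W,W) = \tfrac{1}{2}\|\mathcal{H}W\|_{\Gamma^+}^2$ used in the primal proof.

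The remaining gradient terms are controlled from below using $\lambda_-(\vec N) > 0$ (guaranteed by $4\epsilon M_{12} > M_{22}^2$) and $\lambda_-(\vec M) \geq 0$, while $s_{h,2}^*(Z,Z)$ contributes the jump penalty $\sigma\|\sqrt{\vec M}\jump{\nabla Z}_v\|^2_{\leb{2}(\mathcal{E}_{int}\cup\Gamma_0)}$ together with a consistency cross-term that is absorbed by Young's inequality once $C_\sigma$ is chosen sufficiently large, yielding the coercivity constant $C_c$ matching that of Lemma \ref{lem:discrete-primal-coerc}. The main obstacle will be careful sign-tracking in the integration by parts: in the primal setting the surviving boundary is $\Gamma^+$ on which $v n_x > 0$, so the IBP directly produces a positive boundary term, while in the adjoint setting the surviving boundary is $\Gamma^-$ on which $v n_x < 0$. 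Verifying that the specific structure of $s_h^*$, with its integration regions $\mathcal{E}_{int} \cup \Gamma^+$ and $\mathcal{E}_{int} \cup \Gamma_0$, correctly realises the post-integration-by-parts form of the continuous adjoint from \eqref{eq:adjoint-def} (which carries a minus sign on the convection term) is the principal bookkeeping exercise.
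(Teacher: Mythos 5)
Your overall strategy is the intended one, and it is essentially all the paper itself says: its proof of this lemma is a single sentence asserting that the argument mimics Lemma \ref{lem:discrete-primal-coerc} with the boundary conditions and the boundary terms of the stabilisation swapped. You correctly identify the regrouping into the $\vec N$ quadratic form, the elementwise integration by parts of $\tfrac12 v\,\partial_x|\mathcal{H}Z|^2$, the cancellation of interior jumps by $s^*_{h,1}$, and the absorption of the $s^*_{h,2}$ cross-term for $C_\sigma$ large. The gap is that the one step which genuinely differs from the primal case --- the sign of the surviving physical-boundary term --- is exactly the step you defer as ``bookkeeping'', and with the signs in your own opening display it does not come out as you claim. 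Starting from $a_h^*(Z,Z)=\sum_K\ltwop{v\mathcal{H}Z_x}{\mathcal{H}Z}_K+\cdots$ (the diagonal of \eqref{eq:discretebilineardual}, which carries a \emph{plus} sign on the convection term), elementwise integration by parts gives
\begin{equation*}
  \sum_{K}\ltwop{v\mathcal{H}Z_x}{\mathcal{H}Z}_K
  =
  \tfrac12\Norm{\mathcal{H}Z}_{\Gamma^+}^2
  -
  \tfrac12\Norm{\mathcal{H}Z}_{\Gamma^-}^2
  +
  \tfrac12\int_{\mathcal{E}_{int}}(v,0)^T\cdot\jump{M_{11}Z_x^2+2M_{12}Z_xZ_v+M_{22}Z_v^2}\,\d s,
\end{equation*}
since $vn_x=|v|$ on $\Gamma^+$ and $vn_x=-|v|$ on $\Gamma^-$ (the $\jump{Z^2}$ contribution vanishes by continuity of $Z$). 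Adding $s^*_{h,1}(Z,Z)$, whose integration region is $\mathcal{E}_{int}\cup\Gamma^+$, cancels the interior jump term and the $\Gamma^+$ contribution (which for $Z\in\fes_0^+$ reduces to the $M_{11}Z_x^2$ part), but leaves $-\tfrac12\Norm{\mathcal{H}Z}_{\Gamma^-}^2$: the \emph{wrong} sign for coercivity in $\tnorm{\cdot}_*$.

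The repair is that the adjoint case is not a symmetric relabelling of $\Gamma^-$ and $\Gamma^+$; the convection term must also change sign, exactly as it does between the two lines of the continuous adjoint \eqref{eq:adjoint-def}, whose second form carries $-\ltwop{v\mathcal{H}z_x}{\mathcal{H}\psi}$. It is the elementwise diagonal of this minus-sign version that produces $+\tfrac12\Norm{\mathcal{H}Z}_{\Gamma^-}^2-\tfrac12\Norm{\mathcal{H}Z}_{\Gamma^+}^2$ plus interior jumps of the opposite sign, after which the $\Gamma^+$ remainder and the interior jumps are removed by the stabilisation and the desired $+\tfrac12\Norm{\mathcal{H}Z}_{\Gamma^-}^2$ survives. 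Your final paragraph correctly flags the sign of $vn_x$ on $\Gamma^-$ as the danger point, but the proof then asserts the favourable outcome without performing the check, and performed literally from your first display the check fails. The remainder of your argument (positivity via $\lambda_-(\vec N)>0$ and $\lambda_-(\vec M)>0$, and absorbing the $s^*_{h,2}$ cross-term with an inverse inequality once $C_\sigma$ is large enough) is sound and matches the primal proof.
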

\begin{proof}
  The proof mimics that of Lemma \ref{lem:discrete-primal-coerc}, with
  the only difference being the boundary conditions and corresponding
  boundary terms in the stabilisation.
\end{proof}

\begin{corollary}[Stability]
  Let the conditions of Lemma \ref{lem:discrete-primal-coerc} be
  satisfied, and let $\delta_h$ be defined as in Lemma
  \ref{lem:discrete-stationary-stability}. Suppose $\qp{U, Z,
    F}\in\fes^-_0\times\fes_0^+\times\fes$ satisfy the discrete
  optimal control problem \eqref{eq:discreteKKT}. Then
  \begin{equation}
    C_c\qp{\tnorm{U}^2 + \frac{\alpha}{2} \tnorm{F}_*^2}
    \leq
    \frac{1}{2\alpha \delta_h} \|\mathcal{D}\|_{\mathcal{H}}^2.
  \end{equation}
\end{corollary}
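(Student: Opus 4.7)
The plan is to replay the proof of Corollary \ref{cor:stability-alt} at the discrete level, using the lemmas just established as drop-in replacements for their continuous counterparts. First I would test the three KKT equations \eqref{eq:discreteKKT} with $\Phi = U$, $\Psi = Z$, and $\Xi = U$ respectively, scaling the primal equation by $\alpha$. The identity $\alpha\ltwop{F}{U}_{\mathcal{H}} = \ltwop{Z}{U}_{\mathcal{H}}$ produced by the third KKT equation causes the cross term $\ltwop{U}{Z}_{\mathcal{H}}$ to cancel when the scaled primal and dual are added, leaving
\[
\alpha\bigl(a_h(U,U) + s_h(U,U)\bigr) + a_h^*(Z,Z) + s_h^*(Z,Z) = \ltwop{\mathcal{D}}{Z}_{\mathcal{H}}.
\]

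Next I would lower-bound the left-hand side via the discrete primal and dual coercivity results, obtaining $\alpha C_c \tnorm{U}^2 + C_c \tnorm{Z}_*^2$. The right-hand side is estimated by Cauchy--Schwarz followed by a Young inequality, together with the norm equivalence $\Norm{Z}_{\mathcal{H}}^2 \leq C_{\textup{eq},h}^+ \tnorm{Z}_*^2$ from Lemma \ref{lem:dual-equivalence}. Because $\delta_h = C_c / C_{\textup{eq},h}^+$, tuning the Young parameter to $\delta_h$ is precisely what is needed to absorb half of the $C_c \tnorm{Z}_*^2$ term back into the left, giving
\[
\alpha C_c \tnorm{U}^2 + \tfrac{1}{2} C_c \tnorm{Z}_*^2 \leq \tfrac{1}{2\delta_h} \Norm{\mathcal{D}}_{\mathcal{H}}^2.
\]

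Finally, to translate this into a bound involving $F$, I would test the control equation with $\Xi = \alpha F - Z \in \fes$ (assuming the mild compatibility $\fes^+_0 \subseteq \fes$), which forces $\Norm{\alpha F - Z}_{\mathcal{H}} = 0$ and hence $\alpha F = Z$ as finite element functions; consequently $\tnorm{Z}_*^2 = \alpha^2 \tnorm{F}_*^2$. Substituting and dividing by $\alpha$ then yields the advertised bound. The only genuinely non-routine point, and the place the argument could fail, is the cancellation of the cross term in the first step: it requires $U$ to be admissible in the control equation, that is $\fes^-_0 \subseteq \fes$, together with the analogous inclusion used in the final identification $\alpha F = Z$. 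Both are natural compatibility assumptions on the discrete spaces and pose no real obstacle; beyond them, the proof is a faithful discrete mirror of Corollary \ref{cor:stability-alt}.
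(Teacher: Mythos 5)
Your proposal is correct and matches the paper's (very terse) proof, which simply says to mimic Corollary \ref{cor:stability-alt} without applying the norm equivalence at the outset --- exactly your step of keeping $C_c\tnorm{U}^2$ and $C_c\tnorm{Z}_*^2$ on the left from the discrete coercivity lemmas and only invoking Lemma \ref{lem:dual-equivalence} on the right-hand side before Young's inequality. Your closing identification $\alpha F = Z$ (under the natural inclusion of the discrete spaces) is the right way to convert $\tnorm{Z}_*$ into $\alpha\tnorm{F}_*$, which the continuous argument's Cauchy--Schwarz step ($\alpha\|f\|_{\mathcal{H}}\le\|z\|_{\mathcal{H}}$) would not deliver for the triple norm; the paper leaves this implicit.
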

\begin{proof}
  The proof mimics that of Corollary \ref{cor:stability-alt}. In this case, however, we do not apply the norm equivalence results at the beginning.
\end{proof}

\begin{lemma}[C\'ea's Lemma for steady-state Kolmogorov control]
  \label{lem:cea-optimal-control}
  Let $(u, z, f) \in \sobhsubsc{2}{0, \Gamma_0^-}(\Omega) \times
  \sobhsubsc{2}{0, \Gamma_0^+}(\Omega) \times \sobh{2}(\Omega)$ solve
  the stationary optimal control problem (\ref{eq: Stationary KKT})
  and let $(U, Z, F) \in \fes^-_0 \times \fes^+_0 \times \fes$ solve
  the finite element approximation (\ref{eq:discreteKKT}). Then,
  \begin{equation}
    \tnorm{U - u}^2 + \tnorm{Z - z}_*^2
    \leq
    C \inf_{(\Phi, \Psi) \in \fes^-_0\times \fes^+_0}
    \left( \tnorm{\Phi - u}^2 + \tnorm{\Psi - z}_*^2 \right),
\end{equation}
  where $C$ is a constant independent of $h$, $u$, $z$ and $f$.
\end{lemma}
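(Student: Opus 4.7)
The plan is to mimic the proof of Lemma \ref{lem:cea}, but for the coupled primal-dual system, splitting the error into a projection piece and a finite element remainder for both the state and the adjoint, and then combining the two error equations with a judicious weighting by $\alpha$ so that cross-terms cancel. Concretely, for arbitrary $\Phi_h \in \fes^-_0$ and $\Psi_h \in \fes^+_0$, I would decompose
\begin{equation}
  U - u = \theta_u + \rho_u, \qquad Z - z = \theta_z + \rho_z,
\end{equation}
with $\theta_u := U - \Phi_h \in \fes^-_0$, $\rho_u := \Phi_h - u$, and analogously for $\theta_z, \rho_z$. Using consistency of $(a_h, s_h)$ and $(a_h^*, s_h^*)$ and subtracting the continuous KKT system \eqref{eq: Stationary KKT} from the discrete one \eqref{eq:discreteKKT} gives, for all $\Phi \in \fes^-_0$ and $\Psi \in \fes^+_0$,
\begin{equation}
  a_h(\theta_u, \Phi) + s_h(\theta_u, \Phi) = \ltwop{F - f}{\Phi}_{\mathcal H} - a_h(\rho_u, \Phi) - s_h(\rho_u, \Phi),
\end{equation}
\begin{equation}
  a_h^*(\theta_z, \Psi) + s_h^*(\theta_z, \Psi) = -\ltwop{U - u}{\Psi}_{\mathcal H} - a_h^*(\rho_z, \Psi) - s_h^*(\rho_z, \Psi).
\end{equation}

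Next, I would test with $\Phi = \theta_u$, $\Psi = \theta_z$, multiply the primal identity by $\alpha$, and add. The key trick — and what I expect to be the main obstacle to verify carefully — is handling the coupling term. Using the control equations $\alpha F = Z$ and $\alpha f = z$ (in the $\mathcal H$-inner-product sense, which follows from the third lines of \eqref{eq: Stationary KKT} and \eqref{eq:discreteKKT}), one gets $\alpha(F - f) = Z - z$, so that
\begin{equation}
  \alpha \ltwop{F - f}{\theta_u}_{\mathcal H} - \ltwop{U - u}{\theta_z}_{\mathcal H}
  =
  \ltwop{\rho_z}{\theta_u}_{\mathcal H} - \ltwop{\rho_u}{\theta_z}_{\mathcal H},
\end{equation}
the symmetric $\ltwop{\theta_z}{\theta_u}_{\mathcal H}$ contributions cancelling. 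Applying the discrete coercivity of $a_h + s_h$ on $\fes^-_0$ and of $a_h^* + s_h^*$ on $\fes^+_0$ from Lemmas \ref{lem:discrete-primal-coerc} and its dual counterpart yields a lower bound $C_c(\alpha \tnorm{\theta_u}^2 + \tnorm{\theta_z}_*^2)$ on the left-hand side.

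It remains to bound the right-hand side in terms of the $\tnorm{\cdot}$- and $\tnorm{\cdot}_*$-norms of $\rho_u, \rho_z$. The bilinear-form terms are handled by the standard boundedness assumed on $a_h, s_h, a_h^*, s_h^*$, while the residual $\mathcal H$-inner products $\ltwop{\rho_z}{\theta_u}_{\mathcal H}$ and $\ltwop{\rho_u}{\theta_z}_{\mathcal H}$ are controlled by Cauchy--Schwarz together with Lemma \ref{lem:equivalence} and its dual version Lemma \ref{lem:dual-equivalence}, which give $\Norm{\rho_u}_{\mathcal H} \leq \sqrt{C_{\textup{eq},h}^+}\,\tnorm{\rho_u}$ and $\Norm{\rho_z}_{\mathcal H} \leq \sqrt{C_{\textup{eq},h}^+}\,\tnorm{\rho_z}_*$ once $\Phi_h, \Psi_h$ are chosen so that $\rho_u, \rho_z$ lie in the requisite spaces (e.g.\ via nodal or Ritz interpolants). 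Applying Young's inequality to absorb $\tnorm{\theta_u}$ and $\tnorm{\theta_z}_*$ into the left-hand side, then invoking the triangle inequality $\tnorm{U - u} \leq \tnorm{\theta_u} + \tnorm{\rho_u}$ (and the analogue for $Z - z$) and minimising over the choice of $(\Phi_h, \Psi_h)$, delivers the stated best-approximation bound. The principal delicate point is the cancellation argument in the cross-term: it is what makes the Lagrangian-style coupling tractable and what supplies the correct $\alpha$-weighting needed for a clean C\'ea estimate.
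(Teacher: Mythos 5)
Your proposal follows essentially the same route as the paper's proof: discrete coercivity applied to the finite-element part of the error, Galerkin orthogonality via consistency to convert the primal and adjoint residuals into the $\mathcal H$-inner-product coupling terms, cancellation of the symmetric $\ltwop{\theta_z}{\theta_u}_{\mathcal H}$ contribution, and then Cauchy--Schwarz, the norm-equivalence lemmata, Young's inequality and the triangle inequality. Your explicit $\alpha$-weighting of the primal equation is a slightly cleaner bookkeeping of the control relation $\alpha(F-f)=Z-z$ (the paper silently absorbs the $1/\alpha$ into the constant), but it does not change the argument.
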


\begin{proof}
Using the coercivity of the primal and adjoint bilinear forms, along
with the consistency of the discrete stabilisation terms, we have
for any $\Phi \in \fes^-_0$ and $\Psi \in \fes^+_0$
\begin{multline}
    \begin{split}
      C_c \qp{\tnorm{U - \Phi}^2
        + \tnorm{Z - \Psi}_*^2 }    
      &\leq
      a_h(U-\Phi, U-\Phi) + s_h(U-\Phi, U-\Phi)
      +
      a_h^*(Z-\Psi, Z-\Psi) + s_h^*(Z-\Psi, Z-\Psi)    
      \\
      &\leq
      a_h(U-u, U-\Phi) + s_h(U-u, U-\Phi)
      +
      a_h(u - \Phi, U-\Phi) + s_h(u-\Phi, U-\Phi)
      \\
      &\qquad +
      a_h^*(Z-z, Z-\Psi) + s_h^*(Z-z, Z-\Psi)
      +
      a_h^*(z-\Psi, Z-\Psi) + s_h^*(z-\Psi, Z-\Psi).
    \end{split}
\end{multline}
Using the consistency of the primal and adjoint discrete bilinear forms
and Galerkin orthogonality
\begin{multline}
    \begin{split}
      C_c \qp{\tnorm{U - \Phi}^2
        + \tnorm{Z - \Psi}_*^2 }    
      &\leq
      \ltwop{Z-z}{U-\Phi}_{\mathcal H}
      -
      \ltwop{U-u}{Z-\Psi}_{\mathcal H}
      \\
      &\qquad +
      a_h(u - \Phi, U-\Phi) + s_h(u-\Phi, U-\Phi)
      \\
      &\qquad +
      a_h^*(z-\Psi, Z-\Psi) + s_h^*(z-\Psi, Z-\Psi).
    \end{split}
\end{multline}
Notice that
\begin{multline}
    \begin{split}
      \ltwop{Z-z}{U-\Phi}_{\mathcal H}
      -
      \ltwop{U-u}{Z-\Psi}_{\mathcal H}
      &=
      \ltwop{Z-\Psi}{U-\Phi}_{\mathcal H}
      +
      \ltwop{\Psi-z}{U-\Phi}_{\mathcal H}
      \\
      &\qquad -
      \ltwop{U-\Phi}{Z-\Psi}_{\mathcal H}
      -
      \ltwop{\Phi - u}{Z-\Psi}_{\mathcal H}    
      \\
      &=    
      \ltwop{\Psi-z}{U-\Phi}_{\mathcal H}
      -
      \ltwop{\Phi - u}{Z-\Psi}_{\mathcal H}
      \\
      &\leq
      \Norm{\Psi-z}_{\mathcal H}
      \Norm{U-\Phi}_{\mathcal H}
      +
      \Norm{\Phi - u}_{\mathcal H}
      \Norm{Z-\Psi}_{\mathcal H}.
    \end{split}
\end{multline}
Applying Lemmata \ref{lem:equivalence} and \ref{lem:dual-equivalence}
and the Cauchy-Schwarz and Young inequalities
\begin{equation}
    \begin{split}
      \ltwop{Z-z}{U-\Phi}_{\mathcal H}
      -
      \ltwop{U-u}{Z-\Psi}_{\mathcal H}
      &\leq
      \delta_h^2
      \tnorm{\Psi-z}_*^2
      +
      \frac{C_c}{2}\tnorm{U-\Phi}^2
      +
      \delta_h^2
      \tnorm{\Phi - u}^2
      +
      \frac{C_c}{2} \tnorm{Z-\Psi}_*^2.
    \end{split}
\end{equation}
Substituting this back into the inequality
\begin{multline}
    \begin{split}
      \frac{C_c}{2} \qp{\tnorm{U - \Phi}^2 + \tnorm{Z - \Psi}_*^2 }    
      &\leq
      \frac{2 C^+_{eq,h}}{\delta_h} \qp{\tnorm{\Psi-z}^2 + \tnorm{\Phi - u}^2} \\
      &\quad +
      a_h(u - \Phi, U-\Phi) + s_h(u-\Phi, U-\Phi)
      \\
      &\quad +
      a_h^*(z-\Psi, Z-\Psi) + s_h^*(z-\Psi, Z-\Psi).
    \end{split}
\end{multline}
Using the boundedness of the bilinear forms and Young's inequality
\begin{equation}
  \begin{split}
    a_h^*(Z-z, Z-\Psi) + s_h^*(Z-z, Z-\Psi)
    &\leq
    \frac{2 C^+_{eq,h}}{\delta_h} \tnorm{Z-z}_*^2 + \frac{C_c}{4} \tnorm{Z-\Psi}_*^2, \\
    a_h^*(z-\Psi, Z-\Psi) + s_h^*(z-\Psi, Z-\Psi)
    &\leq
        \frac{2 C^+_{eq,h}}{\delta_h} \tnorm{z-\Psi}_*^2 + \frac{C_c}{4} \tnorm{Z-\Psi}_*^2.
  \end{split}
\end{equation}
Combining terms concludes the proof.
\end{proof}

\begin{corollary}[Best approximation for steady state control]
  \label{cor:best-approx-optimal-control}
  Under the conditions of Lemma \ref{lem:cea-optimal-control}, suppose
  $u \in \sobhsubsc{2}{0, \Gamma_0^-}(\Omega) \cap \sobh{s}(\Omega)$,
  $z \in \sobhsubsc{2}{0, \Gamma_0^+}(\Omega) \cap \sobh{s}(\Omega)$
  with $s\geq 3$. Then, the error satisfies
  \begin{equation}
    \qp{\tnorm{U - u}^2 + \tnorm{Z - z}_*^2}^{1/2}
    \leq
    C h^{\min(r-1, s-2)}
    \left( \Norm{u}_{\sobh{s}(\Omega)} + \Norm{z}_{\sobh{s}(\Omega)} \right),
  \end{equation}
  where $r$ is the degree of the polynomial basis and $C$ is a
  constant depending on the mesh regularity but independent of $h, u$
  and $z$.
\end{corollary}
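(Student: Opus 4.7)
The plan is to invoke the Céa-type estimate from Lemma \ref{lem:cea-optimal-control} and then bound the infimum by evaluating at suitable quasi-interpolants. Since the finite element spaces $\fes^-_0$ and $\fes^+_0$ only impose $\sobh1$-conformity with homogeneous Dirichlet traces on $\Gamma_0^-$ and $\Gamma_0^+$ respectively (the remaining trace conditions appearing in $\sobhsubsc{2}{0,\Gamma_0^\pm}$ are enforced weakly via the stabilisation), I would pick Scott--Zhang-type quasi-interpolants $\Pi_h^- u \in \fes_0^-$ and $\Pi_h^+ z \in \fes_0^+$ that preserve these traces. Lemma \ref{lem:cea-optimal-control} then yields
\begin{equation*}
\tnorm{U-u}^2 + \tnorm{Z-z}_*^2
\leq C \qp{\tnorm{\Pi_h^- u - u}^2 + \tnorm{\Pi_h^+ z - z}_*^2},
\end{equation*}
and the task reduces to bounding each contribution of $\tnorm{\cdot}$ and $\tnorm{\cdot}_*$ by $h^{2\min(r-1,s-2)}$ times the appropriate Sobolev norm.

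For the broken interior terms I would use element-wise Bramble--Hilbert estimates. The gradient contribution satisfies $\|\nabla(u - \Pi_h^- u)\|_{\leb2(K)} \lesssim h_K^{\min(r, s-1)}\|u\|_{\sobh s(K)}$, which is of higher order than the target. The dominant contribution is the mixed second derivative $\|\nabla(u-\Pi_h^- u)_v\|_{\leb2(K)} \lesssim h_K^{\min(r-1,s-2)}\|u\|_{\sobh s(K)}$, which already matches the claimed rate. For the boundary trace $\|\mathcal{H}(u - \Pi_h^- u)\|_{\Gamma^+}^2$ I would bound $|v|\leq 1$ and apply the standard trace inequality $\|\psi\|_{\leb2(\partial K)}^2 \lesssim h_K^{-1}\|\psi\|_{\leb2(K)}^2 + h_K|\psi|_{\sobh1(K)}^2$ to $\psi = u - \Pi_h^- u$ and to $\psi = \sqrt{\vec M}\nabla(u - \Pi_h^- u)$, producing a contribution of order $h^{2\min(r-1,s-2)+1}$, which is subleading.

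The main obstacle is controlling the penalty term $\sigma \|\sqrt{\vec M}\jump{\nabla(u-\Pi_h^- u)}_v\|^2_{\leb2(\mathcal{E}_{int})}$ with $\sigma \sim h^{-1}$, since the inverse scaling could in principle degrade the order by one power of $h$. Here I would exploit the assumption $s \geq 3$: since $u \in \sobh s(\Omega)$ has continuous gradient traces, $\jump{\nabla u}_v = 0$ on every internal facet, so the jump is entirely due to the interpolation error. Applying the element trace inequality
\begin{equation*}
\|\nabla(u-\Pi_h^- u)\|_{\leb2(\partial K)}^2 \lesssim h_K^{-1}\|\nabla(u-\Pi_h^- u)\|_{\leb2(K)}^2 + h_K |u-\Pi_h^- u|_{\sobh2(K)}^2
\end{equation*}
and multiplying by $\sigma \sim h^{-1}$ gives precisely $h^{2\min(r-1,s-2)}\|u\|_{\sobh s(K)}^2$, exactly matching the target rate.

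The analogous chain of estimates applies to $z$ in the adjoint norm $\tnorm{\cdot}_*$ after swapping inflow/outflow roles in the trace contribution $\|\mathcal{H}(z - \Pi_h^+ z)\|_{\Gamma^-}^2$. Summing over all elements and facets, and combining both primal and dual bounds, yields
\begin{equation*}
\tnorm{U-u}^2 + \tnorm{Z-z}_*^2 \lesssim h^{2\min(r-1,s-2)}\qp{\Norm{u}^2_{\sobh s(\Omega)} + \Norm{z}^2_{\sobh s(\Omega)}},
\end{equation*}
and taking square roots concludes the proof. The proposal is essentially a clean adaptation of Corollary \ref{cor:best-approx}, with the extra book-keeping of the dual component already handled by the Céa lemma's symmetric treatment.
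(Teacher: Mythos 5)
Your proposal is correct and follows the route the paper intends: the corollary is stated without an explicit proof as an immediate consequence of Lemma \ref{lem:cea-optimal-control} combined with standard quasi-interpolation and trace estimates, which is exactly what you carry out (including the key observation that the $\sigma\sim h^{-1}$ penalty term does not degrade the rate because $\jump{\nabla u}_v$ vanishes for $u\in\sobh{s}(\Omega)$, $s\geq 3$, and $2\min(r,s-1)-2=2\min(r-1,s-2)$). Nothing to add.
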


\subsection{Spatial semi-discretisation of time-dependent Kolmogorov constrained optimal control}

We consider the following spatially discrete finite element
method. For a.e. $t\in(0, T]$, seek $\qp{U(t), Z(t), F(t)} \in
  \fes^-_0 \times \fes^+_0\times\fes$, such that
\begin{equation}
  \label{eq:control-spatial-disc}
  \begin{split}      
    \int_0^T
    \ltwop{\partial_t U}{\Phi}_{\mathcal{H}}
    +
    a_h(U, \Phi)
    +
    s_h(U, \Phi)
    -
    \ltwop{F}{\Phi}_{\mathcal{H}} \d t
    &= 0 \Foreach \Phi \in \fes^-_0
    \\
    \int_0^T
    - \ltwop{\partial_t Z}{\Psi}_{\mathcal{H}}
    +
    a_h^*(Z, \Psi)
    +
    s_h^*(Z, \Psi)
    + \ltwop{U - \mathcal{D}}{\Psi}_{\mathcal{H}} \d t
    &= 0 \Foreach \Psi \in \fes^+_0
    \\
    \int_0^T
    \ltwop{\alpha F - Z}{\Xi} \d t
    &= 0, \Foreach \Xi\in\fes \\
    U(0) &= 0 \\
    Z(T) &= 0.
  \end{split}
\end{equation}

The solutions to \eqref{eq:control-spatial-disc} satisfy the following
stability result, the proof of which follows the same methodology as
that of Theorem \ref{thm:oc-temporal-stability}.

\begin{theorem}[Stability of the time-dependant optimal control problem]
\label{thm:control-spatial-stability}
For a.e. $t \in (0, T]$, let $\qp{U(t), Z(t), F(t)} \in \fes^-_0 \times \fes^+_0 \times \fes$ solve \eqref{eq:control-spatial-disc}, and let $\delta_h$ be defined as in Lemma \ref{lem:discrete-stationary-stability}. Then,
\begin{equation}
  \Norm{U(T)}_{\mathcal{H}}^2
  +
  \frac{1}{\alpha} \Norm{Z(0)}_{\mathcal{H}}^2
  \leq
  \dfrac{1}{\delta_h} \int_0^T \exp\qp{-2\delta_h (T - t)} \Norm{\mathcal{D}(t)}^2_{\mathcal{H}} \d t.
\end{equation}
\end{theorem}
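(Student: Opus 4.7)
The plan is to adapt the argument of Theorem \ref{thm:oc-temporal-stability} to the semi-discrete setting, substituting the discrete coercivity and norm-equivalence estimates (Lemma \ref{lem:discrete-primal-coerc}, its dual counterpart, and Lemmas \ref{lem:equivalence}, \ref{lem:dual-equivalence}) for their continuous analogues. I would begin by testing the primal equation of \eqref{eq:control-spatial-disc} with $\Phi = U$ and the dual with $\Psi = Z$. The control relation, applied a.e.\ in $t$ with test function $\Xi = F$, gives $\alpha \Norm{F}_{\mathcal H}^2 = \ltwop{Z}{F}_{\mathcal H}$, so that multiplying the primal by $\alpha$ and summing with the dual results in cancellation of the cross terms $\pm \ltwop{Z}{U}_{\mathcal H}$.

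The time-derivative terms combine as $\alpha\ltwop{\partial_t U}{U}_{\mathcal H} - \ltwop{\partial_t Z}{Z}_{\mathcal H} = \tfrac12 \tfrac{\d}{\d t}\qp{\alpha\Norm{U}_{\mathcal H}^2 - \Norm{Z}_{\mathcal H}^2}$, and integration in time combined with the boundary conditions $U(0) = 0$ and $Z(T) = 0$ yields
\begin{equation*}
    \tfrac{\alpha}{2}\Norm{U(T)}_{\mathcal H}^2 + \tfrac{1}{2}\Norm{Z(0)}_{\mathcal H}^2 + \int_0^T \qp{\alpha \qp{a_h(U,U) + s_h(U,U)} + a_h^*(Z,Z) + s_h^*(Z,Z)} \d t = \int_0^T \ltwop{\mathcal D}{Z}_{\mathcal H} \d t.
\end{equation*}
Next, I would invoke the discrete coercivity bounds $a_h(U,U) + s_h(U,U) \geq C_c \tnorm{U}^2$ and its dual version, combine them with the norm equivalences to produce lower bounds $\alpha\delta_h \Norm{U}_{\mathcal H}^2$ and $\delta_h \Norm{Z}_{\mathcal H}^2$, then apply Cauchy--Schwarz and Young's inequality with parameter $\delta_h$ to $\ltwop{\mathcal D}{Z}_{\mathcal H}$ to absorb half of the $\Norm{Z}_{\mathcal H}^2$ contribution onto the left-hand side.

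The hard part will be recovering the temporal decay kernel $\exp(-2\delta_h(T-t))$ via Gr\"onwall. The mixed forward/backward structure of the KKT system makes this delicate, since the natural composite energy $Y(t) := \tfrac{\alpha}{2}\Norm{U(t)}_{\mathcal H}^2 - \tfrac{1}{2}\Norm{Z(t)}_{\mathcal H}^2$ is signed and its boundary data comes from $U(0) = 0$ and $Z(T) = 0$ rather than a single endpoint. My approach would be to derive the pointwise differential inequality $\tfrac{\d}{\d t}Y + 2\delta_h Y \leq \tfrac{1}{2\delta_h}\Norm{\mathcal D}_{\mathcal H}^2$, apply the standard integrating-factor form of Gr\"onwall over $[0,T]$, and identify $Y(0) = -\tfrac12\Norm{Z(0)}_{\mathcal H}^2$ and $Y(T) = \tfrac{\alpha}{2}\Norm{U(T)}_{\mathcal H}^2$ using the boundary conditions. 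Rearranging and rescaling by $1/\alpha$ should then produce the claimed bound.
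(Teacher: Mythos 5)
Your overall route is exactly the paper's: the paper proves this theorem only by asserting that the proof ``follows the same methodology as that of Theorem \ref{thm:oc-temporal-stability}'', i.e.\ form the energy identity from the KKT system by testing with $(U,Z,F)$, eliminate the cross term via the control equation, invoke the discrete coercivity and norm-equivalence lemmas, apply Cauchy--Schwarz and Young, and finish with Gr\"onwall. Your signed quantity $Y(t) = \tfrac{\alpha}{2}\Norm{U}_{\mathcal H}^2 - \tfrac12\Norm{Z}_{\mathcal H}^2$ and the differential inequality $Y' + 2\delta_h Y \le \tfrac{1}{2\delta_h}\Norm{\mathcal D}_{\mathcal H}^2$ are a legitimate and more explicit rendering of the step the paper leaves implicit, and the inequality itself is correct (the $\tfrac{\delta_h}{2}\Norm{Z}^2$ surviving Young's inequality dominates the $-\delta_h\Norm{Z}^2$ needed to form $2\delta_h Y$).

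The one place where your plan, executed as written, does not deliver the stated bound is the endpoint bookkeeping after the integrating factor. Since $Y(0) = -\tfrac12\Norm{Z(0)}_{\mathcal H}^2 \le 0$, Gr\"onwall yields
\begin{equation*}
  \frac{\alpha}{2}\Norm{U(T)}_{\mathcal H}^2 + \frac{e^{-2\delta_h T}}{2}\Norm{Z(0)}_{\mathcal H}^2
  \le
  \frac{1}{2\delta_h}\int_0^T e^{-2\delta_h(T-t)}\Norm{\mathcal D(t)}_{\mathcal H}^2\, \d t,
\end{equation*}
so after rescaling by $2/\alpha$ the $\Norm{Z(0)}_{\mathcal H}^2$ term carries the damping factor $e^{-2\delta_h T}$ and the right-hand side carries $1/(\alpha\delta_h)$, neither of which matches the theorem as printed (undamped $\Norm{Z(0)}^2/\alpha$ on the left, $1/\delta_h$ on the right). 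The undamped coefficient on $\Norm{Z(0)}^2$ comes instead from the integrated identity without Gr\"onwall, by simply discarding the nonnegative time integral, but that route loses the exponential kernel on $\mathcal D$; this argument cannot produce both features simultaneously without additional work. This defect is inherited from the paper itself --- the continuous Theorem \ref{thm:oc-temporal-stability} ends with the same unexplained ``apply Gr\"onwall'' and exhibits the same constant mismatch --- so your proposal is faithful to the paper's methodology, but be aware that the inequality you would actually establish differs from the one stated in the theorem.
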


\begin{lemma}[A priori bound for time-dependent optimal control]
\label{lem:cea-fully-discrete-control}
Let $(u, z, f) \in H^1(0, T; \sobh{s}(\Omega)\cap\sobhsubsc{2}{0,
  \Gamma_0^-}(\Omega)) \times H^1(0, T;
\sobh{s}(\Omega)\cap\sobhsubsc{2}{0, \Gamma_0^+}(\Omega)) \times
L^2(0, T; \sobh{s}(\Omega))$ solve (\ref{eq: Dynamic KKT}) and let
$(U, Z, F) \in \fes^-_0 \times \fes^+_0 \times \fes$ solve
\eqref{eq:control-spatial-disc}. Then,
\begin{multline}
  \alpha\Norm{U(T) - u(T)}_{\mathcal{H}}^2
  +
  \Norm{Z(0) - z(0)}_{\mathcal{H}}^2
  +
  \delta_h \qp{
    \int_0^T \alpha \tnorm{U(t) - Ru(t)}^2 
    +
    \tnorm{Z(t) - Rz(t)}^2 \, \mathrm{d}t
  }
  \\
  \leq C h^{2\min(r-1, s-2)}
  \qp{
    \Norm{u}_{H^1(0, T; \sobhsubsc{s}{0, \Gamma_0^-}(\Omega))}^2
    +
    \Norm{z}_{H^1(0, T; \sobhsubsc{s}{0, \Gamma_0^+}(\Omega))}^2
  }.
\end{multline}
\end{lemma}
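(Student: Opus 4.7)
The plan is to adapt the Ritz-projection splitting used in Theorem~\ref{theorem:georgoulis-56} to the coupled primal--adjoint system and, in the same stroke, exploit the cross-term cancellation that underpinned Theorem~\ref{thm:oc-temporal-stability}. For each $t \in [0,T]$ I introduce the primal and adjoint Ritz projections $R^-u(t) \in \fes^-_0$ and $R^+z(t) \in \fes^+_0$ defined by
\[
a_h(R^-u, \Phi) + s_h(R^-u, \Phi) = a(u, \Phi), \qquad a_h^*(R^+z, \Psi) + s_h^*(R^+z, \Psi) = a^*(z, \Psi),
\]
for all $\Phi \in \fes^-_0$ and $\Psi \in \fes^+_0$. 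Corollary~\ref{cor:best-approx} (together with Lemma~\ref{lem:equivalence} to pass between $\tnorm{\cdot}$ and $\|\cdot\|_{\mathcal{H}}$) and its adjoint analogue give the pointwise-in-$t$ bounds
\[
\|R^-u - u\|_{\mathcal{H}} + \|R^+z - z\|_{\mathcal{H}} \leq C h^{\min(r-1,s-2)}\bigl(\|u\|_{\sobh{s}(\Omega)} + \|z\|_{\sobh{s}(\Omega)}\bigr),
\]
with analogous estimates for $\partial_t(R^-u - u)$ and $\partial_t(R^+z - z)$ since the Ritz map is linear and commutes with $\partial_t$. I decompose $U - u = \theta_u + \rho_u$ and $Z - z = \theta_z + \rho_z$, where $\theta_u := U - R^-u \in \fes^-_0$, $\theta_z := Z - R^+z \in \fes^+_0$, $\rho_u := R^-u - u$, and $\rho_z := R^+z - z$.

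Subtracting the continuous KKT system (\ref{eq: Dynamic KKT}) from the discrete system (\ref{eq:control-spatial-disc}), invoking consistency of $a_h, s_h, a_h^*, s_h^*$ to annihilate the Ritz residuals, and substituting the control identity $F - f = \alpha^{-1}(Z - z) = \alpha^{-1}(\theta_z + \rho_z)$ obtained from the control equation in each system, the error equations reduce to
\[
\ltwop{\partial_t \theta_u}{\Phi}_{\mathcal{H}} + a_h(\theta_u, \Phi) + s_h(\theta_u, \Phi) = \alpha^{-1}\ltwop{\theta_z + \rho_z}{\Phi}_{\mathcal{H}} - \ltwop{\partial_t \rho_u}{\Phi}_{\mathcal{H}},
\]
\[
-\ltwop{\partial_t \theta_z}{\Psi}_{\mathcal{H}} + a_h^*(\theta_z, \Psi) + s_h^*(\theta_z, \Psi) = -\ltwop{\theta_u + \rho_u}{\Psi}_{\mathcal{H}} + \ltwop{\partial_t \rho_z}{\Psi}_{\mathcal{H}},
\]
valid for all $\Phi \in \fes^-_0$ and $\Psi \in \fes^+_0$.

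Testing the primal equation with $\Phi = \alpha \theta_u$ and the adjoint equation with $\Psi = \theta_z$ and summing, the off-diagonal pairings $\ltwop{\theta_z}{\theta_u}_{\mathcal{H}}$ cancel by symmetry of the $\mathcal{H}$-inner product, exactly mirroring the cancellation in Lemma~\ref{lemma: relation on the dynamic system}. Applying the discrete coercivity results (Lemma~\ref{lem:discrete-primal-coerc} and its adjoint counterpart), together with Cauchy--Schwarz and Young's inequality tuned via Lemma~\ref{lem:equivalence} so that $\mathcal{H}$-norms on the right are absorbed using the factor $\delta_h = C_c (C_{\textup{eq},h}^+)^{-1}$, yields a differential inequality
\[
\tfrac{d}{dt}\bigl(\alpha \|\theta_u\|_{\mathcal{H}}^2 - \|\theta_z\|_{\mathcal{H}}^2\bigr) + \delta_h\bigl(\alpha \tnorm{\theta_u}^2 + \tnorm{\theta_z}_*^2\bigr) \leq C\bigl(\|\rho_z\|_{\mathcal{H}}^2 + \|\rho_u\|_{\mathcal{H}}^2 + \|\partial_t \rho_u\|_{\mathcal{H}}^2 + \|\partial_t \rho_z\|_{\mathcal{H}}^2\bigr).
\]
Integrating from $0$ to $T$, using $U(0) = u(0) = 0$ and $Z(T) = z(T) = 0$ to identify $\theta_u(0) = -\rho_u(0)$ and $\theta_z(T) = -\rho_z(T)$, and invoking the pointwise-in-$t$ Ritz bounds on $\rho_u, \rho_z, \partial_t \rho_u, \partial_t \rho_z$ to control the right-hand side by $C h^{2\min(r-1,s-2)}\bigl(\|u\|_{H^1(0,T;\sobh{s}(\Omega))}^2 + \|z\|_{H^1(0,T;\sobh{s}(\Omega))}^2\bigr)$, the claimed bound follows after a single application of the triangle inequality $U - u = \theta_u + \rho_u$ (and similarly for the adjoint) to convert the $\theta$-norms at $t = T$ and $t = 0$ into the stated full-error norms.

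The principal obstacle is calibrating the test-function weights in the energy step so that the cross terms cancel and no Gronwall factor exponential in $T$ is generated; without this cancellation the $\delta_h$-decay on the left of the final estimate collapses and the bound degrades unboundedly as $T \to \infty$, defeating the purpose of the hypocoercive discretisation. A secondary subtlety is the justification that $\partial_t$ commutes with the Ritz projection, which relies on the time-independence and linearity of the defining steady problems together with the assumed $H^1(0,T; \sobh{s})$ regularity of $u$ and $z$.
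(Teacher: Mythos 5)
Your proof is correct and follows essentially the same route as the paper: the same Ritz-projection splitting $U - u = \theta + \rho$ (with $Ru$, $Rz$ the elliptic projections defined by the stationary primal and adjoint discrete problems), the same weighted test functions $\alpha\theta_u$ and $\theta_z$ producing the cross-term cancellation, direct time integration using $\theta_u(0)=0$ and $\theta_z(T)=0$, and the stationary best-approximation bounds for the $\rho$ terms. The only (harmless) difference is that you track the residual pairings $\ltwop{\rho_z}{\theta_u}_{\mathcal{H}}$ and $\ltwop{\rho_u}{\theta_z}_{\mathcal{H}}$ explicitly and absorb them via Young's inequality, whereas the paper's displayed error equations write the coupling terms directly as $\ltwop{Z-Rz}{\Phi}_{\mathcal{H}}$ and $\ltwop{Ru-U}{\Psi}_{\mathcal{H}}$ so that only the $\theta$-pairings appear.
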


\begin{proof}
  As in the primal temporal problem, we let $\qp{U, Z} = \qp{U(t),
    Z(t)}$ denote the solution of (\ref{eq:control-spatial-disc}) and
  $\qp{Ru, Rz}$ the solution of the time independent case
  (\ref{eq:discreteKKT}). Let us decompose the errors
  \begin{equation}
    e_U(t) = (U - u)(t) = \theta_U(t) + \rho_U(t) = (U - Ru)(t) + (Ru - u)(t) 
  \end{equation}
  and
  \begin{equation}
    e_Z(t) := (Z - z)(t) = \theta_Z(t) + \rho_Z(t) = (Z - Rz)(t) + (Rz - z)(t) 
  \end{equation}
  By definition of the discrete and continuous problems
  \begin{equation}    
    \begin{split}
      \alpha  \ltwop{\partial_t \theta_U}{\Phi}_{\mathcal{H}}
      +
      \alpha a_h(\theta_U, \Phi)
      +
      \alpha s_h(\theta_U, \Phi)
      &=
      \ltwop{Z}{\Phi}_{\mathcal{H}}
      - \ltwop{\partial_t (Ru)}{\Phi}_{\mathcal{H}}
      - \alpha a_h(R u, \Phi) - \alpha s_h(R u, \Phi)
      \\
      &=
      \ltwop{Z- Rz}{\Phi}_{\mathcal{H}}
      - \alpha \ltwop{\partial_t (Ru-u)}{\Phi}_{\mathcal{H}}       
    \end{split}
  \end{equation}
  and 
  \begin{equation}    
    \begin{split}
      -\ltwop{\partial_t \theta_Z}{\Psi}_{\mathcal{H}}
      +
      a_h^*(\theta_Z, \Psi)
      +
      s_h^*(\theta_Z, \Psi)
      &=
      \ltwop{\mathcal D - U}{\Psi}_{\mathcal{H}}
      + \ltwop{\partial_t (Rz)}{\Psi}_{\mathcal{H}}
      - a_h^*(R z, \Psi) - s_h^*(R z, \Psi)
      \\
      &=
      \ltwop{Ru- U}{\Psi}_{\mathcal{H}}
      +
      \ltwop{\partial_t (Rz - z)}{\Psi}_{\mathcal{H}}       
    \end{split}
  \end{equation}
  Using the decompositions provided, we take $\Phi = \theta_U$
  \begin{equation}
    \begin{split}
      \alpha \ltwop{\partial_t \theta_U}{\theta_U}_{\mathcal{H}}
    + \alpha a_h(\theta_U, \theta_U)
    + \alpha s_h(\theta_U, \theta_U)
    &= \ltwop{\theta_Z}{\theta_U}_{\mathcal{H}}
    - \alpha \ltwop{\partial_t \rho_U}{\theta_U}_{\mathcal{H}}.
\end{split}
\end{equation}
Now, with $\Psi = \theta_Z$
\begin{equation}
\begin{split}
  -\ltwop{\partial_t \theta_Z}{\theta_Z}_{\mathcal{H}}
  + a_h^*(\theta_Z, \theta_Z)
  + s_h^*(\theta_Z, \theta_Z)
  &=
  -\ltwop{\theta_U}{\theta_Z}_{\mathcal{H}}
  + \ltwop{\partial_t \rho_Z}{\theta_Z}_{\mathcal{H}}.
\end{split}
\end{equation}
Adding the two equations, we obtain
\begin{multline}
  \alpha \ltwop{\partial_t \theta_U}{\theta_U}_{\mathcal{H}}
  -
  \ltwop{\partial_t \theta_Z}{\theta_Z}_{\mathcal{H}}
  +
  \alpha a_h(\theta_U, \theta_U)
  +
  \alpha s_h(\theta_U, \theta_U)
  +
  a_h^*(\theta_Z, \theta_Z)
  +
  s_h^*(\theta_Z, \theta_Z)
  =
  -
  \alpha \ltwop{\partial_t \rho_U}{\theta_U}_{\mathcal{H}}
  +
  \ltwop{\partial_t \rho_Z}{\theta_Z}_{\mathcal{H}}.
\end{multline}
Using the Cauchy-Schwarz and Young inequalities gives
  \begin{equation}
    \frac{1}{2} \ddt 
    \qp{\alpha
      \Norm{\theta_U}_{\mathcal{H}}^2
      -
      \Norm{\theta_Z}_{\mathcal{H}}^2
    }
    +
    \alpha \delta_h \tnorm{\theta_U}^2
    +
    \delta_h \tnorm{\theta_Z}_*^2
    \leq
    \frac{\alpha}{2} \Norm{\partial_t \rho_U}_{\mathcal{H}}^2
    +
    \frac{1}{2} \Norm{\partial_t \rho_Z}_{\mathcal{H}}^2,
  \end{equation}
and integrating from $t = 0$ to $t = T$ then leads to
\begin{equation}
  \begin{split}
    \frac{1}{2}
    \qp{\alpha
      \Norm{\theta_U(T)}_{\mathcal{H}}^2
      +
      \Norm{\theta_Z(0)}_{\mathcal{H}}^2
      }
    +
    \delta_h \qp{\int_0^T \alpha  \tnorm{\theta_U}^2
    +
    \tnorm{\theta_Z}^2 \d t}
    &=    
    \frac{1}{2}
    \qp{
      \int_0^T \alpha\Norm{\partial_t \rho_U}_{\mathcal{H}}^2
      +
      \Norm{\partial_t \rho_Z}_{\mathcal{H}}^2 \d t
    }.
\end{split}
\end{equation}
Corollary \ref{cor:best-approx} then implies
\begin{align*}
  \Norm{\partial_t \rho_U}_{\mathcal{H}}^2
  &\leq
  C h^{2\min(r-1, s-2)} \Norm{\partial_t u}_{\sobh{s}(\Omega)}^2,
  \\
  \Norm{\partial_t \rho_Z}_{\mathcal{H}}^2
  &\leq
  C h^{2\min(r-1, s-2)} \Norm{\partial_t z}_{\sobh{s}(\Omega)}^2.
\end{align*}
Finally, note that:
\begin{equation}
  \Norm{U(T) - u(T)}_{\mathcal{H}}
  +
  \Norm{Z(0) - z(0)}_{\mathcal{H}}  
  \leq
  \Norm{\theta_U(T)}_{\mathcal{H}}
  +
  \Norm{\rho_U(T)}_{\mathcal{H}}
  +
  \Norm{\theta_Z(0)}_{\mathcal{H}}
  +
  \Norm{\rho_Z(0)}_{\mathcal{H}},
\end{equation}
concluding the proof.

\end{proof}

\section{Numerical results}
\label{sec:numerics}

In this section we conduct numerical studies (implemented in the
finite element software FEniCS
\cite{AlnaesLoggOlgaardRognesWells:2014}) to validate the convergence
of the presented methods and to
examine their behaviour in physically-motivated test cases. Unless otherwise stated, the spatial domain is $\left(-1, 1\right)\times\left(-1, 1\right)$, the penalty parameter is taken as $C_\sigma = 10$, and we choose
\begin{equation}
    \vec M = \epsilon\begin{bmatrix} m^3 & m^2 \\ m^2 & m \end{bmatrix},
\end{equation}
with $m = 0.35$.

\begin{example}[Primal problem benchmark]
\label{ex:forward-problem-benchmark}
The first of our numerical experiments demonstrates the expected
convergence behaviour for the discretisation (\ref{eq:spatial-disc})
of the stationary forward problem. We consider the solution
$u\!\left(x\right) := sin^4\!\left(\pi x\right)sin^4\!\left(\pi
v\right)$, with appropriately chosen forcing such that
(\ref{eq:Kolmogorov}) holds over the spatial domain.

Figure \ref{fig:forward-problem-benchmark} shows solution errors for
polynomial degrees $r = 2, 3, 4$ measured in $\leb2(\W)$ and $\sobh
1(\W)$ for a sequence of uniformly-refined triangular meshes with mesh
parameter $h^{-1} = \left\{4, 8, 16, 32, 64\right\}$. The convergence
order agrees with the findings of Corollary \ref{cor:best-approx}.
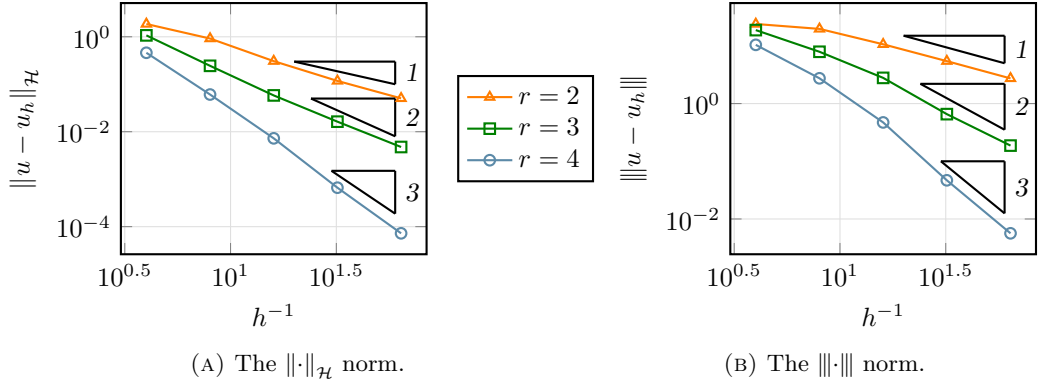
\begin{figure}[h]
  \centering
  \subcaptionbox{The $\Norm{\cdot}_{\mathcal{H}}$ norm.}
    {\begin{tikzpicture}
  \begin{axis}[
  scale = 0.75,
      %ymin = 1e-6,
      %ymax = 1e-1,
      cycle list/Dark2,
      thick,
      xmode=log,
      ymode=log,
      xlabel=$h^{-1}$,
      ylabel=$\Norm{u - u_h}_{\mathcal{H}}$,
      grid=both,
      minor grid style={gray!25},
      major grid style={gray!25},
      %no marks,
      legend style={at={(1.1, 0.5)},anchor=west},
    ]
    \addplot+[color=ptwo, mark = triangle]
    table[x expr={1.0/\thisrow{h}},y=u mathcal_H, col sep=comma]{data/primal_errors_p2.csv};
    \addlegendentry{$r=2$};
    
    \addplot+[color=pthree, mark = square]
    table[x expr={1.0/\thisrow{h}},y=u mathcal_H, col sep=comma]{data/primal_errors_p3.csv};
    \addlegendentry{$r=3$};
    
    \addplot+[color=pfour, mark = o]
    table[x expr={1.0/\thisrow{h}},y=u mathcal_H, col sep=comma]{data/primal_errors_p4.csv};
    \addlegendentry{$r=4$};
    
    % Convergence triangles
    \ConvergenceTriangle{20}{3e-1}{3}{1};
    \ConvergenceTriangle{24}{5e-2}{2.5}{2};
    \ConvergenceTriangle{30}{1.5e-3}{2}{3};
  \end{axis}
\end{tikzpicture}}
\subcaptionbox{The $\tnorm{\cdot}$ norm.}
    {\begin{tikzpicture}
  \begin{axis}[
  scale = 0.75,
      %ymin = 1e-6,
      %ymax = 1e-1,
      cycle list/Dark2,
      thick,
      xmode=log,
      ymode=log,
      xlabel=$h^{-1}$,
      ylabel=$\tnorm{u - u_h}$,
      grid=both,
      minor grid style={gray!25},
      major grid style={gray!25},
      %no marks,
      legend style={at={(1.0, 0.0)}, anchor=south east},
    ]
    \addplot+[color=ptwo, mark = triangle]
    table[x expr={1.0/\thisrow{h}},y=u triple_norm, col sep=comma]{data/primal_errors_p2.csv};
    
    \addplot+[color=pthree, mark = square]
    table[x expr={1.0/\thisrow{h}},y=u triple_norm, col sep=comma]{data/primal_errors_p3.csv};
    
    \addplot+[color=pfour, mark = o]
    table[x expr={1.0/\thisrow{h}},y=u triple_norm, col sep=comma]{data/primal_errors_p4.csv};
    
    % Convergence triangles
    \ConvergenceTriangle{20}{15}{3}{1};
    \ConvergenceTriangle{24}{2.2}{2.5}{2};
    \ConvergenceTriangle{30}{0.1}{2}{3};
  \end{axis}
\end{tikzpicture}}
    \caption{Plots of the primal problem solution errors for the
      benchmark tests in Example \ref{ex:forward-problem-benchmark},
      using polynomial degrees $r = 2, 3, 4$. Straight dotted lines
      are plotted, along with their gradients, to aid in demonstrating
      the order of convergence.}
    \label{fig:forward-problem-benchmark}
\end{figure}
\end{example}

\begin{example}[Optimal control problem benchmark]
\label{ex:optimal-control-benchmark}
In the second experiment we verify the convergence of the stationary
discrete optimal control problem (\ref{eq:discreteKKT}), with
regularisation parameter $\alpha = 1$. For this example we again
choose $u\!\left(x\right) := sin^4\!\left(\pi x\right)sin^4\!\left(\pi
v\right)$, and the corresponding exact dual solution $z$ and target
dose $\mathcal{D}$ are calculated accordingly. The spatial mesh is
refined as in the previous example, and the resulting errors, measured
in the $\mathcal{H}$ norm, are plotted in Figure
\ref{fig:optimal-control-benchmark}. Convergence rates agree with the
analysis in Corollary \ref{cor:best-approx-optimal-control}.
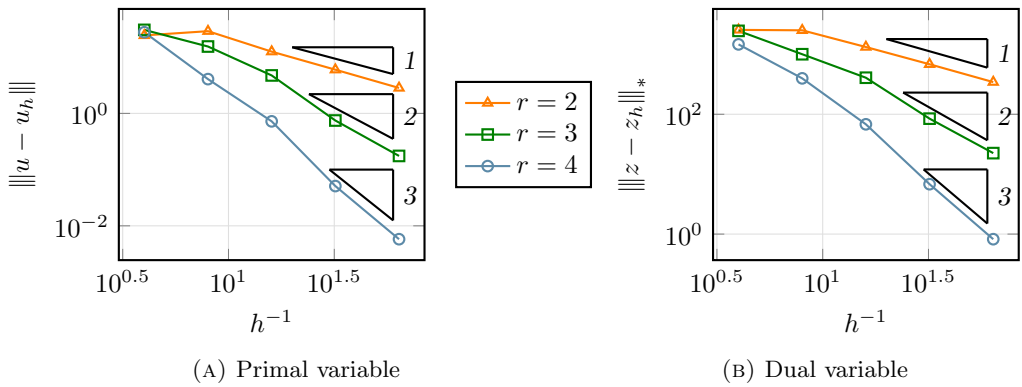
\begin{figure}[h]
\centering
\subcaptionbox{Primal variable}
    {\begin{tikzpicture}
  \begin{axis}[
  scale = 0.75,
      %ymin = 1e-6,
      %ymax = 1e-1,
      cycle list/Dark2,
      thick,
      xmode=log,
      ymode=log,
      xlabel=$h^{-1}$,
      ylabel=$\tnorm{u - u_h}$,
      grid=both,
      minor grid style={gray!25},
      major grid style={gray!25},
      %no marks,
      legend style={at={(1.1, 0.5)},anchor=west},
    ]
    \addplot+[color=ptwo, mark = triangle]
    table[x expr={1.0/\thisrow{h}},y=u triple_norm, col sep=comma]{data/optimal_control_errors_p2.csv};
    \addlegendentry{$r=2$};
    
    \addplot+[color=pthree, mark = square]
    table[x expr={1.0/\thisrow{h}},y=u triple_norm, col sep=comma]{data/optimal_control_errors_p3.csv};
    \addlegendentry{$r=3$};
    
    \addplot+[color=pfour, mark = o]
    table[x expr={1.0/\thisrow{h}},y=u triple_norm, col sep=comma]{data/optimal_control_errors_p4.csv};
    \addlegendentry{$r=4$};
    
    % Convergence triangles
    \ConvergenceTriangle{20}{15}{3}{1};
    \ConvergenceTriangle{24}{2.2}{2.5}{2};
    \ConvergenceTriangle{30}{0.1}{2}{3};
  \end{axis}
\end{tikzpicture}}
\subcaptionbox{Dual variable}
    {\begin{tikzpicture}
  \begin{axis}[
  scale = 0.75,
      %ymin = 1e-6,
      %ymax = 1e-1,
      cycle list/Dark2,
      thick,
      xmode=log,
      ymode=log,
      xlabel=$h^{-1}$,
      ylabel=$\tnorm{z - z_h}_*$,
      grid=both,
      minor grid style={gray!25},
      major grid style={gray!25},
      %no marks,
      legend style=none,
    ]
    \addplot+[color=ptwo, mark = triangle]
    table[x expr={1.0/\thisrow{h}},y=z triple_norm, col sep=comma]{data/optimal_control_errors_p2.csv};
    
    \addplot+[color=pthree, mark = square]
    table[x expr={1.0/\thisrow{h}},y=z triple_norm, col sep=comma]{data/optimal_control_errors_p3.csv};
    
    \addplot+[color=pfour, mark = o]
    table[x expr={1.0/\thisrow{h}},y=z triple_norm, col sep=comma]{data/optimal_control_errors_p4.csv};
    
    % Convergence triangles
    \ConvergenceTriangle{20}{1.8e3}{3}{1};
    \ConvergenceTriangle{24}{2.3e2}{2.5}{2};
    \ConvergenceTriangle{30}{1.2e1}{2}{3};
  \end{axis}
\end{tikzpicture}}
    \caption{Plots of the primal and dual problem solution errors,
      measured in the $\mathcal{H}$ norm, for the benchmark tests in
      Example \ref{ex:optimal-control-benchmark}, using polynomial
      degrees $r = 2, 3, 4$. Straight dotted lines are plotted, along
      with their gradients, to aid in demonstrating the order of
      convergence.}
    \label{fig:optimal-control-benchmark}
\end{figure}
\end{example}

\begin{example}[Regularisation parameter dependence]
\label{ex:alpha-dependence}
In the third example we consider the behaviour of the stationary
discrete optimal control problem (\ref{eq:discreteKKT}) in the context
of two physically-inspired test cases, with chosen target functions
shown in Figure \ref{fig:target-doses}. The first target function,
which is continuous but has discontinuous first derivative, is given
by
\begin{equation}
  \mathcal{D}_1 = \begin{cases} \sqrt{1 - \frac{x^2}{0.3^2} - \frac{v^2}{0.5^2}}, & 1 - \frac{x^2}{0.3^2} - \frac{v^2}{0.5^2} \geq 0, \\ 0, & \text{otherwise.} \end{cases}
\end{equation}
This is constructed such that it fits into the framework of the
analysis presented. The second target function, which is only
piecewise constant, does not fit into the analytical framework. It is
given by
\begin{equation}
  \mathcal{D}_2 = \begin{cases} 1, & x^2 + v^2 \leq \frac{1}{16}, \\ 0, & \text{otherwise.} \end{cases}
\end{equation}

The target functions could represent, for example, target radiation
doses in a particular energy range to be delivered to a tumour
surrounded by healthy tissue (where we want to minimise
irradiation). However, the formulation in question is overly
simplified for capturing the behaviours of such a complex problem and
should only be considered a toy model rather than an accurate
depiction of the underlying physics. 
\begin{figure}[h]
    \centering
    \subcaptionbox{\label{fig:target-dose-bump}
      A $\cont{0}(\W) \setminus \cont{1}(\W)$ target.
    }
   {\includegraphics[height=0.25\textwidth]{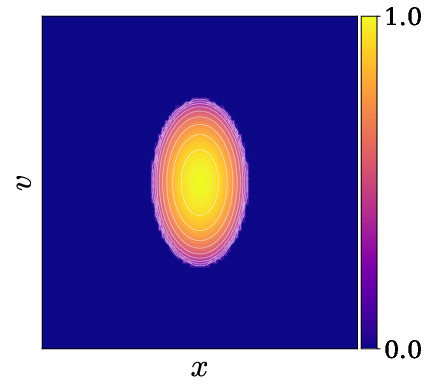}}
   \subcaptionbox{\label{fig:target-dose-circle}
     A discontinuous target.
   }
   {\includegraphics[height=0.25\textwidth]{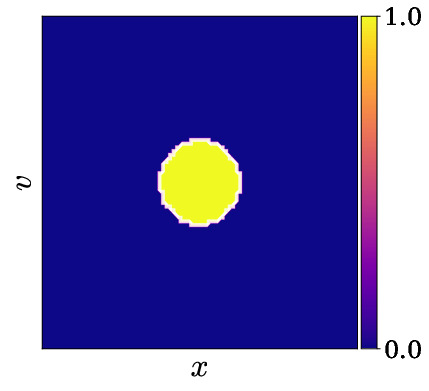}}
    \caption{Target function profiles for the physically-motivated
      experiments in Example \ref{ex:alpha-dependence}.}
    \label{fig:target-doses}
\end{figure}

Using degree-two polynomials on a uniform triangulation with mesh
parameter $h^{-1}=90$ (approximately $65,500$ degrees of freedom), the
discrete optimal control problem (\ref{eq:control-spatial-disc}) is
solved for various values of $\alpha$ in the range $\left[10^{-4},
  10^{-1}\right]$. Two regimes are considered, with relatively small
diffusion ($\epsilon = 10^{-4}$) and relatively large diffusion
($\epsilon = 10^{-1}$), and snapshots of the resulting solutions are
shown in Figures \ref{fig:alpha-dependence-bump-large-diffusion} --
\ref{fig:alpha-dependence-circle-small-diffusion}. The results
demonstrate the smoothing effect of the regularisation parameter
$\alpha$, with the primal variable approaching the target dose as
$\alpha$ is decreased. The effect is more pronounced in the large
diffusion regime (see Figure
\ref{fig:alpha-dependence-bump-large-diffusion} and Figure
\ref{fig:alpha-dependence-circle-large-diffusion}), and particularly
in the case of the piecewise constant target function. In the small
diffusion regime (see Figure
\ref{fig:alpha-dependence-bump-small-diffusion} and Figure
\ref{fig:alpha-dependence-circle-small-diffusion}) the primal solution
closely resembles the target function for moderate $\alpha$.

\begin{figure}[h]
    \centering
    \subcaptionbox{$\alpha = 10^{-1}$}
    {\begin{subfigure}{0.2\textwidth}
    \includegraphics[width=\textwidth]{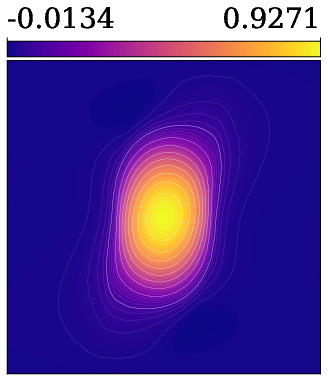}
    \includegraphics[width=\textwidth]{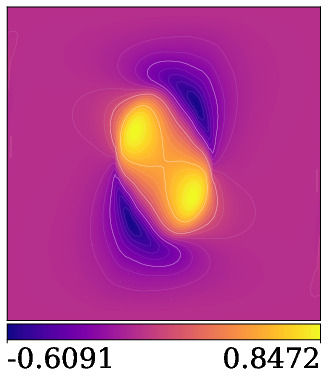}
    \end{subfigure}}
    \subcaptionbox{$\alpha = 10^{-2}$}
    {\begin{subfigure}{0.2\textwidth}
    \includegraphics[width=\textwidth]{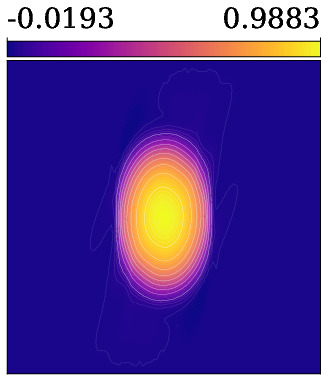}
    \includegraphics[width=\textwidth]{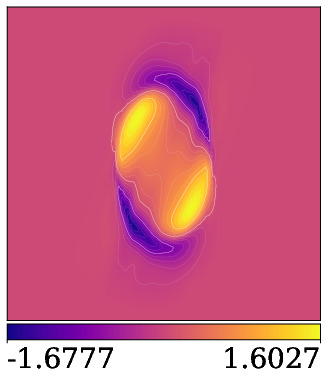}
    \end{subfigure}}
    \subcaptionbox{$\alpha = 10^{-3}$}
    {\begin{subfigure}{0.2\textwidth}
    \includegraphics[width=\textwidth]{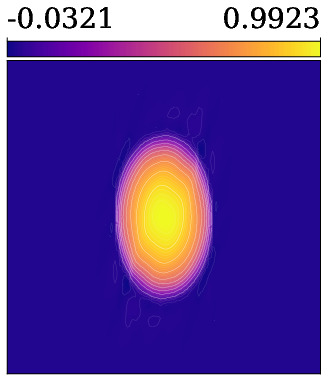}
    \includegraphics[width=\textwidth]{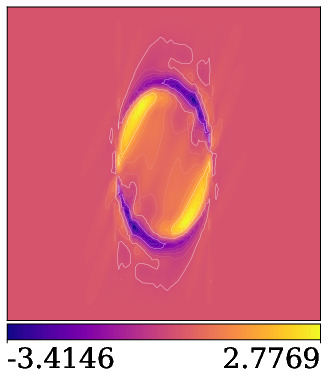}
    \end{subfigure}}
    \subcaptionbox{$\alpha = 10^{-4}$}
    {\begin{subfigure}{0.2\textwidth}
    \includegraphics[width=\textwidth]{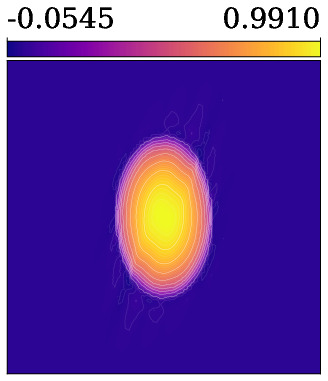}
    \includegraphics[width=\textwidth]{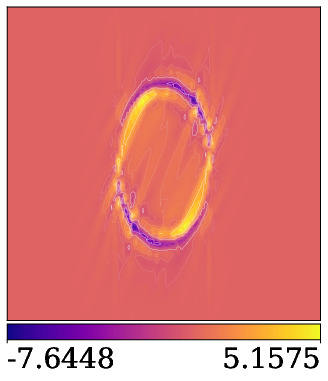}
    \end{subfigure}}
    \caption{Profiles of the primal (top) and control (bottom) variables for various values of $\alpha$ in the large diffusion regime ($\epsilon = 10^{-1}$), using the first target function $\mathcal{D}_1$ in Example \ref{ex:alpha-dependence}.} 
    \label{fig:alpha-dependence-bump-large-diffusion}
\end{figure}

\begin{figure}[h]
    \centering
    \subcaptionbox{$\alpha = 10^{-1}$}
    {\begin{subfigure}{0.2\textwidth}
    \includegraphics[width=\textwidth]{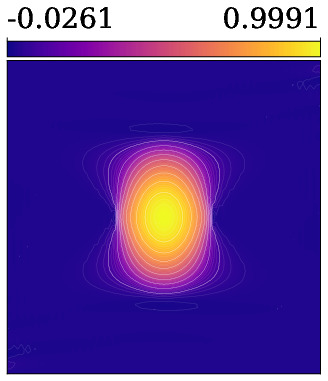}
    \includegraphics[width=\textwidth]{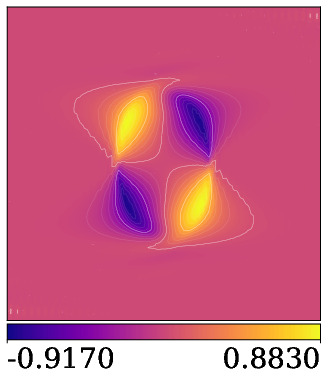}
    \end{subfigure}}
    \subcaptionbox{$\alpha = 10^{-2}$}
    {\begin{subfigure}{0.2\textwidth}
    \includegraphics[width=\textwidth]{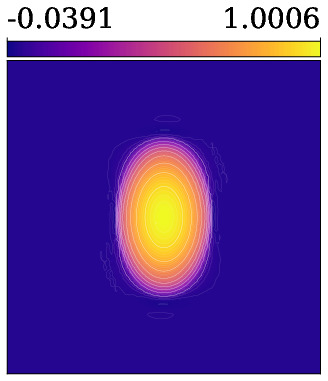}
    \includegraphics[width=\textwidth]{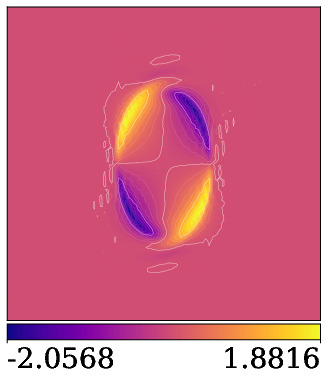}
    \end{subfigure}}
    \subcaptionbox{$\alpha = 10^{-3}$}
    {\begin{subfigure}{0.2\textwidth}
    \includegraphics[width=\textwidth]{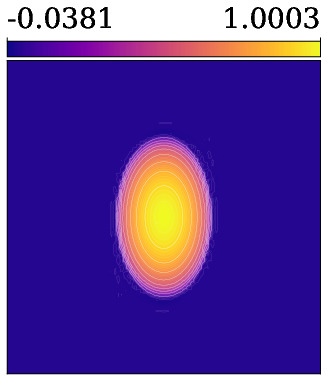}
    \includegraphics[width=\textwidth]{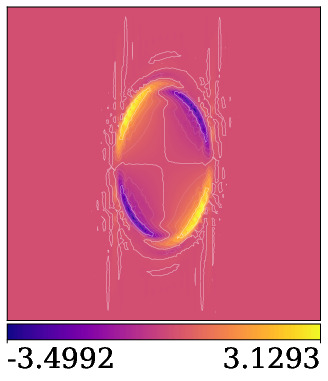}
    \end{subfigure}}
    \subcaptionbox{$\alpha = 10^{-4}$}
    {\begin{subfigure}{0.2\textwidth}
    \includegraphics[width=\textwidth]{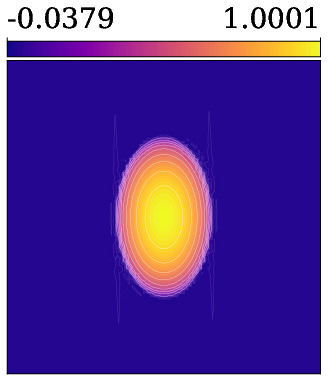}
    \includegraphics[width=\textwidth]{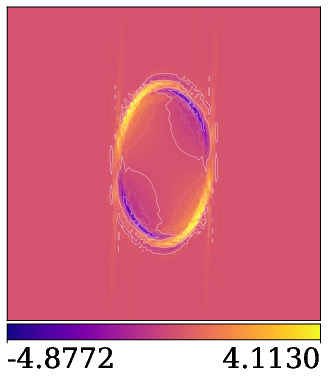}
    \end{subfigure}}
    \caption{Profiles of the primal (top) and control (bottom) variables for various values of $\alpha$ in the small diffusion regime ($\epsilon = 10^{-4}$), using the first target function $\mathcal{D}_1$ in Example \ref{ex:alpha-dependence}.} 
    \label{fig:alpha-dependence-bump-small-diffusion}
\end{figure}

\begin{figure}[h]
    \centering
    \subcaptionbox{$\alpha = 10^{-1}$}
    {\begin{subfigure}{0.2\textwidth}
    \includegraphics[width=\textwidth]{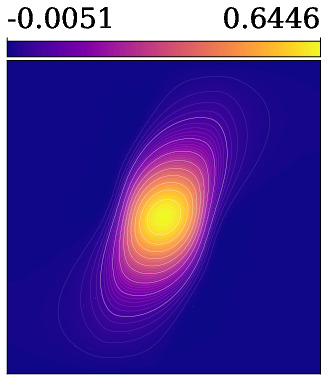}
    \includegraphics[width=\textwidth]{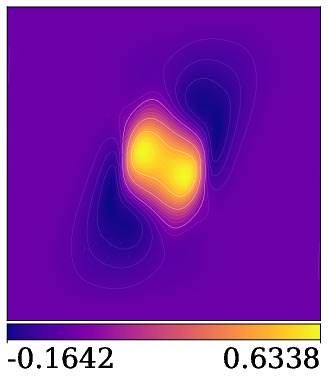}
    \end{subfigure}}
    \subcaptionbox{$\alpha = 10^{-2}$}
    {\begin{subfigure}{0.2\textwidth}
    \includegraphics[width=\textwidth]{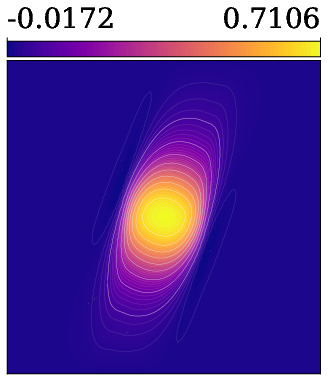}
    \includegraphics[width=\textwidth]{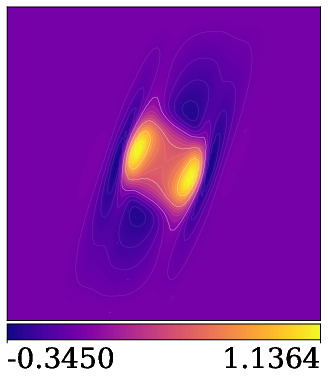}
    \end{subfigure}}
    \subcaptionbox{$\alpha = 10^{-3}$\label{fig:alpha-dep-circle-big-three}}
    {\begin{subfigure}{0.2\textwidth}
    \includegraphics[width=\textwidth]{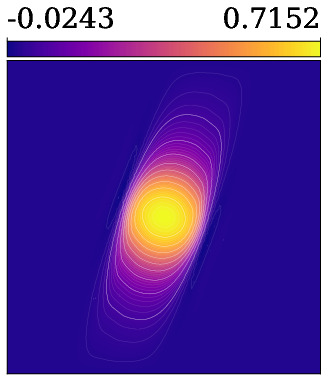}
    \includegraphics[width=\textwidth]{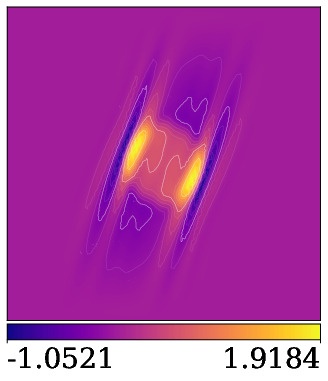}
    \end{subfigure}}
    \subcaptionbox{$\alpha = 10^{-4}$}
    {\begin{subfigure}{0.2\textwidth}
    \includegraphics[width=\textwidth]{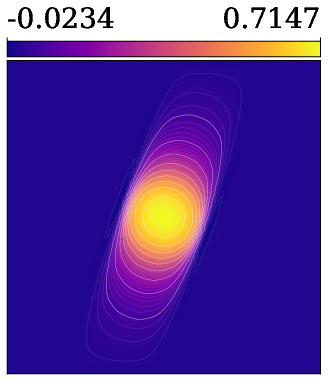}
    \includegraphics[width=\textwidth]{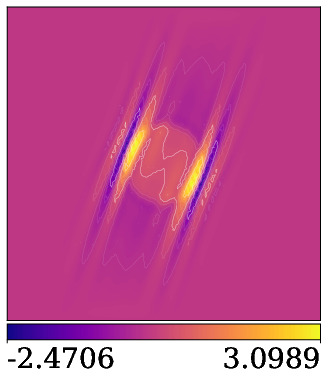}
    \end{subfigure}}
    \caption{Profiles of the primal (top) and control (bottom) variables for various values of $\alpha$ in the large diffusion regime ($\epsilon = 10^{-1}$), using the second target function $\mathcal{D}_2$ in Example \ref{ex:alpha-dependence}.
    } 
    \label{fig:alpha-dependence-circle-large-diffusion}
\end{figure}

\begin{figure}[h]
    \centering
    \subcaptionbox{$\alpha = 10^{-1}$}
    {\begin{subfigure}{0.2\textwidth}
    \includegraphics[width=\textwidth]{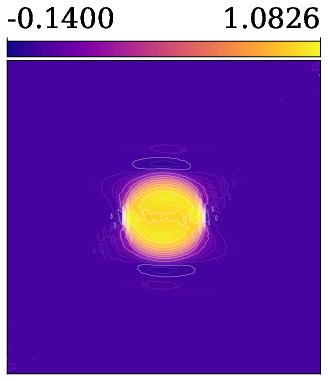}
    \includegraphics[width=\textwidth]{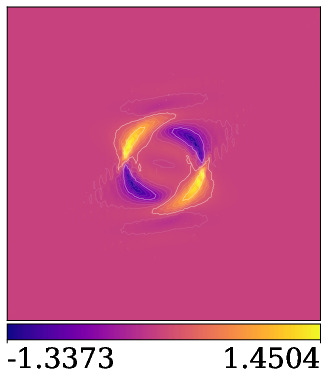}
    \end{subfigure}}
    \subcaptionbox{$\alpha = 10^{-2}$}
    {\begin{subfigure}{0.2\textwidth}
    \includegraphics[width=\textwidth]{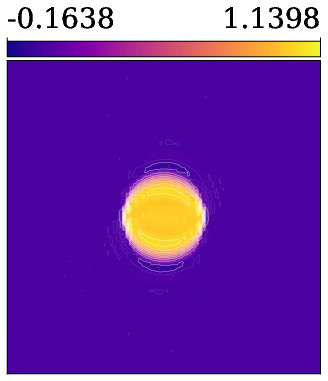}
    \includegraphics[width=\textwidth]{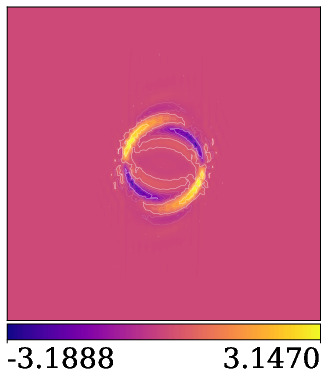}
    \end{subfigure}}
    \subcaptionbox{$\alpha = 10^{-3}$}
    {\begin{subfigure}{0.2\textwidth}
    \includegraphics[width=\textwidth]{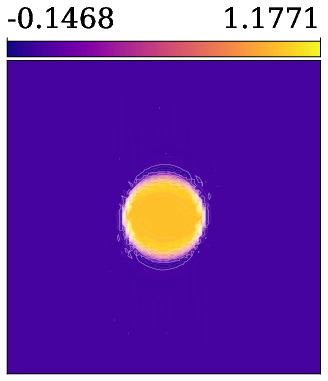}
    \includegraphics[width=\textwidth]{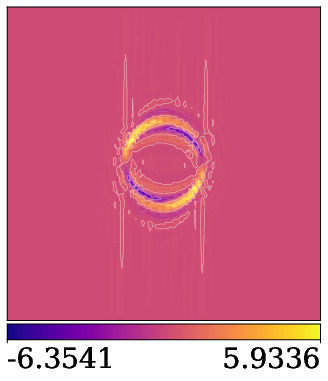}
    \end{subfigure}}
    \subcaptionbox{$\alpha = 10^{-4}$}
    {\begin{subfigure}{0.2\textwidth}
    \includegraphics[width=\textwidth]{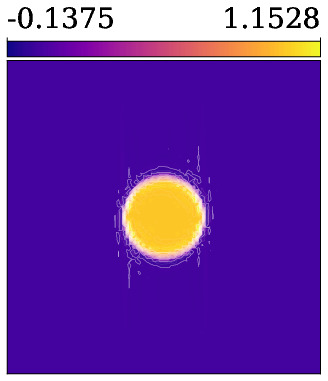}
    \includegraphics[width=\textwidth]{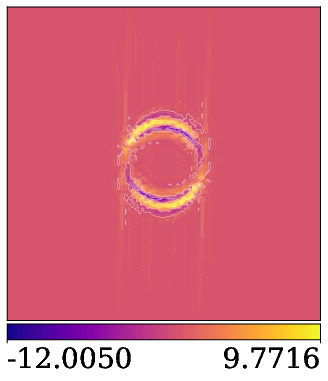}
    \end{subfigure}}
    \caption{Profiles of the primal (top) and control (bottom) variables for various values of $\alpha$ in the small diffusion regime ($\epsilon = 10^{-4}$), using the second target function $\mathcal{D}_2$ in Example \ref{ex:alpha-dependence}.
    } 
    \label{fig:alpha-dependence-circle-small-diffusion}
\end{figure}
\end{example}

\begin{example}[Choice of the matrix $\vec M$]
    \label{ex:choice-of-M}  
    The impact of the choice of the matrix $\vec M$ is examined. Using
    degree-two polynomials on the same mesh as in Example
    \ref{ex:alpha-dependence}, the stationary version of the discrete
    optimal control problem (\ref{eq:control-spatial-disc}) is solved
    for various values of $m$ in the range $\left[0, 10\right]$, with
    target function $\mathcal{D}_2$ in the large diffusion regime and
    the regularisation parameter set as $\alpha = 10^{-3}$. Snapshots
    of the solution are shown in Figure \ref{fig:M-dependence}, from
    which the non-isotropic diffusion introduced by the matrix $\vec
    M$ (the hypocoercive diffusion) can be clearly visualised. As $m$
    increases, the primal solution becomes increasingly diffuse, and
    the diffusion tends towards the $x$ direction. As $m$ approaches
    zero (the non-hypocoercive formulation) there is no additional
    diffusion introduced and the solution is closer to the target
    function. This potentially motivates the choice of mesh dependant
    $\vec M = \vec M(h)$.
    \begin{figure}[h]
    \centering
    \subcaptionbox{$m= 10^{-1}$}
    {\begin{subfigure}{0.2\textwidth}
    \includegraphics[width=\textwidth]{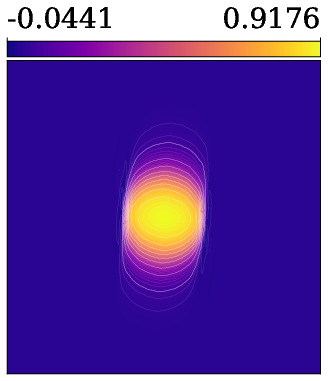}
    \includegraphics[width=\textwidth]{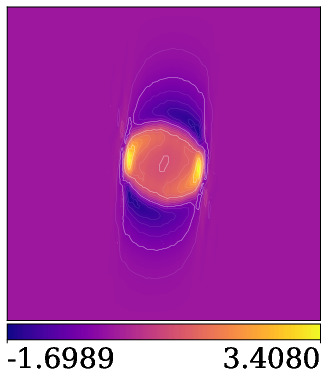}
    \end{subfigure}}
    \subcaptionbox{$m = 10^{-0.5}$}
    {\begin{subfigure}{0.2\textwidth}
    \includegraphics[width=\textwidth]{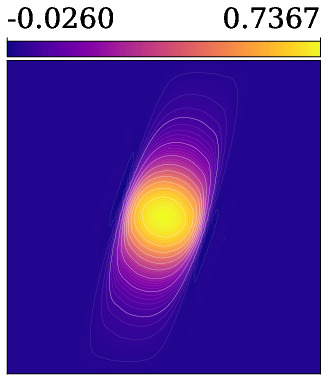}
    \includegraphics[width=\textwidth]{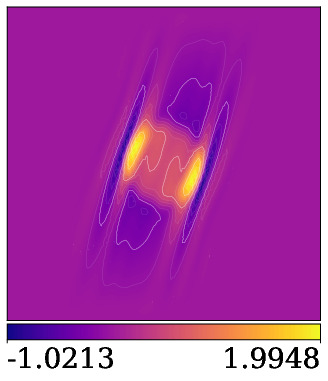}
    \end{subfigure}}
    \subcaptionbox{$m = 1$}
    {\begin{subfigure}{0.2\textwidth}
    \includegraphics[width=\textwidth]{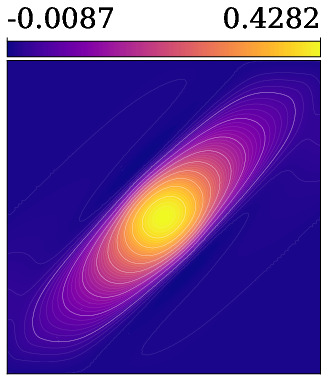}
    \includegraphics[width=\textwidth]{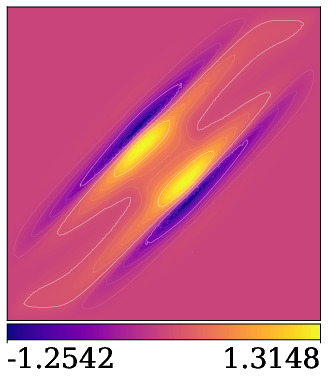}
    \end{subfigure}}
    \subcaptionbox{$m = 10^{0.5}$}
    {\begin{subfigure}{0.2\textwidth}
    \includegraphics[width=\textwidth]{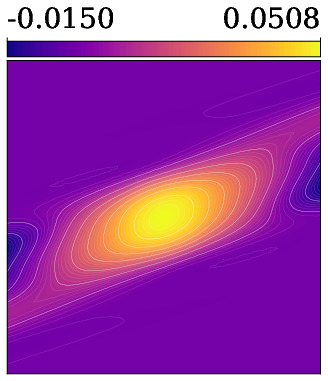}
    \includegraphics[width=\textwidth]{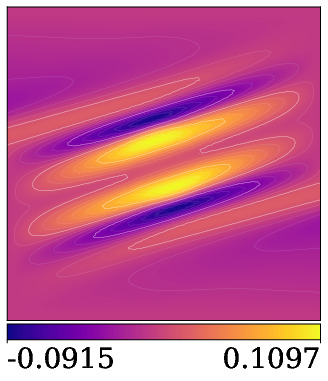}
    \end{subfigure}}
    \caption{Profiles of the primal (top) and control (bottom) variables for various values of $m$ in the large diffusion regime ($\epsilon = 10^{-1}$), using the target function $\mathcal{D}_2$ in Example \ref{ex:choice-of-M}.
    } 
    \label{fig:M-dependence}
\end{figure}
\end{example}

\begin{example}[Bound-satisfying optimal control]
  \label{ex:bound-satisfying}
  Results from the previous examples demonstrate that the control
  variable obtained by solving the discrete optimal control problem
  (\ref{eq:discreteKKT}) can take negative values. In applications
  such as radiotherapy this would correspond to removing radiation at
  that point, which is non-physical. Since the degree of the finite
  element space is $r > 1$, it is non-trivial to design a method
  consistent pointwise with such a constraint. In view of this we pose
  a modified method, based on the nodal (at the degrees of freedom)
  projection approach introduced in
  \cite{Barrenechea:2023,AmiriBarrenecheaPryer:2024}, to enforce that
  the control variable additionally satisfies imposed box constraints
  (more general bounds could also be considered).

\newcommand{\boundfes}{\fes_{\mathcal{B}}}
\newcommand{\iterstep}{\omega}
\newcommand{\itertol}{\operatorname{tol}}
\newcommand{\node}{\vec x_i}
\newcommand{\nodalproj}[1]{\mathcal{P}_{\mathcal{B}}\!\left( #1 \right)}

We denote the nodes of the finite element space $\fes$ by $\node$, for $i = 1,\ldots,N$, with $\phi_1,\ldots,\phi_N$ being the associated Lagrange basis functions. The nodally bound-satisfying admissible set $\boundfes\subseteq\fes$, given by
\begin{equation} \label{eq:boundfesdef}
    \boundfes := \left\{v\in\fes : v\!\left(\node\right)\in\left[0, \kappa\right]\text{, for all }i = 1,\ldots,N\right\},
\end{equation}
contains functions that lie between 0 and $\kappa$ at their degrees of freedom. Any finite element function can be projected into this set via the nodal projection operator $\mathcal{P}_{\mathcal{B}}: \fes\longrightarrow\boundfes$, which is defined, for all $v\in\fes$, by
\begin{equation}
    \nodalproj{v} := \sum^N_{i=1}\left(\max\left(0, \min\left(v\!\left(\node\right), \kappa\right)\right)\phi_i\right).
\end{equation}
Using this notation we pose the following Richardson-type iterative scheme, which enforces $F = \frac{1}{\alpha}\nodalproj{Z}$ in the ``steady-state" limit and decouples the system, allowing the primal and dual equations to be solved sequentially. Let $\iterstep\in\left(0, 1\right]$ and $n\in\mathbb{N}$. Given $\qp{U^{n-1}, Z^{n-1}}\in\fes^-_0\times\fes^+_0$, seek $\qp{U^n, Z^n}\in\fes^-_0\times\fes^+_0$, satisfying
\begin{align}
    a_h(U^n, \Phi) + s_h(U^n, \Phi) &= \frac{\iterstep}{\alpha}\ltwop{\nodalproj{Z^n}}{\Phi}_\mathcal{H} + \left(1 - \iterstep\right)\left(a_h(U^{n-1}, \Phi) + s_h(U^{n-1}, \Phi)\right)\Foreach\Phi\in\fes^-_0, \label{eq:iteration-primal} \\
    a_h^*(Z^n, \Psi) + s_h^*(Z^n, \Psi) &= \ltwop{\mathcal D - U^{n-1}}{\Psi}_\mathcal{H}\Foreach\Psi\in\fes^+_0. \label{eq:iteration-dual}
\end{align}

The experimental setup is taken to be the same as in Example \ref{ex:choice-of-M}, except we fix $m = 0.35$. Both single-sided and double-sided constraints are considered, replacing $v\!\left(\vec x_i\right)\in\left[0, \kappa\right]$ with $v\!\left(\vec x_i\right)\geq 0$ in (\ref{eq:boundfesdef}) in the former case and taking $\kappa = 1$ in the latter. The iterative scheme \eqref{eq:iteration-primal}--\eqref{eq:iteration-dual} is solved, with $\iterstep = 10^{-3}$, until $\Norm{Z^n - Z^{n-1}}^2 < \itertol := 10^{-10}$, to obtain the results depicted in Figure \ref{fig:bound-satisfying}. The resulting control variables respect the bounds (nodally) at the cost of the primal variables being further from the target function. Comparing to Figure \ref{fig:alpha-dep-circle-big-three}, the primal variables with the imposed control restrictions are more diffuse, featuring additional smearing in the direction of transport. This effect is furthered only slightly in the double-sided constraint case, however, the impact is expected to heavily depend on the choice of $\kappa$.

\begin{figure}[h]
    \centering
    \subcaptionbox{Single-sided constraint}
    {\begin{subfigure}{0.49\textwidth}
    \includegraphics[width=0.49\textwidth]{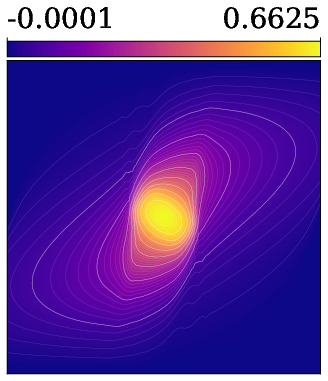}
    \includegraphics[width=0.49\textwidth]{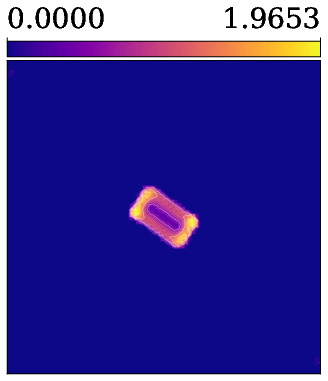}
    \end{subfigure}}
    \subcaptionbox{Double-sided constraint}
    {\begin{subfigure}{0.49\textwidth}
    \includegraphics[width=0.49\textwidth]{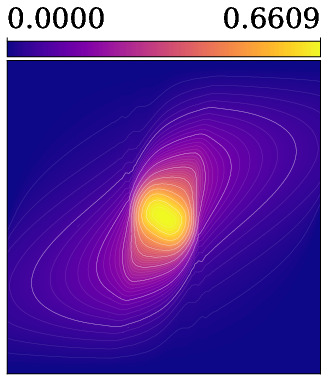}
    \includegraphics[width=0.49\textwidth]{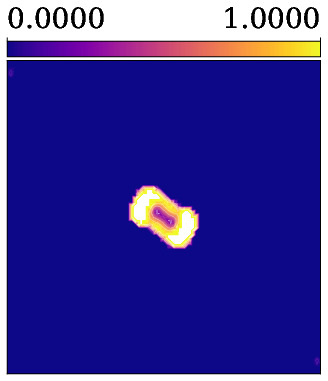}
    \end{subfigure}}
    \caption{Profiles of the primal variable (left in both subfigures), and control variable (right in both subfigures) for the single-sided ($F \geq 0$) and double-sided ($F\in\left[0, 1\right]$) control constraints in Example \ref{ex:bound-satisfying}.
    } 
    \label{fig:bound-satisfying}
\end{figure}

\end{example}

\begin{example}[Time-dependent optimal control]
\label{ex:space-time}

\newcommand{\energy}{\mathscr{E}}
\newcommand{\energyinit}{\energy_0}

In the final example, the time-dependent discrete optimal control problem (\ref{eq:control-spatial-disc}) is considered, with initial condition $u_0 = 0$ and target function
\begin{equation}
    \mathcal{D} = \qp{1 - \cos{2\pi t}}\exp\!\left(-25\left(x^2 + v^2\right)\right).
\end{equation}
Taking $\alpha = 10^{-2}$ and $\epsilon = 10^{-1}$, the solution is obtained using degree-two polynomials on a tetrahedral partition of the space-time domain $\left(-1, 1\right)^2\times\left(0, 1\right)$, with mesh size parameter $h^{-1} = 16$. The resulting primal and control variables are visualised in Figure \ref{fig:space-time-snapshots} as $xv$-plane slices at different values of $t$, and the qualitative resemblance of the primal variable to the target function is clear. There are small discrepancies, such as a lower maximum magnitude and a slight increase at the final time, but this is to be expected with the presence of diffusion. The results from this experiment demonstrate the potential of the time-dependent hypocoercive formulation in the optimal control setting. Box constraints could also be applied in a similar manner to the previous experiment to enforce physical bounds.
%and plotted in Figure \ref{fig:space-time-energy} are the quantities
% \begin{align}
%     \energy\!\left(t\right) & := \Norm{U\!\left(t\right)}_{\mathcal{H}}^2 + \Norm{Z\!\left(T - t\right)}_{\mathcal{H}}^2, \\
%     \energyinit\!\left(t\right) &:= e^{-\tilde{\delta} t}\Norm{U\!\left(0\right)}_{\mathcal{H}}^2,
% \end{align}
% which correspond with the left-hand side and right-hand side, respectively, of Theorem \ref{thm:control-spatial-stability}. A rapid decay from the initial condition towards the target function can be observed, and the numerics agree with the theoretical result with $\energy\!\left(t\right) \leq \energyinit\!\left(t\right)$.
\begin{figure}[h]
    \centering
    \subcaptionbox{$t = 0.125$}
    {\begin{subfigure}{0.2\textwidth}
    \includegraphics[width=\textwidth]{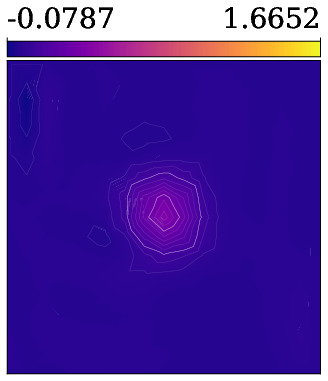}
    \includegraphics[width=\textwidth]{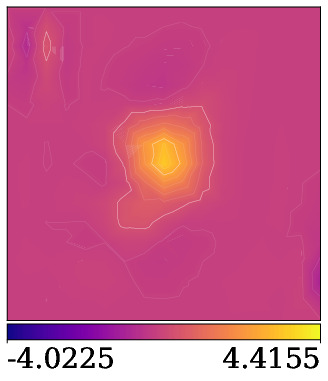}
    \end{subfigure}}
    \subcaptionbox{$t = 0.375$}
    {\begin{subfigure}{0.2\textwidth}
    \includegraphics[width=\textwidth]{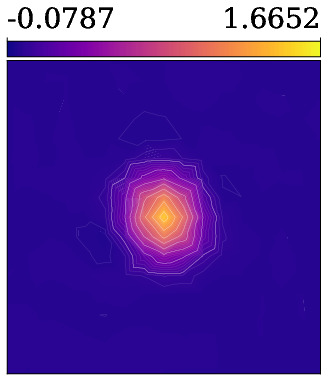}
    \includegraphics[width=\textwidth]{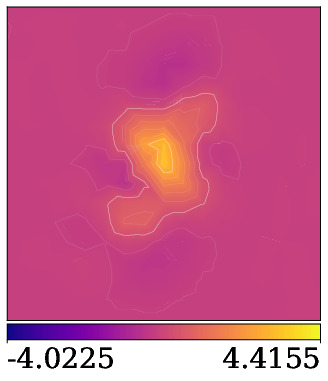}
    \end{subfigure}}
    \subcaptionbox{$t = 0.625$}
    {\begin{subfigure}{0.2\textwidth}
    \includegraphics[width=\textwidth]{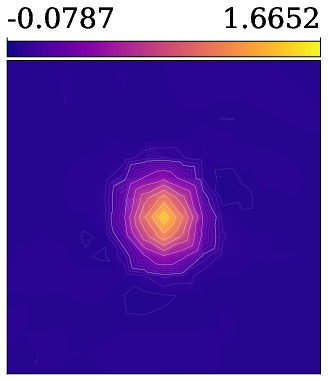}
    \includegraphics[width=\textwidth]{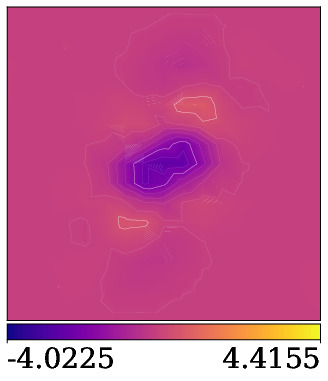}
    \end{subfigure}}
    \subcaptionbox{$t = 1$}
    {\begin{subfigure}{0.2\textwidth}
    \includegraphics[width=\textwidth]{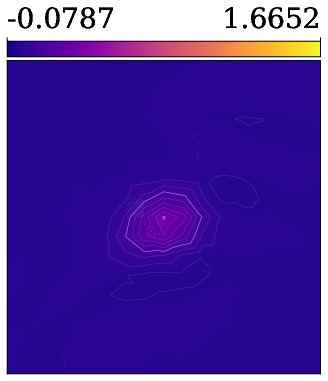}
    \includegraphics[width=\textwidth]{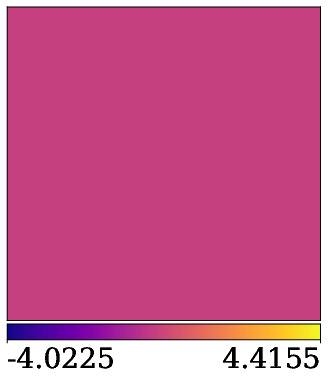}
    \end{subfigure}}
    \caption{Profiles of the primal variable (top) and control variable (bottom) at various values of $t$ in Example \ref{ex:space-time}. The colour scale is kept constant to demonstrate the evolution of the solution.
    } 
    \label{fig:space-time-snapshots}
\end{figure}

% \begin{figure}[h]
%    \centering
%    \input{space_time_energy}
%    \caption{Plots of the left-hand side $\mathscr{E}\!\left(t\right)$ and right-hand side $\mathscr{E}_0\!\left(t\right)$ of Theorem \ref{thm:control-spatial-stability} from Example \ref{ex:space-time}.}
%     \label{fig:space-time-energy}
% \end{figure}
\end{example}

\section{Concluding remarks}

In this work, we have introduced a hypocoercive formulation of the
Kolmogorov equation, around which an optimal control problem is
constructed. Due to the hypocoercive nature of the equation, we are
able to show long-time stability of the solutions, which decay in the
hypocoercive norm as the terminal time increases. A
hypocoercivity-preserving finite element method, introduced in
\cite{georgoulis2020hypocoercivitycompatible}, has been extended to
the optimal control setting, and stability in time has been
demonstrated.

Through numerical experiments, we observe that naive approaches to
optimal control of kinetic equations without stabilisation are highly
unstable, underscoring the role of regularisation in designing
numerical methods for such problems. The hypocoercive stabilisation
provides a robust analytic framework that is not always available with
other approaches. We have established stability and best approximation
results within this framework. However, from an approximation theory
perspective, these results are suboptimal by one order. This
limitation arises because the regularisation is not asymptotic; in
particular, the matrix $\vec M$ does not tend to zero as the mesh size
decreases. Investigating compatible regularisations that achieve
asymptotic optimality remains an important direction for future work.

Additionally, the methods developed here can be extended to optimal
control problems with more complex constraints, such as box
constraints, which we have explored in physically motivated numerical
test cases. Moving forward, a natural progression is to apply these
methods to more physically realistic models, such as the
Boltzmann-Fokker-Planck equation for kinetic systems.

\section*{Acknowledgements}
AP and TP received support from the EPSRC programme grant
EP/W026899/1. TP was also supported by the Leverhulme RPG-2021-238 and
EPSRC grant EP/X030067/1. AT is supported by a scholarship from the
EPSRC Centre for Doctoral Training in Advanced Automotive Propulsion
Systems (AAPS), under the project EP/S023364/1.

\printbibliography
\end{document}